\numberwithin{equation}{section}
\title{Finite volume methods for the computation of statistical solutions of the incompressible Euler equations}
\author{Carlos Par\'es-Pulido\footnote{\textit{carlos.pares-pulido@sam.math.ethz.ch}, R\"amistrasse 101, Z\"urich, Switzerland}, Seminar for Applied Mathematics, ETH Zurich}
\begin{document}

\maketitle

\begin{abstract}
{We present an efficient numerical scheme based on Monte Carlo integration to approximate statistical solutions of the incompressible Euler equations. The scheme is based on finite volume methods, which provide a more flexible framework than previously existing spectral methods for the computation of statistical solutions for incompressible flows. This finite volume scheme is rigorously proven to, under experimentally verifiable assumptions, converge in an appropriate topology and with increasing resolution to a statistical solution. The convergence obtained is stronger than that of measure-valued solutions, as it implies convergence of multi-point correlation marginals. We present results of numerical experiments which support the claim that the aforementioned assumptions are very natural, and appear to hold in practice.
}
\\
{\textbf{Keywords:} incompressible fluid dynamics; statistical solutions; partial differential equations.}
\end{abstract}

\section{Introduction}
The formal limit of the Navier--Stokes equations as the \emph{Reynolds number} tends to infinity produces the \emph{incompressible Euler equations}, classically written as
\begin{equation*}
\begin{cases}
\partial_t \bu + \bu \cdot \nabla \bu + \nabla p = 0,\\
\div( \bu) = 0, \\
\bu|_{t=0} = \barbu;
\end{cases}
\end{equation*}
where $\bu \in \R^d$ ($d \in \{2, 3\}$) is the fluid velocity, $p \in [0, \infty)$ is the pressure, and $\barbu$ is a known initial condition in some function space to be detailed. We will consider these equations defined on a finite time domain $[0, T]$ for $T>0$, and a spatial domain $D$ which we will always take to have periodic boundary conditions; i.e., $D = \T^d$. These equations are widely used to model turbulent flows, in which the Reynolds number is large.

\subsection{Existence and uniqueness of solutions}
The problem of existence and uniqueness of classical (i.e. differentiable) solutions of the equations of motion for incompressible fluids is, in a general setting, an open question; cf. \cite{FeffermanClay}. For the incompressible Euler equations, in two dimensions and for smooth initial data, it is known that classical solutions exist globally, \cite{Ladyzhenskaya69,BealeKatoMajda}. In three dimensions, however, the only existence results available are local in time, \cite{Lichtenstein25}. Furthermore, the study of non-differentiable initial data (e.g. $\barbu \in L^2(D)$) is often of interest. Therefore one often studies the \emph{weak formulation} of the incompressible Euler equations:

\begin{definition} \label{def:weaksol}
	A vector field $\bu \in L^\infty([0, T); L^2(D, \R^d))$ is a \emph{weak solution of the incompressible Euler equations} with initial datum $\barbu \in L^2(D; \R^d)$, if for all test vector fields $\phi \in C_c^\infty([0, T) \times D; \R^d)$ with $\div(\phi) = 0$, it holds that
	\begin{equation*}
	\int_0^T \int_{D} \left[\bu \cdot \partial_t \phi + (\bu \otimes \bu) \colon \nabla \phi \right]\, dxdt = - \int_{D} \barbu \cdot \phi(0,x) \,dx,
	\end{equation*}
	and for all test functions $\psi \in C^\infty(D)$,
	\begin{equation*}
	\int_{D} \bu \cdot \nabla \psi \, dx = 0.
	\end{equation*}
\end{definition}

In the equations above, $\otimes$ denotes the tensor product of vectors, and $\colon$ the inner product for matrices.

Existence of weak solutions, globally in time, is a classical result, \cite{Leray34}. Let us recall the concept of vorticity, $\bvort \coloneqq \curl(\bu) \coloneqq \nabla \times \bu$.
In two dimensions, global existence and uniqueness of solutions holds if the initial vorticity $\bar{\bvort} \coloneqq \curl(\barbu)$ verifies $\bar{\bvort} \in L^\infty(\R^2)$, \cite{Yudovich63}, or $\bar{\bvort} \in L^1(\R^2) \cap L^p(\R^2)$, with $1 < p < \infty$, \cite{DiPerna87}. Furthermore, global existence of solutions (without uniqueness) is known if the initial vorticity $\bar{\bvort} \in H^{-1}(\R^2)$ is of the form $\bar{\bvort} = \mu + g$, with $\mu \in \mathcal{M}_+$ a positive Radon measure, and $g \in L^p$, for $p \in [1, \infty]$, \cite{Delort91,Vecchi93}. In the torus, in two and three dimensions, if $\barbu \in L^2(\T^d)$, there exist global solutions (in fact, infinitely many) with bounded kinetic energy, \cite{WiedemannExistence}.

\cite{Chen2012} proved that under an assumption, informed by Kolmogorov turbulence theory, any vanishing-viscosity sequence of weak solutions of the Navier-Stokes equations in $\T^3$, up to a sub-sequence, converges strongly to a weak solution of the incompressible Euler equations. We remark that we will use in the sequel, cf. \eqref{hyp:scaling}, an assumption analogous to this. However, this result does not guarantee that the limit solution to incompressible Euler is unique.

The results above are representative of the difficulties of uniqueness for weak solutions. In fact, there exists a dense set in $L^2(D; \R^d)$ of so-called \emph{wild initial data}, each of which admits infinitely many weak solutions; \cite{DeLellisSz09,DeLellisSz2013}. The discontinuous shear layer, which we discuss in detail in Section \ref{ss:sls}, is known to belong to this set, \cite{Szekelyhidi11}. Even restricting Definition \ref{def:weaksol} to functions in the class of \emph{admissible weak solutions}, i.e. with non-increasing kinetic energy $\|\bu\|_{L^2}^2$, uniqueness in a general setting fails. 

We remark here the following recent result, due to \cite{LMP1}: a weak solution to the incompressible Euler equations is dubbed \emph{physically realizable} if it is the weak limit in $L^2$ of a vanishing viscosity sequence of solutions of the incompressible Navier--Stokes equations. In \emph{two spatial dimensions}, physically realizable solutions conserve kinetic energy if and only if they are the \emph{strong limit} in $L^p([0, T]; L^2(D; \R^d))$, $p \ge 1$, of some subsequence of the vanishing viscosity sequence. As wild initial data admit weak solutions which do not conserve energy, this suggests that the aforementioned class might not be physically relevant (in the sense of vanishing viscosity). In three dimensions, however, no analogous result is available, as there exist energy-dissipative solutions which are realizable as a strong vanishing viscosity limit of weak solutions to incompressible Navier--Stokes, \cite{Buckmaster2019}.

There exist many numerical schemes for the computation of approximate solutions of the incompressible equations of fluid dynamics; a survey can be found e.g. in \cite{Langtangen2002}. Of particular interest for this work is the finite volume scheme of \cite{BCG89}, based on a discrete Leray projection. Note that the lack of uniqueness discussed above hinders the application of numerics, regardless of the scheme. If no unique solution exists, numerical schemes may not produce a Cauchy sequence as the resolution increases; this is readily observable with the Bell--Colella--Glaz scheme for the Euler equations, as acknowledged in the original description of the method. Even for schemes that are provably convergent under sufficient assumptions of regularity, see e.g. \cite{MajdaBertozzi}, this convergence is very slow, limiting the practical usefulness of the results.

\subsection{Expanded solution spaces for the incompressible Euler equations}
The absence of well-posedness results for weak solutions in a general setting, as well as the difficulties for numerical approximations, leave the door open for defining alternative frameworks for solutions. One which has proven popular is that of \emph{measure-valued solutions}, originally presented in \cite{DiPerna85,DiPerna87}, in which the solution is searched for in the space of probability measures parameterized in space and time, i.e. \emph{Young measures}.

Measure-valued solutions of the incompressible Euler equations are known to exist globally, \cite{DiPerna87}, and there exist efficient algorithms to approximate them (in weak* topology), e.g. via Monte Carlo integration, e.g. \cite{LM15,Leonardi18}, or by C\'esaro-type averaging of approximations at different resolutions, e.g. \cite{Feireisl19}.

Furthermore, the following \emph{weak-strong uniqueness} property is known for measure-valued solutions, \cite{Brenier11}: for $d \in \{2, 3\}$, if the initial datum $\bu_0$ is such that a classical solution $\bu$ exists for all time, then there exists a unique measure-valued solution with $\nu_{t,x} = \delta_{\bu(t,x)}$. That is, the measure-valued solution takes, at point $(t,x)$ the value of $u(t,x)$ with probability one; this is termed an \emph{atomic measure}.

Measure-valued solutions have, however, poor properties of uniqueness. Even in the simpler case of one-dimensional conservation laws (cf. Example 9.1 in \cite{FMTActa}), it is easy to construct different measure-valued solutions for the same stochastic initial datum. Young measures are pointwise-parameterized probability measures, which can be thought of as marginals of the joint probability measure of the values of the function globally. It is clear that, without additional information about correlations, marginals alone cannot uniquely describe a joint measure.

As a remedy to this lack of uniqueness, the framework of \emph{statistical solutions} has been proposed. Within it, a solution is sought in the space of time-parameterized probability distributions in $L^p$. There is ample literature on the topic, e.g. \cite{Foias1973,Vishik1979,Foias2013}. In this work, we will follow the ideas and notation of \cite{FLMfoundations}, in which the authors proved that time-parameterized probability distributions in $L^p$ can be uniquely identified with a \emph{correlation measure}, an infinite hierarchy of Young measures corresponding to the marginal distributions for finite tuples of points. We will define this rigorously in the sequel.

There exist in the literature efficient algorithms for the computation of statistical solutions for hyperbolic conservation laws, both scalar and systems thereof, e.g. \cite{FMTActa,FLMWSystems,LyePhd}; as well as for the incompressible Euler equations with spectral hyper-viscosity schemes, \cite{LMP1}. However, to our knowledge, no such method is available for the incompressible Euler equations outside of spectral schemes; this poses strict limitations on the set of spatial domains numerical schemes can be applied to. Hence, the goals of this paper are (a) present the fundamental concepts about statistical solutions of the incompressible Euler equations; (b) derive a numerical scheme for the practical approximation of the same, based on finite volumes; and (c) rigorously prove that the approximations produced by this scheme, under experimentally verifiable assumptions, converge to a statistical solution.

The research presented here is based on the author's work in \cite{CPPThesis}.

\section{Statistical solutions of the incompressible Euler equations}
\subsection{Correlation measures}
Let us employ the following notation: for a finite, signed Radon measure $\mu \in \mathcal{M}(\R^d)$, and a continuous function $f \in C(\R^d)$, we denote
\[ \langle \mu, g(\xi) \rangle \coloneqq \int_{\R^d} g(\xi)\, d\mu(\xi).  \]
Throughout, we implicitly assume that $\xi \in \R^d$ is the argument of $g$, in order to write expressions like $\langle \mu, \xi \otimes \xi \rangle$ (rather than $\langle \mu, \text{id} \otimes \text{id} \rangle$). For a metrized set $\Omega$ with the implicit $\sigma$-algebra of Borel's sets, we denote by $\Prob(\Omega)$ the set of all probability distributions defined on $\Omega$.

\begin{definition}\label{def:Youngmeasure}
	A \emph{Young measure} is a map $\nu \colon \domain \to \Prob(\R^{k})$ which is measurable in the weak* topology; i.e., for any function $g \in C_0(\R^{k})$, the map 
	\[x \mapsto \langle \nu_x, g \rangle \]
	is Borel-measurable. We denote for convenience $\nu_x \coloneqq \nu(x)$.
	
\end{definition}

In the definitions below, we denote by $B_r(x) \subset \domain$ the ball of center $x$ and radius $r$ in Euclidean norm; and by $\fint$ the averaged integral, $\fint_Af(x) \,dx = \frac{1}{\L(A)}\int_A f(x)\,dx$, with $\L$ the Lebesgue measure.

\begin{definition}\label{def:corrmeas}
	(\cite{FLMfoundations}) 
	\begin{sloppypar}
	A \emph{correlation measure} is a collection ${\bm \nu} = (\nu^1, \nu^2, \dots)$ of maps, with ${\nu^k \colon \domain^k \to \Prob\left(\Uk\right)}$ satisfying the following properties:
	\end{sloppypar}
	\begin{enumerate}
		\item\textit{Weak* measurability:} For all $k \in \N$, the map $\nu^k \colon \domain^k \to \Prob\left(\Uk\right)$ is weak* measurable; i.e. $\nu^k$ is a \emph{Young measure} from $\domain^k$ to $\Uk$.
		\item\textit{$L^p$-boundedness:} there exists $p \in [1, \infty)$ such that $\nu^1$ is $L^p$-bounded, in the sense that
		\begin{equation*}
		\int_{\domain} \langle \nu^1_x , \|\xi\|^p\rangle\,dx < \infty.
		\end{equation*}
		\item\textit{Symmetry:} If $\sigma$ is a permutation of $\{1,\dots,k\}$ and $f\in C_0(\R^k)$ then $\langle \nu^k_{\sigma(\bx)}, f(\sigma(\bm{\xi}))\rangle = \langle \nu^k_{\bx} , f(\bm{\xi}) \rangle$ for a.e.\ $\bx\in \domain^k$. Here, we denote $\sigma(\bx) = \sigma(x_1,x_2,\ldots, x_k) = (x_{\sigma_1},x_{\sigma_2},\ldots,x_{\sigma_k})$.
		\item\textit{Consistency:} If $f\in C_0\left(\Uk\right)$ is of the form $f(\xi_1,\dots,\xi_k) = g(\xi_1,\dots,\xi_{k-1})$ for some $g\in C_0(\Ukmone)$, then $\langle \nu^k_{x_1,\dots,x_k} , f \rangle = \langle \nu^{k-1}_{x_1,\dots,x_{k-1}}, g\rangle$ for almost every $(x_1,\dots,x_k)\in \domain^k$.
		\item\textit{Diagonal continuity:} 
		\begin{equation*}
		\lim_{r\to0}\int_\domain\fint_{B_r(x)}\langle \nu^2_{x,y}, \|\xi_1-\xi_2\|^2\rangle\,dy\,dx = 0.
		\end{equation*}
	\end{enumerate}
	Each element $\nu^k$ is called a \emph{correlation marginal}. We let $\L^p(\domain;\U)$ denote the \emph{set of all correlation measures} from $\domain$ to $\U$,
	\[ \L^p(\domain;\U) \coloneqq \left\{{\bm{\nu}} = (\nu^1, \nu^2, \dots),~\nu^k \colon \domain^k \to \Prob\left(\Uk\right), \bm{\nu} \text{ is a corr. meas., } \int_D \langle \nu^1_x , \|\xi\|^p\rangle\,dx < \infty \right\} \]
\end{definition}

It is known, \cite{FLMfoundations}, that correlation measures and probability measures in $L^p$ can be uniquely identified, through the following \emph{main theorem of correlation measures}:
\begin{theorem}
	\label{thm:duality}
	(\cite{FLMfoundations}, Theorem 2.7) 
	\begin{sloppypar}
		For every correlation measure $\bnu \in \L^p(\domain;\U)$, there exists a unique probability measure ${\mu \in \Prob(L^p(\domain, \U))}$ satisfying
		\begin{equation}
		\label{eq:finitemoment}\int_{L^p(\domain;\U)} \|\bu\|^p_{L^p(\domain;\U)} \,d\mu(\bu) < \infty,
		\end{equation}
		such that $\forall k \in \N$, and $\forall g \in L^1\left(\domain^k; C_0\left(\Uk\right)\right)$,
	\end{sloppypar}
	\begin{equation}
	\label{eq:duality}
	\int_{\domain^k} \int_{(\U)^k} g(\bx, \bm{\xi}) d\nu_x^k(\bm{\xi}) \,d\bx = \int_{L^p(\domain, \U)} \int_{\domain^k} g(\bx, \bu(\bx)) \,d\bx d\mu(\bu), 
	\end{equation}
	where $\bu(\bx)$ denotes the vector $(\bu(x_1), \bu(x_2), \dots, \bu(x_k))$, and $g(\bx, \bm{\xi}) \coloneqq g(\bx)(\bm{\xi})$. Conversely, for every probability measure $\mu \in \Prob(L^p(\domain,\U))$ with finite moment \eqref{eq:finitemoment}, there exists a unique correlation measure $\bnu \in \L^p(\domain; \U)$ satisfying \eqref{eq:duality}.
	
	Moreover, the (tensor-valued) moments 
	\[ m^k(\bx) \coloneqq \langle \nu_\bx^k, \,\xi_1 \otimes \xi_2 \otimes \dots \otimes \xi_k \rangle \]
	uniquely determine the correlation measure $\bnu$, and thus its associated $\mu$.
\end{theorem}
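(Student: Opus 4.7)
The plan is to establish the two directions of the correspondence separately, and then to extract the moment characterization from a density argument. The forward direction (from $\mu$ to $\bnu$) is essentially a disintegration; the reverse direction (from $\bnu$ to $\mu$) is a Kolmogorov-type extension and is where the principal technical difficulty lies.

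For the forward direction, given $\mu\in\Prob(L^p(\domain;\U))$ satisfying \eqref{eq:finitemoment}, I would define for each $k$ the positive linear functional on $C_c(\domain^k\times\U^k)$
\[ g \mapsto \int_{L^p(\domain;\U)} \int_{\domain^k} g(\bx, \bu(\bx))\, d\bx\, d\mu(\bu). \]
By the Riesz representation theorem this is realized by a Radon measure on $\domain^k\times\U^k$ whose marginal on $\domain^k$ is Lebesgue, so disintegration produces a weakly* measurable family $\bx\mapsto\nu_\bx^k$. I would then verify the five axioms of Definition \ref{def:corrmeas} directly: $L^p$-boundedness of $\nu^1$ is \eqref{eq:finitemoment} after Fubini; symmetry and consistency hold because the evaluation $\bu\mapsto\bu(\bx)$ is $\sigma$-equivariant and compatible with coordinate projections; and diagonal continuity reduces, via the expression for $\nu^2$, to the mean-square continuity of translations of $L^p$ functions, with a dominated-convergence step justified by the uniform $p$-th moment bound coming from $\mu$.

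For the reverse direction, symmetry and consistency make $\{\nu_\bx^k\}_{k,\bx}$ a consistent projective family of finite-dimensional distributions indexed by $\domain$, and Kolmogorov's extension theorem then produces a probability measure $\bar\mu$ on $\U^\domain$ (with its product $\sigma$-algebra) whose $k$-fold evaluation marginals agree with $\nu_\bx^k$ for a.e.\ $\bx$. The delicate step is to descend from $\U^\domain$ to $L^p(\domain;\U)$ with its Borel structure. $L^p$-boundedness combined with Fubini gives that $\int_\domain\|\bu(x)\|^p\, dx<\infty$ for $\bar\mu$-a.e.\ $\bu$, and diagonal continuity supplies a measurable representative: concretely, the local averages $\bu_r(x)\coloneqq\fint_{B_r(x)}\bu(y)\,dy$ are Cauchy in $L^2$ with respect to $\bar\mu\otimes dx$ as $r\to 0$, so their limit yields a jointly measurable modification of $\bu$. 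The pushforward of $\bar\mu$ along this modification defines the candidate $\mu$, and \eqref{eq:duality} is verified first on product test functions $g(\bx)\prod_i g_i(\xi_i)$ and then extended by a monotone-class argument.

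For uniqueness in each direction, and for the final moment assertion, I would invoke a Stone--Weierstrass argument: polynomials in $\xi$ are dense in $C(K)$ on every compact $K\subset\U^k$, and the $L^p$-bound controls tails, so the functional $g\mapsto\langle\nu_\bx^k, g\rangle$ on $C_0(\U^k)$ is uniquely determined by the tensor moments $m^j(\bx_I)$ for $j\le k$ and sub-tuples $\bx_I$; together with the duality established in the first two steps this forces both $\bnu$ and $\mu$ to coincide across any two candidates sharing the same moments. The main obstacle I anticipate is the descent step from $\U^\domain$ to $L^p(\domain;\U)$, since the product $\sigma$-algebra is strictly coarser than the Borel $\sigma$-algebra of $L^p$ and pointwise evaluation is not a Borel map on the latter; the local-averaging trick, enabled precisely by the diagonal-continuity axiom, is what makes this step tractable and accounts for the otherwise-mysterious role of axiom (5) in Definition \ref{def:corrmeas}.
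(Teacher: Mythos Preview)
The paper does not prove this theorem: it is quoted verbatim as Theorem~2.7 of \cite{FLMfoundations} and used as a black box, so there is no in-paper argument to compare your proposal against. Your outline is, in broad strokes, the strategy one finds in that reference---disintegration for $\mu\Rightarrow\bnu$, a Kolmogorov-type extension for $\bnu\Rightarrow\mu$, and a density argument for the moment characterization---and your identification of the descent from $\U^\domain$ to $L^p(\domain;\U)$ via local averaging (enabled by diagonal continuity) as the crux is exactly right.

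One genuine technical point deserves more care than your sketch gives it. The correlation marginals $\nu^k_{\bx}$ are defined only for \emph{a.e.}\ $\bx\in\domain^k$, not for every $\bx$, so you do not literally have a projective family indexed by the uncountable set $\domain$ to which Kolmogorov's theorem applies directly; you must first select a full-measure set on which consistency and symmetry hold simultaneously for all $k$, or work instead through the dual characterization of $\mu$ via its action on cylinder functionals built from the moments. Similarly, in the moment-determinacy step, polynomials in $\xi$ do not lie in $C_0(\U^k)$, so Stone--Weierstrass on compacta must be paired with a tightness argument (the $L^p$ bound gives uniform integrability of $\|\xi\|^p$ under $\nu^1$, hence tightness of each $\nu^k$); your sketch gestures at this but does not make the truncation explicit.
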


\subsection{Time-parameterized probability measures in function spaces}
In this section we introduce the mathematical space in which we will seek solutions of the incompressible Euler equations: \emph{time-parameterized correlation measures}; or equivalently, by Theorem \ref{thm:duality}, time-parameterized probability measures in $L^2$.

Fix $T \in [0, \infty)$. We denote $L^p_x \coloneqq L^p(\domain; \mathbb{R}^d)$; and $L^q_t(\mathcal{X}) \coloneqq L^q([0, T], \mathcal{X})$, for $\mathcal{X}$ a Banach space.

\begin{definition} (\cite{LMP1})
	\begin{enumerate}
		\item Consider a time-parameterized probability measure $\mu\colon [0, T) \to \Prob(L^2_x)$, and denote $\mu_t \coloneqq \mu(t)$. If for all $F \in C_b(L^2_x)$ (i.e. continuous and bounded), the mapping \[t \mapsto \int_{L^2_x} F(\bu) \, d\mu_t(\bu)\] is measurable for a.e. $t \in [0, T)$, we say that $\mu$ is \emph{weak* measurable}.
		
		\item We denote by $L^1_t(\Prob)$ the space of weak* measurable, time-parameterized probability measures such that
		\[ \int_0^T \int_{L^2_x} \|\bu\|_{L^2_x}\,d\mu_t(\bu) dt < \infty. \]
		
		\item\label{subdef:timereg} A weak* measurable, time-parameterized probability measure $\mu$ is called \emph{time-regular} if there exist a constant $L\in \N$ and a mapping $(s,t) \mapsto \pi_{s,t} \in \Prob(L^2_x \times L^2_x)$ such that for a.e. $s,t \in [0, T)$: \begin{itemize}
			\item The measure $\pi_{s,t}$ is a transport plan from $\mu_s$ to $\mu_t$. That is: for all measurable $A\subset L^2_x$, it holds that $\pi_{s,t}(A \times L^2_x) = \mu_s(A)$ and $\pi_{s,t}(L^2_x \times A) = \mu_t(A)$.
			\item There exists a constant $C>0$ such that
			\begin{equation}
			\label{eq:timeregulartransport} \int_{L^2_x \times L^2_x} \|\bu-\bv\|_{H_x^{-L}} d\pi_{s,t}(\bu,\bv) \le C|t-s|.
			\end{equation}
		\end{itemize}
		
		\item A family of time-parameterized, time-regular probability measures $\{\mu^\Delta_t\}_{\Delta > 0}$ is \emph{uniformly time-regular} if the constants $L$ and $C$ in point \ref{subdef:timereg} can be chosen independently of $\Delta$.
	\end{enumerate}
\end{definition}

We conclude this subsection with the following two results, which will be of use in the sequel.

\begin{lemma}(\cite{LMP1}, Proposition 2.2) 
	\label{lemma:limittimecontinuous}
	Let $\mu_t^\Delta \in L^1_t(\Prob)$ be a family of uniformly time-regular probability measures, for $\Delta > 0$. Assume there exists $R>0$ such that $\mu_t^\Delta(B_R(0)) = 1$ for all $\Delta > 0$ and a.e. $t \in [0, T)$, and $B_R(0)$ the ball of radius $R$ and center $0$ in $L^2$. If there exists $\mu_t \in L^1_t(\Prob)$ such that $\mu^\Delta_t \to \mu_t$, i.e.,
	\[ \lim_{\Delta \to 0^+} \int_0^T W_1(\mu^\Delta_t, \mu_t) \,dt = 0,\]
	then $\mu_t$ is time-regular, with the same time-regularity constants $C, L$ as the family $\mu_t^\Delta$.
\end{lemma}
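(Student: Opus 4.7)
The plan is to construct $\pi_{s,t}$ as a subsequential weak limit of the transport plans $\pi^{\Delta}_{s,t}$ that the uniform time-regularity of $\{\mu^\Delta_t\}$ provides, and then to pass the cost bound to the limit. First, from $\int_0^T W_1(\mu^\Delta_t,\mu_t)\,dt\to 0$ I would extract a subsequence $\Delta_n\downarrow 0$ along which $W_1(\mu^{\Delta_n}_t,\mu_t)\to 0$ for a.e.\ $t\in[0,T)$; let $E\subset[0,T)$ denote the resulting full-measure set. The portmanteau theorem applied to the closed ball $B_R(0)\subset L^2_x$ yields $\mu_t(B_R(0))=1$ for every $t\in E$, so the limit measure itself is concentrated on $B_R(0)$.

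Next, I would invoke compactness. Since $\T^d$ is compact, the embedding $L^2_x\hookrightarrow H_x^{-L}$ is compact, so $B_R(0)\subset L^2_x$ is compact in the $H_x^{-L}$-topology. The plans $\pi^{\Delta_n}_{s,t}$ are all supported in the fixed compact set $B_R(0)\times B_R(0)$ and hence form a tight family in $\Prob(H_x^{-L}\times H_x^{-L})$. For each fixed $(s,t)\in E\times E$, Prokhorov's theorem supplies a further subsequence $\pi^{\Delta_{n_k}}_{s,t}\rightharpoonup\pi_{s,t}$ weakly, whose support lies in $B_R(0)\times B_R(0)\subset L^2_x\times L^2_x$.

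I would then verify the two defining properties of time-regularity. The marginals of $\pi^{\Delta_{n_k}}_{s,t}$ are $\mu^{\Delta_{n_k}}_s$ and $\mu^{\Delta_{n_k}}_t$; since coordinate projections are continuous and $W_1$-convergence in $L^2_x$ implies weak convergence (and \emph{a fortiori} weak convergence in the coarser $H_x^{-L}$-topology), the marginals of $\pi_{s,t}$ are exactly $\mu_s$ and $\mu_t$, so $\pi_{s,t}$ is a valid transport plan. For the cost bound, the map $(\bu,\bv)\mapsto\|\bu-\bv\|_{H_x^{-L}}$ is continuous on $H_x^{-L}\times H_x^{-L}$ and bounded on the common support, so weak convergence of measures gives
\[
\int_{L^2_x\times L^2_x}\|\bu-\bv\|_{H_x^{-L}}\,d\pi_{s,t}(\bu,\bv)=\lim_{k\to\infty}\int\|\bu-\bv\|_{H_x^{-L}}\,d\pi^{\Delta_{n_k}}_{s,t}\le C|t-s|,
\]
with the same constants $L$ and $C$ as in the hypothesis.

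The main technical obstacle, in my view, is the mismatch of topologies: compactness of the transport plans requires the weaker $H_x^{-L}$-topology, whereas the marginals and the $W_1$-convergence are naturally posed in $L^2_x$. The uniform support hypothesis $\mu^\Delta_t(B_R(0))=1$ is precisely what reconciles the two, since on $B_R(0)$ every observable we need (the coordinate projections and $\|\cdot\|_{H_x^{-L}}$) is both continuous and bounded. A minor side point is that the Prokhorov subsequence may depend on the pair $(s,t)$, but since the definition of time-regularity places no joint measurability requirement on $(s,t)\mapsto\pi_{s,t}$, this causes no difficulty.
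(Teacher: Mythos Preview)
The paper does not actually prove this lemma; it is merely quoted from \cite{LMP1} (Proposition 2.2) and stated without proof, so there is no ``paper's own proof'' to compare against. That said, your argument is correct and is precisely the standard route one would expect: extract an a.e.\ subsequence from the $L^1_t$-convergence, use the compact embedding $L^2_x\hookrightarrow H_x^{-L}$ on the torus to obtain tightness of the plans on the fixed compact set $B_R(0)\times B_R(0)$, apply Prokhorov, identify the marginals via weak convergence, and pass the continuous bounded cost $\|\cdot\|_{H_x^{-L}}$ through the limit. Your discussion of the topology mismatch and of the $(s,t)$-dependence of the Prokhorov subsequence is accurate and shows you have identified the only genuine subtleties.
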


We recall that for a Banach space $X$, and $\rho, \sigma \in \Prob(X)$, the $1$-Wasserstein metric $W_1$ is defined as
\[ 	\label{eq:wassersteindef} W_1(\rho, \sigma) \coloneqq \inf \int_{X \times X} \|\xi - \zeta\| \,d\pi(\xi, \zeta), \]
where the infimum is taken over all transport plans $\pi$ between $\rho$ and $\sigma$.

\begin{theorem}
	\label{thm:compactness}
	(\cite{LMP1}, Theorem 2.4) 
		Let $\{\mu^h_t\}_{h > 0}$ be a family of uniformly time-regular, weak* measurable probability measures in ${L^1_t(\Prob)}$, and assume that there exists $R>0$ such that 
	\begin{equation*}
	\mu_t^h(B_R(0)) = 1,\qquad \forall h > 0, \forall t \in [0, T).
	\end{equation*}
	
	Let $\bnu^h_t = (\nu^{h, 1}_t, \nu^{h, 2}_t, \dots)$ denote the corresponding time-parameterized correlation measures. If there exists a map $\Upsilon \colon [0, \infty) \to [0, \infty)$ with $\lim\limits_{r\to0^+}\Upsilon(r) = 0$ such that for all $h > 0$,
	\begin{equation*}
	 S_r^2(\bnu^h, T)^2 \coloneqq \int_0^T \int_D\fint_{B_r(x)}\langle \nu^{h,2}_{t; x,y}, \|\xi_1-\xi_2\|^2\rangle\,dy\,dx\,dt \le \Upsilon(r),
	\end{equation*}	
	then there exist a subsequence $h_j \to 0$, $j \in \N$ and a time-parameterized probability measure $\mu_t \in L^1_t(\Prob)$ such that
	\begin{equation}
	\label{eq:convW1} \int_0^T W_1(\mu_t^{h_j}, \mu_t)\,dt \stackrel{j\to\infty}{\longrightarrow} 0.
	\end{equation}
	
	Furthermore, if we denote $\bnu_t = (\nu^{1}_t, \nu^{2}_t, \dots)$ the time-parameterized correlation measure associated to the limit $\mu$, the following properties are preserved:
	\begin{enumerate}[I.]
		\item $L^2$-bound: for a.e. $t \in [0, T)$, 
		\[\int_\domain \langle \nu^1_{t, x}, \|\xi\|^2 \rangle \,dx \le R^2. \]
		\item The two-point correlations satisfy
		\[ S_r^2(\bnu, T)^2 \le \Upsilon(r). \]
		\item \label{prop:strongconv_time} We say that a function $g \in C([0, T)\times \domain^k \times \Uk)$, is an \emph{admissible observable} if there exists $C>0$ such that for all $t \in [0, T)$, $\bx \in \domain^k$, $\bm{\xi}, \bm{\xi}' \in \Uk$,
		\begin{equation*}
		\begin{split} |g(t, \bx, \bm{\xi})| &\le C \prod_{i=1}^k \left( 1 + \|\xi_i\|^2 \right), \\
		|g(t, \bx, \bm{\xi}) - g(t, \bx, \bm{\xi}')| &\le C \sum_{i=1}^k L_i(\bm{\xi}, \bm{\xi}') \|\xi_i - \xi_i'\| \sqrt{1 + \|\xi_i\|^2 + \|\xi'_i\|^2},
		\end{split}
		\end{equation*}
		with 
		\[ L_i(\bm{\xi}, \bm{\xi}') \coloneqq \prod_{j = 1, j \neq i}^k \left( 1 + \|\xi_i\|^2 + \|\xi'_i\|^2 \right). \]
		
		Then, any admissible observable $g$ converges strongly in $L^1([0, T) \times \domain^k)$ in expectation, i.e.,
		\[ \lim_{j \to \infty} \int_0^T \int_{\domain^k} \left|\langle \nu_{t,\bx}^{h_j, k},\, g(t, \bx, \xi)\rangle - \langle \nu_{t,\bx}^{k},\, g(t,\bx, \xi)\rangle \right| \,d\bx dt = 0.\]
	\end{enumerate}
\end{theorem}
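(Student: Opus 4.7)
The plan is to establish two complementary forms of compactness — norm-topology tightness of $\{\mu^h_t\}$ in $\Prob(L^2_x)$ at a.e.\ fixed $t$, and equicontinuity in $t$ in a weaker metric — and combine them to extract an $L^1$-in-$t$ $W_1$-convergent subsequence. The main obstacle is the first step: on the infinite-dimensional space $L^2_x$, the uniform support condition $\mu^h_t(B_R(0))=1$ only provides weak-$\ast$ tightness, which is insufficient for $W_1$-convergence. The diagonal continuity hypothesis is precisely the compensating ingredient.

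For fixed $t$, I would first use Theorem \ref{thm:duality} to translate the diagonal continuity bound: for a.e.\ $t$,
\[ \int_{L^2_x} \Bigl[\int_D \fint_{B_r(x)} \|\bu(x)-\bu(y)\|^2 \, dy\, dx \Bigr]\, d\mu_t^h(\bu) \;=\; \int_D \fint_{B_r(x)} \langle\nu^{h,2}_{t;x,y},\|\xi_1-\xi_2\|^2\rangle\,dy\,dx, \]
whose time integral is $\le \Upsilon(r)$. The bracketed quantity is an $L^2$-mean-oscillation which, together with the uniform bound $\|\bu\|_{L^2}\le R$, characterises $L^2$-precompactness of sample paths via Kolmogorov-Riesz. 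A Chebyshev argument across dyadic scales $r_n\to0$ then produces, for every $\varepsilon>0$ and a.e.\ $t$, a norm-bounded and equicontinuous (hence $L^2$-compact) set $K_\varepsilon$ with $\mu^h_t(K_\varepsilon)\ge 1-\varepsilon$ uniformly in $h$. By Prokhorov this gives weak tightness in $\Prob(L^2_x)$; combined with the bounded support, it delivers relative $W_1$-compactness for a.e.\ $t$.

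Next I would bring in time. The uniform time-regularity \eqref{eq:timeregulartransport} yields equicontinuity of $t\mapsto \mu^h_t$ in the weaker Wasserstein metric $W_1^{H^{-L}}$ with modulus $C|t-s|$. Choose a countable dense set of times, extract a diagonal subsequence $h_j$ along which $\mu^{h_j}_{t_n}$ converges in $W_1$ at each $t_n$ (using slice compactness), then extend convergence to a.e.\ $t$ in $W_1^{H^{-L}}$ by a Helly / Arzel\`a--Ascoli argument. Because $\{\mu^{h_j}_t\}_j$ is a.e.\ relatively $W_1$-compact and the two limits must agree, a.e.\ $W_1^{H^{-L}}$-convergence upgrades to a.e.\ $W_1$-convergence; dominated convergence with majorant $2R$ then yields \eqref{eq:convW1}. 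Lemma \ref{lemma:limittimecontinuous} supplies time-regularity of the limit automatically.

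Finally, I would verify preservation of properties using $W_1$-lower semicontinuity together with Fatou in $t$. Both $\int_D \langle \nu^1_{t,x},\|\xi\|^2\rangle\,dx = \int \|\bu\|_{L^2_x}^2\,d\mu_t$ and the inner integrand of $S_r^2(\bnu,T)^2$ are convex, nonnegative, lower semicontinuous functionals of the underlying measure, so the bounds $R^2$ and $\Upsilon(r)$ transfer to the limit. For property \ref{prop:strongconv_time}, the quadratic growth and Lipschitz-type modulus assumed on $g$ combine with the uniform $L^2$-moment bound to give uniform integrability; a Vitali-type argument, using $W_1$-convergence of $\mu^{h_j}_t$ applied to the Lipschitz functional $\bu\mapsto \int_{D^k} g(t,\bx,\bu(\bx))\,d\bx$ (with appropriate truncation in $\|\bu\|_{L^2}$), yields the stated strong $L^1([0,T)\times D^k)$-convergence.
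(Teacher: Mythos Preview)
The paper does not contain a proof of this theorem: it is stated as a citation of \cite{LMP1}, Theorem~2.4, and is used as a black box in the subsequent convergence analysis (Theorems~\ref{thm:relativeconv} and~\ref{thm:lxw}). There is therefore no ``paper's own proof'' to compare your proposal against.

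That said, your sketch is broadly the right strategy for results of this type (Kolmogorov--Riesz tightness from the two-point modulus, Arzel\`a--Ascoli in time via the $H^{-L}$ Lipschitz bound, then lower semicontinuity for the preserved properties). One point that would need tightening in a full write-up: the hypothesis controls only the \emph{time-integrated} structure function $S_r^2(\bnu^h,T)^2\le\Upsilon(r)$, not the slice at fixed $t$. Your passage ``for a.e.\ $t$ \ldots a norm-bounded and equicontinuous set $K_\varepsilon$ with $\mu^h_t(K_\varepsilon)\ge 1-\varepsilon$ uniformly in $h$'' is not immediate from a time-integrated bound, since for a given $t$ the slice bound obtained via Fubini may depend on $h$. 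One either has to run the tightness argument at the level of the time-integrated Wasserstein distance directly, or first extract a subsequence along which the slice structure functions are controlled for a.e.\ $t$ (e.g.\ via a double Chebyshev over a dyadic family of radii and a countable dense set of times). This is repairable, but it is the one genuine gap in the outline.
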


\subsection{Statistical solutions of the incompressible Euler equations}
With the tools presented before, we can finally rigorously define the object of interest of this work: \emph{statistical solutions} of the incompressible Euler equations. We follow notation here from \cite{FLMfoundations,LMP1}.
\begin{definition}\label{def:statsol}
	(\cite{LMP1}, Def. 3.1) A time-parameterized probability measure $\mu_t \in L^1_t(\Prob)$ is a \textbf{statistical solution of the incompressible Euler equations} with initial data $\bar{\mu} \in \Prob(L^2(\domain; \R^d))$ if $t \mapsto \mu_t$ is time-regular, and the associated correlation measure $\bm{\nu}_t$ (in the sense of Theorem \ref{thm:duality}) satisfies:
	\begin{enumerate}
		\item Given ${\phi}_1, \dots, {\phi}_k \in C^\infty([0,T)\times \domain;\mathbb{R}^d)$ with $\div({\phi}_i) = 0$ for all $i=1,\dots,k$, set
		\[
		{\phi}(t,\bx)
		= 
		{\phi}_1(t,{x}_1) \otimes \dots \otimes {\phi}_k(t,{x}_k),
		\quad
		\text{where }\bx=({x}_1,\dots,{x}_k) \in \domain^k.
		\]
		Then $\nu^k = \nu^{k}_{t,{x}_1,\dots,{x}_k}$ satisfies
		\begin{gather} 
		\label{eq:statsol1}
		\begin{aligned}
		\int_0^T \int_{\domain^k} &\langle \nu^{k}, {\xi}_1 \otimes \dots \otimes {\xi}_k \rangle \colon \partial_t {\phi}
		+ \sum_{i=1}^k \langle \nu^{k}, {\xi}_1 \otimes \dots \otimes {F}({\xi}_i) \otimes \dots \otimes  {\xi}_k \rangle \colon \nabla_{{x}_i} {\phi}
		\, d\bx \, dt
		\\
		&\quad + \int_{\domain^k} \langle \bar{\nu}^{k}, {\xi}_1 \otimes \dots \otimes {\xi}_k \rangle \colon {\phi}(0,\bx) \, d\bx
		=
		0.
		\end{aligned}
		\end{gather}
		Here $\bar{\nu}$ is the correlation measure corresponding to the initial data $\bar{\mu}$. We denote ${F}({\xi}) \coloneqq {\xi}\otimes {\xi}$ and the contraction in the second term is more explicitly given by
		\[
		\left({\xi}_1 \otimes \dots \otimes {F}({\xi}_i) \otimes \dots \otimes  {\xi}_k \right)\colon \nabla_{{x}_i} {\phi}
		=
		\left[
		\textstyle\prod_{j\ne i} \left({\xi}_j \cdot {\phi}_j\right)
		\right] 
		\left(
		{\xi}_i\cdot \nabla_{{x}_i} {\phi}_i 
		\right)\cdot {\xi}_i. 
		\]
		\item For all $\psi \in C_c^\infty(\domain)$, and for a.e. $ \in [0, T)$,
		\begin{gather}
		\label{eq:statsol2}
		\int_{\domain^2} \langle \nu^2_{t,x,y}, {\xi}_1 \otimes {\xi}_2 \rangle \colon \left(\nabla \psi(x) \otimes \nabla \psi(y)\right)
		\, dx \, dy = 0.
		\end{gather}
	\end{enumerate}
\end{definition}

As is the case for weak solutions (cf. Lions' notion of dissipative solutions, \cite{Lions96}), a stronger definition of solution, including a condition on energy dissipation, provides better theoretical properties. This is the object of the following:

\begin{definition}
	\label{def:dissipativestatsol}
	(\cite{LMP1}, Def. 3.2) A statistical solution $\mu_t \in L^1_t(\Prob)$ of the incompressible Euler equations with initial data $\bar{\mu} \in \Prob(L^2_x)$ is called a \emph{dissipative statistical solution} if, for all $M \in \N$, for every choice of coefficients $\bm{\alpha} = (\alpha_i)_{i=1}^M \in (0, 1]$ with $\sum_{i=1}^M \alpha_i = 1$, and for every $(\bar{\mu}_1, \dots, \bar{\mu}_M) \in \Prob(L^2_x)^M$ with $\sum_{i=1}^M \alpha_i  \bar{\mu}_i = \bar{\mu}$, there exists a function $t \mapsto (\mu_{1,t}, \dots, \mu_{M,t})$ with $\sum_{i=1}^M \alpha_i \mu_{i} = \mu$ such that, for all $i \in \{1, \dots, M\}$:
	\begin{itemize}
		\item $t \mapsto \mu_{i,t}$ is weak* measurable, with $\mu_{i,t} |_{t=0} = \bar{\mu}_i$;
		\item for all $\phi \in C_c^\infty([0, T) \times \domain)$ with $\div(\phi) \equiv 0$,
		\begin{align*}
		\begin{split}
		\int_0^T \int_{L_x^2} \int_\domain &\left[ \bu \cdot \partial_t \phi + (\bu \otimes \bu) \colon \nabla \phi \right]\,dx\,d\mu_{i,t}(\bu)\,dt
		= - \int_{L^2_x} \int_\domain \bu \cdot \phi(0, x)\,dx \,d\bar{\mu}_i(\bu);
		\end{split}
		\end{align*} 
		\item for a.e. $t \in [0, T)$, and for $i \in \{1, \dots, M\}$,
		\begin{equation*}
		\int_{L^2_x} \|\bu\|_{L^2_x}^2 \,d\mu_{i,t}(\bu) \le \int_{L^2_x} \|\bu\|_{L^2_x}^2 \, d\bar{\mu}_i(\bu) .
		\end{equation*}
	\end{itemize}
\end{definition}

Dissipative statistical solutions are a recently derived framework, for which the currently available results appear promising. Specifically, the two following results are proven in \cite{LMP1}. The first is that, under a regularity condition, dissipative statistical solutions satisfy short-time existence and uniqueness:

\begin{theorem} (\cite{LMP1}, Corollary 3.1)
	\label{thm:localeu}
	If $s \ge \lfloor d/2 \rfloor + 2$, and if $\exists C >0$ such that the initial data $\bar{\mu} \in \Prob(L^2_x)$ is concentrated on
	\[ \{ \bar{u} \in H^s(\domain; \R^d) \colon \|\bar{u} \|_{H^s(\domain; \R^d)} \le C \},\]
	then there exists $T^*>0$, depending only on $C$, and a statistical solution $\mu_t \colon [0, T^*] \to \Prob(L^2_x)$ with initial data $\bar{\mu}$.	
	Furthermore, $\mu_t$ is unique in the class of dissipative statistical solutions for $t \in [0, T^*]$.
\end{theorem}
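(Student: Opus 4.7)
The plan is to combine the classical local existence of strong $H^s$-solutions of the incompressible Euler equations with a Brenier-type weak--strong uniqueness argument adapted to the dissipative decomposition of Definition \ref{def:dissipativestatsol}.

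\textbf{Existence.} I would invoke the Kato--Majda theorem (cf.\ \cite{MajdaBertozzi}), which guarantees, for every $\barbu$ with $\|\barbu\|_{H^s} \le C$, a unique classical solution $t \mapsto S_t(\barbu) \in C([0,T^*];H^s(\domain;\R^d))$ with $T^* = T^*(C,s) > 0$, and continuity of $S_t$ on the $H^s$-ball in the $L^2$ topology. Setting
\[
\mu_t := (S_t)_* \bar{\mu}, \qquad t \in [0,T^*],
\]
weak* measurability of $t \mapsto \mu_t$ follows from continuity of $S_t$; the $L^2$ moment is preserved by energy conservation along strong trajectories; and time-regularity in $H^{-L}_x$ (for $L$ large enough to absorb $\bu \otimes \bu$ by Sobolev duality) is read off $\partial_t \bu = -\div(\bu \otimes \bu) - \nabla p$ using the pushforward transport plan $\pi_{s,t} = (S_s,S_t)_*\bar{\mu}$. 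Finally, testing the classical momentum equation against each $\phi_i$ for a fixed $\bu$ in the support, taking the tensor product of the resulting $k$ identities over independent copies, and integrating against $\bar{\mu}$, yields precisely \eqref{eq:statsol1} via Theorem \ref{thm:duality}; the incompressibility relation \eqref{eq:statsol2} is inherited pointwise.

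\textbf{Uniqueness.} Let $\tilde{\mu}_t$ be any dissipative statistical solution with initial data $\bar{\mu}$. For $\varepsilon > 0$, I would partition the support of $\bar{\mu}$ into Borel pieces $\{A_i\}_{i=1}^M$ of $L^2$-diameter at most $\varepsilon$ (possible because this support lies in a bounded, hence separable, $H^s$-ball), set $\alpha_i := \bar{\mu}(A_i) > 0$, $\bar{\mu}_i := \alpha_i^{-1} \bar{\mu}|_{A_i}$, and pick $\barbu_i \in A_i$ with strong trajectory $\bu_i(t) := S_t(\barbu_i)$. Definition \ref{def:dissipativestatsol} then furnishes $\mu_{i,t}$ with $\sum_i \alpha_i \mu_{i,t} = \tilde{\mu}_t$, each solving the averaged weak Euler formulation with initial law $\bar{\mu}_i$ and satisfying the energy inequality. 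A relative-energy identity in expectation,
\[
\frac{d}{dt} \int \|\bu - \bu_i(t)\|_{L^2_x}^2 \,d\mu_{i,t}(\bu) \;\le\; K(\bu_i)\int \|\bu - \bu_i(t)\|_{L^2_x}^2 \,d\mu_{i,t}(\bu),
\]
together with Gronwall and the bound $\|\nabla \bu_i\|_{L^\infty_{t,x}} < \infty$ (which holds since $s \ge \lfloor d/2\rfloor + 2$ gives $H^s \hookrightarrow C^1$), yields $\int \|\bu - \bu_i(t)\|_{L^2_x}^2 d\mu_{i,t}(\bu) \le C' \varepsilon^2$, with $C'$ depending only on $C$, $s$, and $T^*$. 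Summing in $i$ and sending $\varepsilon \to 0$ forces $W_1(\tilde{\mu}_t, (S_t)_* \bar{\mu}) \to 0$, hence $\tilde{\mu}_t = \mu_t$.

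\textbf{Main obstacle.} The hardest part will be rigorously justifying the relative-energy computation at the level of probability measures: the averaged weak formulation supplied by Definition \ref{def:dissipativestatsol} only sees test fields $\phi \in C^\infty_c$, whereas the reference field $\bu_i(t)$ is a random (though smooth in $(t,x)$) element of $C^1$, so one needs a density/mollification argument reproducing Brenier's deterministic calculation in expectation, controlled by $\|\nabla \bu_i\|_{L^\infty}$; this is precisely why the threshold $s \ge \lfloor d/2 \rfloor + 2$ is imposed. Subordinate but non-trivial steps are the measurable construction of the fine partition and the disintegration of $\bar{\mu}$ on the Polish space $L^2_x$, and verifying that the pushforward $(S_t)_* \bar{\mu}$ is uniformly time-regular so that Definition \ref{def:statsol} is genuinely satisfied.
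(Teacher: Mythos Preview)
The paper does not supply its own proof of this theorem: it is stated verbatim as a citation of \cite{LMP1}, Corollary~3.1, with no argument given. So there is no in-paper proof to compare against.

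That said, your outline is the expected one and matches the strategy of the cited reference. Existence by pushing $\bar\mu$ forward along the Kato--Majda flow $S_t$ on a uniform $H^s$-ball is exactly how $\mu_t$ is constructed; the verification of \eqref{eq:statsol1}--\eqref{eq:statsol2} and of time-regularity via the coupling $\pi_{s,t}=(S_s,S_t)_\#\bar\mu$ is correct. For uniqueness, your use of the convex decomposition built into Definition~\ref{def:dissipativestatsol}, followed by a Brenier relative-energy estimate against a strong reference trajectory on each piece and Gronwall with $\|\nabla \bu_i\|_{L^\infty_{t,x}}$ (available precisely because $s\ge\lfloor d/2\rfloor+2$), is again the argument in \cite{LMP1}. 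The technical points you flag---density of smooth test fields so that the strong solution can be inserted into the averaged weak formulation, and the measurable partition on the Polish space $L^2_x$---are genuine but routine, and you have identified them accurately. Nothing in your sketch is off-track.
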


The second result is that restricting to two dimensions, and under smoothness conditions, dissipative statistical solutions verify global existence and weak-strong uniqueness in the following sense:
\begin{theorem}(\cite{LMP1}, Corollary 3.2)
	\label{thm:wsu}
	Let $d=2$, and let $\alpha \in (0,1)$. If $\bar{\mu}$ is concentrated on $C^{1,\alpha}(\domain; \R^d)$, and if there exists $M>0$ such that $\bar{\mu}(B_M(0)) = 1$, then there exists a dissipative statistical solution $\mu_t$ with initial data $\bar{\mu}$. Furthermore, $\mu_t$ is unique in the class of dissipative statistical solutions with initial data $\bar{\mu}$.
\end{theorem}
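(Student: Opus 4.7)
\bigskip

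\noindent\textbf{Proof proposal.} The plan is to obtain existence by pushing forward the initial measure along the classical flow map, and to obtain uniqueness by combining the decomposition property in Definition \ref{def:dissipativestatsol} with the pointwise (deterministic) weak--strong uniqueness principle of Brenier.

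For existence, I would first invoke the classical well-posedness theory for the two-dimensional incompressible Euler equations in H\"older spaces (Wolibner's theorem): if $\bar\bu\in C^{1,\alpha}(\T^2;\R^2)$ is divergence-free, there exists a unique global classical solution $\bu(t,\cdot)\in C^{1,\alpha}$, and the solution operator $S_t\colon \bar\bu\mapsto \bu(t,\cdot)$ is continuous from $C^{1,\alpha}$ to $C^{1,\alpha}$, with $\|S_t\bar\bu\|_{L^2}=\|\bar\bu\|_{L^2}$ (energy conservation) and with $\partial_t S_t\bar\bu$ bounded in some negative Sobolev space $H^{-L}$ uniformly on bounded sets of $C^{1,\alpha}\cap B_M(0)$. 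Define $\mu_t := (S_t)_\#\bar\mu$. Weak* measurability in $t$ follows from continuity of $t\mapsto S_t\bar\bu$ in $L^2$ for each $\bar\bu$ and dominated convergence, using the uniform $L^2$ bound from $\bar\mu(B_M(0))=1$. Time-regularity in the sense of Definition \ref{subdef:timereg}\ref{subdef:timereg} follows by taking $\pi_{s,t}$ to be the pushforward of $\bar\mu$ under $\bar\bu\mapsto(S_s\bar\bu,S_t\bar\bu)$ and estimating $\|S_t\bar\bu-S_s\bar\bu\|_{H^{-L}}\le |t-s|\,\sup_{\tau}\|\partial_\tau S_\tau\bar\bu\|_{H^{-L}}$, which is uniformly bounded on the support of $\bar\mu$. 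The weak formulation \eqref{eq:statsol1} and the incompressibility constraint \eqref{eq:statsol2} then follow by integrating the pointwise classical identities against $\bar\mu$ and using the duality formula \eqref{eq:duality} to rewrite the integrals as moments of $\bnu_t$.

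For the dissipation clause, given any convex decomposition $\bar\mu=\sum_i\alpha_i\bar\mu_i$, simply set $\mu_{i,t}:=(S_t)_\#\bar\mu_i$. Linearity of pushforward gives $\sum_i\alpha_i\mu_{i,t}=\mu_t$; each $\mu_{i,t}$ satisfies the weak Euler identity by the same argument as above (since a.e.\ realization is a classical solution); and the energy inequality holds with equality thanks to conservation of $\|\bu\|_{L^2}^2$ along the classical 2D flow. Thus $\mu_t$ is a dissipative statistical solution.

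For uniqueness, let $\tilde\mu_t$ be any other dissipative statistical solution with initial data $\bar\mu$. The strategy is to use progressively finer decompositions of $\bar\mu$ to localize $\tilde\mu_t$ near individual classical trajectories. Concretely, for each partition of the support of $\bar\mu$ into small $L^2$-pieces $\{A_i\}$ of diameter $\le\eps$, set $\alpha_i:=\bar\mu(A_i)$ and $\bar\mu_i:=\alpha_i^{-1}\bar\mu|_{A_i}$, and pick a representative $\bar\bu_i\in A_i$. By Definition \ref{def:dissipativestatsol}, there exist components $\tilde\mu_{i,t}$ with $\sum_i\alpha_i\tilde\mu_{i,t}=\tilde\mu_t$ such that each $\tilde\mu_{i,t}$ is a ``weak statistical solution'' with initial datum $\bar\mu_i$ and with decreasing expected energy. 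For each fixed $i$, the associated $\mu_{i,t}$ is supported on weak solutions of Euler starting from $L^2$-data close to $\bar\bu_i$; since $S_t\bar\bu_i$ is a classical $C^{1,\alpha}$ solution, Brenier's weak--strong uniqueness (for a single weak solution against a classical one) gives a Gronwall-type bound of the form
\begin{equation*}
\int_{L^2_x}\|\bu-S_t\bar\bu_i\|_{L^2_x}^2\,d\tilde\mu_{i,t}(\bu)\ \le\ C(t)\,\bigl(\eps^2 + o_\eps(1)\bigr),
\end{equation*}
where the right-hand side controls the $L^2$-spread of $\bar\mu_i$ around $\bar\bu_i$. Summing over $i$ and passing to the limit $\eps\to 0$, one obtains that $\tilde\mu_t$ is supported on $\{S_t\bar\bu:\bar\bu\in\mathrm{supp}\,\bar\mu\}$ with the same conditional law as $(S_t)_\#\bar\mu$; consequently $\tilde\mu_t=\mu_t$.

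The main obstacle is precisely this last step: making rigorous the passage from ``for every finite decomposition there exist components'' to a Young-measure-type disintegration of $\tilde\mu_t$ along classical trajectories, and uniformly controlling the Brenier relative-energy Gronwall estimate as one refines the partition. This requires (i) a careful compactness argument to extract a joint decomposition from a sequence of increasingly fine ones, which is where the uniform time-regularity and the $L^2$-boundedness $\tilde\mu_t(B_M(0))=1$ (propagated from $\bar\mu$) come in, and (ii) verifying that the deterministic relative-energy inequality, which in Brenier's proof requires only $C^{1,\alpha}$ regularity of one of the two solutions, can be integrated against $\tilde\mu_{i,t}$ with constants depending only on $\|S_t\bar\bu_i\|_{C^{1,\alpha}}$, uniformly for $\bar\bu_i$ in the support of $\bar\mu$.
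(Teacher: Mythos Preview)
The paper does not contain a proof of this statement: Theorem~\ref{thm:wsu} is quoted from \cite{LMP1} (Corollary~3.2) as background, without any argument given here. There is therefore nothing in the present paper to compare your proposal against.

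That said, your outline is essentially the argument of \cite{LMP1}: existence via pushforward along the classical 2D flow map (using Wolibner/H\"older well-posedness and energy conservation to verify the dissipation clause), and uniqueness by combining the convex-decomposition property of Definition~\ref{def:dissipativestatsol} with Brenier's deterministic weak--strong relative-energy estimate, refined over partitions of $\mathrm{supp}(\bar\mu)$. You have also correctly identified the genuine technical point: passing from the finite-decomposition hypothesis to a limiting disintegration, and controlling the Gronwall constant uniformly over $\bar\bu_i$ in the (bounded) support of $\bar\mu$. In \cite{LMP1} this is handled by working directly with the Wasserstein distance $W_2(\tilde\mu_t,(S_t)_\#\bar\mu)$ and using that the relative-energy inequality, integrated against each $\tilde\mu_{i,t}$, yields a bound on a transport cost; summing and optimizing over partitions then gives $W_2=0$ without needing to extract a limiting joint decomposition explicitly. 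Your formulation via ``Young-measure-type disintegration'' is a slightly different packaging of the same idea and would also work, but the Wasserstein route is cleaner.
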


\section{A deterministic finite volume scheme for the incompressible Euler equations}
\label{sec:scheme}
In this section we will present a numerical scheme for the incompressible Euler equations, originally presented in \cite{Leonardi18}, and based on the work of \cite{BCG89}. This is an efficient finite volume method based on a discrete Leray projection.

Let $\bem$ be the $m$-th unit vector in the canonical basis of $\mathbb{R}^d$, $(\bem)_i = \delta_{i,m}$. We assume a partition of $D$ into a uniform Cartesian grid, with
\[ \Dx_1,~ \Dx_2, \, \dots,~  \Dx_d = O(h).\]
Let the set of cells be $\{ C_{\bi} \}_{\bi \in I}$, with $C_{\bi} \coloneqq \prod_{m=1}^d [i_m \Dx_m, (i_m+1)\Dx_m]$, for $\bi = (i_1, \dots, i_d) \in I$ the set of indices in the Cartesian mesh, $I = \{1, \dots, N_1\} \times \dots \times \{1, \dots, N_d\}$. We refer by $x_\bi$ to the midpoint of cell $C_\bi$. We term $\Gh$ the set of piecewise constant functions in the grid, i.e.,
\begin{equation*}
\Gh^k \coloneqq \{ \bu \colon D \to \mathbb{R}^k \colon \bu\big|_{\mathring{C_{\bi}}} \text{ constant }\forall \bi \in I \}.
\end{equation*}

We implicitly identify a function in $\Gh^k$ with the vector of cell averages in $\mathbb{R}^{k|I|}$; e.g. for $\bu \in \Gh^k$, we write $ \| \bu\|_{L^2(D)}^2  = \sum_{\bi \in I} \| \bu_\bi \|_2^2 \Dx_1 \cdots \Dx_d$. 

For $\bu = (u_1, \dots, u_d) \in \Gh^d$ (i.e. $u_i \in \Gh^1$ for all $i$), and $\varphi \in \Gh^1$, we define the following operators:
\begin{align*}
	\divh \colon \Gh^d \to \Gh^1 ; \qquad &(\divh \bu)_{\bi} \coloneqq \sum_{m=1}^d \dfrac{(u_m)_{\bi + \bem } - (u_m)_{\bi-\bem} }{2 \Dx_m}, \\
	\gradh \colon \Gh^1 \to \Gh^d ; \qquad &(\gradh \varphi)_{\bi} \coloneqq \left[
		\dfrac{\varphi_{\bi + \be_1} - \varphi_{\bi - \be_1}}{2\Dx_1}, \dots, 
		\dfrac{\varphi_{\bi + \be_d} - \varphi_{\bi - \be_d}}{2\Dx_d} \right], \\
	\laplh \colon \Gh^1 \to \Gh^1 ; \qquad & (\laplh \varphi)_{\bi} \coloneqq \sum_{m=1}^d \frac{\varphi_{\bi + 2\bem} - 2\varphi_{\bi} + \varphi_{\bi - 2\bem} }{4 \Dx_m^2}.
\end{align*}

We will denote by $\Ghdiv$ the vector subspace in $\Gh^d$ of discretely divergence-free functions, i.e.
\[ \Ghdiv = \{ \bu \in \Gh^d \colon \divh \bu \equiv 0 \}. \]

Recall that we will assume for convenience periodic boundaries. A first trivial observation:
\begin{lemma}
	\label{lemma:adj}
	Let $\bu \in \Gh^d$, $\psi \in \Gh^1$. Then it holds that:
	\begin{equation*}
	\sum_{\bi \in I} \bu_\bi \cdot \gradh \psi_\bi = - \sum_{\bi\in I} \psi_\bi ~\divh \bu_\bi.
	\end{equation*}
\end{lemma}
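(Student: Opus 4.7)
The plan is to prove this discrete integration-by-parts identity by expanding the dot product of $\bu_\bi$ with $\gradh \psi_\bi$ componentwise, interchanging the order of the sums, and then performing a discrete change of index for each coordinate direction, exploiting the assumed periodic boundary conditions to ensure that no boundary terms appear.

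More concretely, I would first write
\[
\sum_{\bi \in I} \bu_\bi \cdot \gradh \psi_\bi
= \sum_{\bi \in I} \sum_{m=1}^d (u_m)_\bi \,\frac{\psi_{\bi + \bem} - \psi_{\bi - \bem}}{2 \Dx_m}
= \sum_{m=1}^d \frac{1}{2\Dx_m} \sum_{\bi \in I} (u_m)_\bi \bigl(\psi_{\bi + \bem} - \psi_{\bi - \bem}\bigr).
\]
The next step is the key manipulation: in each inner sum over $\bi$, I reindex by the translation $\bi \mapsto \bi - \bem$ in the first term and $\bi \mapsto \bi + \bem$ in the second term. Because the mesh is a uniform Cartesian grid on $\T^d$ with periodic identification, $\bi \pm \bem$ lies again in $I$ and the sum is simply permuted, so no boundary contributions arise. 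This gives
\[
\sum_{\bi \in I} (u_m)_\bi \bigl(\psi_{\bi + \bem} - \psi_{\bi - \bem}\bigr)
= \sum_{\bi \in I} \psi_\bi \bigl( (u_m)_{\bi - \bem} - (u_m)_{\bi + \bem} \bigr).
\]

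Substituting this back and recognizing the definition of $\divh$ gives
\[
\sum_{\bi \in I} \bu_\bi \cdot \gradh \psi_\bi
= - \sum_{\bi \in I} \psi_\bi \sum_{m=1}^d \frac{(u_m)_{\bi + \bem} - (u_m)_{\bi - \bem}}{2\Dx_m}
= - \sum_{\bi \in I} \psi_\bi \,\divh \bu_\bi,
\]
which is the claim. There is essentially no analytical obstacle here; the only thing to be careful about is that the index shift is genuinely a bijection of $I$, which is precisely what the periodic boundary conditions guarantee. Without periodicity one would pick up boundary flux terms, but under the standing assumption of the paper these vanish.
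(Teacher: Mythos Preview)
Your argument is correct and is exactly the standard summation-by-parts manipulation one would expect: expand the definitions, shift indices by $\pm\bem$, and use periodicity of the grid to ensure the shift is a bijection of $I$. The paper itself does not spell out a proof of this lemma (it is introduced as ``a first trivial observation''), so your write-up simply fills in the omitted details along the obvious route.
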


With this notation, we are finally in position of defining the finite volume scheme of \cite{Leonardi18}. Fix $\theta \in \big(\frac{1}{2}, 1\big]$. The scheme has the following form; a detailed explanation of each operator therein follows.
\begin{align}
\label{eq:scheme1}
\frac{{\bus}^{,n+1} - \bu^n }{\Dt^n} + \bm{C}(\bu^n, \bbu) &= \bm{D}(\bbu)\\
\frac{\bu^{n+1} - \bu^n}{\Dt^n} &= \bP\left( \frac{{\bus}^{,n+1} - \bu^n}{\Dt^n} \right) \label{eq:scheme2},
\end{align}
coupled with the initial condition $\bu^0 = \Phdiv(\barbu) \in \Ghdiv$, and with time-averaged velocity $\but^{n+\frac{1}{2}} \in \Gh^d$ defined as
\begin{equation*}
\bbu := \theta {\bus}^{,n+1} + (1-\theta) \bu^n.
\end{equation*}
In the scheme above, the operators are defined as follows:
\begin{itemize}
	\item Operator $\bC$ approximates the convective term $[(\bu \cdot \nabla)\bu]$ as
	\begin{align*}
	\begin{split}
	\bm{C}&\colon \Gh^d \times \Gh^d \to \Gh^d \\
	\bm{C}(\bu, \bv)_\bi &= \sum_{m=1}^d \dfrac{\bFm(\bu_{\bi}, \bu_{\bi + \bem}, \bv_{\bi}, \bv_{\bi + \bem}) - \bFm(\bu_{\bi-\bem}, \bu_{\bi}, \bv_{\bi-\bem}, \bv_{\bi})}{\Dx_m} 
	\end{split}
	\end{align*}
	with, denoting $\bu^L \coloneqq (u^L_i)_{i=1}^d$, and analogously for $\bu^R$:
	\begin{align*}
	\begin{split}
	&\bFm \colon \Gh^d \times \Gh^d \times \Gh^d \times \Gh^d \longrightarrow \Gh^d\\
	&\bFm(\bu^L, \bu^R, \bv^L, \bv^R) = 
	\frac{u_m^L + u_m^R}{4} ( \bv^L + \bv^R).
	\end{split}
	\end{align*}
	
	\item Operator $\bD$ is a Lax-Wendroff second order diffusion operator,
	\begin{equation*} 
	\begin{split}
	\bD&\colon \Gh^d \to \Gh^d \\
	\bm{D}(\bu)_\bi &= \epsilon \sum_{m=1}^d \frac{\|\jump{\bu}_{\bi, m}\|_2 \jump{\bu}_{\bi, m} - \|\jump{\bu}_{\bi - \bem, m}\|_2 \jump{\bu}_{\bi - \bem, m}}{\Dx_m}, 
	\end{split}
	\end{equation*}
	with $\epsilon>0$ fixed, and $ \jump{\bu}_{\bi, m} \coloneqq \bu_{\bi + \bem} - \bu_{\bi}.$
	
	\item Operator $\bP$ is a discretely divergence-free projection, given by
	\begin{equation*}
	\begin{split}
	\bP &\colon \Gh^d \to \Ghdiv\\
	\bP(\bu) &= \bu - \gradh \psi,
	\end{split}
	\end{equation*} 
	where $\psi (\equiv \psi(\bu)) \in \Gh^1$ is the solution of the linear system of equations
	\begin{equation*} \laplh \psi = \divh \bu, \end{equation*}
	paired with suitable boundary conditions. It is not difficult to see, cf. Lemma 3.22, \cite{CPPThesis}, that this is indeed a well-defined, orthogonal projection operator, which acts as a linear transformation of $\bu$.
	
	\item For $h>0$, and for $p \ge 1$, let $A^h \colon  L^p(D)^d \to \Gh^d$ be the \emph{cell averaging operator}, given by $\left(A^h(\phi)\right)_\bi \coloneqq \fint_{C_\bi} \phi(\bx)\,d\bx$. Let $\Phdiv\colon L^2(D; \R^d) \to \Ghdiv$ be defined by
	\[ \Phdiv \equiv \bP \circ A^h.\]
\end{itemize}

We now list some elementary properties of scheme \eqref{eq:scheme1}-\eqref{eq:scheme2}. Proofs are omitted for brevity, but they are straightforward with standard techniques; they can all be found in \cite{CPPThesis}.

\begin{lemma}  \label{lemma:schemeprops} Assume there exists $\lambda>0$, independent of $h$, with $\Dt^n < \lambda h$ $\forall n$. Then
	\begin{enumerate}[I.]
		\item Projection operator $\bP$ is energy-dissipative, i.e. for all $\bus \in \Gh^d$, 
		\begin{equation*}
		\|\bP (\bus_\bi)\|_{L^2} \le \|\bus_\bi\|_{L^2}.
		\end{equation*}
		
		\item \label{lemma:schemeprops:L2} Scheme \eqref{eq:scheme1}-\eqref{eq:scheme2} is \emph{energy-stable} or $L^2$-stable; i.e.,
		\begin{equation*}
		\| \bu_\bi^{n+1} \|_{L^2} \le \| \bu_\bi^n \|_{L^2}.
		\end{equation*}
		
		\item For all $m \in \{1, 2, \dots, d\}$, scheme \eqref{eq:scheme1}-\eqref{eq:scheme2} satisfies:
		\begin{align*}
		\sum_{n=0}^{N-1} \sum_{\bi \in I} \| \bbu_{\bi + \bem} - \bbu_{\bi} \|_2^3 \Dx_1 \cdots \Dx_d \Dt^n &\le C \| \bu^0 \|_{L^2(D)}^2 h, \\
		\sum_{n=0}^{N-1} \sum_{\bi \in I} \| {\bus}_{\bi}^{,n+1} - \bu_{\bi}^n \|_2^2 \Dx_1 \cdots \Dx_d \Dt^n &\le C \| \bu^0 \|_{L^2(D)}^2 h, 
		\end{align*}
		with $C$ independent of $\bu$ and $h$.
		
		\item \label{lemma:schemeprops:tv} Scheme \eqref{eq:scheme1}-\eqref{eq:scheme2} satisfies the following \emph{total variation-type property}:
		\begin{equation*} \sum_{m=1}^d \sum_{n=0}^{N-1} \sum_{\bi \in I} \left\| \bu_{\bi + \bem}^n - \bu_{\bi}^n \right\|_2^\tvxl \Dx_1 \dots \Dx_d \Dt^n \le C \|\bu^0\|_{L^2(D)}^{\frac{4}{3}} h^\frac{2}{3},  \end{equation*}
		where $C$ is independent of $\bu$ and $h$.
		
		\item \label{lemma:schemeprops:D} Operator $\bD$ (weakly) vanishes in the limit. That is: for all $\phi \in C^\infty([0, T) \times D; \R^d)$ such that for a.e. $t \in [0, T)$, $\div(t, \cdot) \equiv 0$, let $\phi^h(t, \cdot) \coloneqq \Phdiv \circ \phi(t, \cdot)$. Then there exists $C>0$ independent of $\phi$, $\bu$ and $h$ such that
		\[ \left| \int_0^T \int_D \bD(\but(t, x)) \cdot \phi^h(t, x) \,dx \,dt \right| \le C  \|\bu_0\|_{L^2(D)}^\frac{4}{3} h^\frac{2}{3}. \]
	\end{enumerate}
\end{lemma}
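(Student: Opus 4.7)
I would prove the five items in order, since items III--V all rest on the energy identity extracted in II.

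\emph{Items I and II.} For I, observe that $\bP$ is the discrete $\ell^2$-orthogonal projector onto $\Ghdiv$: writing $\bu = \bP(\bu) + \gradh \psi$ with $\psi$ solving $\laplh \psi = \divh \bu$, Lemma~\ref{lemma:adj} gives $\langle \bP(\bu),\, \gradh \psi\rangle_{L^2} = -\langle \divh \bP(\bu),\, \psi\rangle_{L^2} = 0$ because $\bP(\bu) \in \Ghdiv$, and Pythagoras concludes. For II, since $\bu^n \in \Ghdiv$ we have $\bP(\bu^n) = \bu^n$, so \eqref{eq:scheme2} collapses to $\bu^{n+1} = \bP({\bus}^{,n+1})$; by I it suffices to show $\|{\bus}^{,n+1}\|_{L^2} \le \|\bu^n\|_{L^2}$. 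Take the discrete $\ell^2$ inner product of \eqref{eq:scheme1} with $\bbu$, and use the algebraic identity $2\bbu \cdot ({\bus}^{,n+1} - \bu^n) = \|{\bus}^{,n+1}\|^2 - \|\bu^n\|^2 + (2\theta - 1)\|{\bus}^{,n+1} - \bu^n\|^2$. The convective contribution $\langle \bC(\bu^n, \bbu),\, \bbu\rangle$ vanishes via telescoping in the flux-form sum together with the discrete divergence-freeness of $\bu^n$, whereas $\langle \bD(\bbu),\, \bbu\rangle = -\epsilon \sum_{\bi,m} \|\jump{\bbu}_{\bi,m}\|_2^3 \,\Dx_1 \cdots \Dx_d / \Dx_m$ after discrete integration by parts. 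Dropping the nonnegative terms yields both II and the crucial dissipative energy identity.

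\emph{Item III.} Sum this energy identity from $n=0$ to $N-1$: the left-hand side telescopes to $\|\bu^N\|^2 - \|\bu^0\|^2 + (2\theta - 1)\sum_n \|{\bus}^{,n+1} - \bu^n\|^2$, and the right-hand side collects the dissipation $-2\epsilon \sum_n \Dt^n \sum_{\bi,m} \|\jump{\bbu}_{\bi,m}\|^3 \,\Dx_1 \cdots \Dx_d / \Dx_m$. Dropping $\|\bu^N\|^2 \ge 0$ and invoking the CFL condition $\Dt^n \le \lambda h$ to absorb the factor $1/\Dx_m$ into a constant times $h$, both bounds in III follow at once.

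\emph{Items IV and V.} For IV, decompose the difference $\bu^n_{\bi+\bem} - \bu^n_{\bi}$ using $\bbu = \bu^n + \theta({\bus}^{,n+1} - \bu^n)$ to split it into a $\bbu$-difference plus a residual controlled pointwise by $\|{\bus}^{,n+1} - \bu^n\|$. Hölder-interpolating the main term between the $\ell^3$ bound and the uniform $L^2$ bound from II, while absorbing the residual with the $\ell^2$ estimate from III, produces the exponent $h^{2/3}$ and the power $\|\bu^0\|^{4/3}$ (the precise exponent \texttt{tvxl} comes out of matching the interpolation). Item V is the main obstacle. The strategy is a discrete integration by parts: since $\bD(\bbu)_\bi$ is a discrete divergence of the flux $g_{\bi,m} \coloneqq \epsilon \|\jump{\bbu}_{\bi,m}\| \jump{\bbu}_{\bi,m}$, one obtains
\[
\int_0^T \int_D \bD(\bbu) \cdot \phi^h \, dx \, dt
= -\sum_n \Dt^n \sum_{\bi,m} g_{\bi,m} \cdot \bigl(\tfrac{\phi^h_{\bi+\bem} - \phi^h_\bi}{\Dx_m}\bigr) \, \Dx_1 \cdots \Dx_d.
\]
Apply Hölder in the combined $(n,\bi,m)$ sum with conjugate exponents $3/2$ and $3$: the first factor yields $\epsilon \bigl(\sum \|\jump{\bbu}\|^3 \Dx_1 \cdots \Dx_d \Dt^n\bigr)^{2/3} \le C \|\bu^0\|^{4/3} h^{2/3}$ via III, while the second factor is uniformly bounded in $h$ thanks to the smoothness of $\phi$ and the stability of $\Phdiv$ on smooth divergence-free fields. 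The delicate point is verifying this uniform discrete-Lipschitz bound on $\phi^h = \Phdiv \phi$, which is really the content of V and is where an $L^\infty$-type consistency estimate for the discrete Leray projector is needed.
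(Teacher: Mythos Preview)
The paper does not actually prove this lemma: immediately after the statement it says ``Proofs are omitted for brevity, but they are straightforward with standard techniques; they can all be found in \cite{CPPThesis}.'' So there is no in-paper argument to compare against, only the signal that the authors regard the proof as routine energy-method bookkeeping.

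Your outline is exactly that routine argument and is correct in substance. A few small comments. In III you slightly conflate two different mechanisms for extracting the factor $h$: for the cubic-jump bound you simply multiply the energy-dissipation estimate by $\Delta x_m = O(h)$ (no CFL needed), whereas for the $\|{\bus}^{,n+1}-\bu^n\|_{L^2}^2$ bound the extra $\Delta t^n$ is what requires $\Delta t^n \le \lambda h$. In IV your interpolation sketch is the right idea; once you fix $\tvxl=2$, the main term is handled by H\"older against the constant function (picking up $(C\|\bu^0\|^2 h)^{2/3}\cdot(T|D|)^{1/3}$), and the residual from $\bbu-\bu^n=\theta(\bus^{,n+1}-\bu^n)$ is controlled directly by the second estimate in III. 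Your treatment of V is spot on: after the summation by parts and H\"older with exponents $(3/2,3)$, the only nontrivial ingredient is the uniform discrete-Lipschitz bound on $\phi^h=\Phdiv\phi$, which is precisely what Lemma~\ref{lemma:Phdivprops}.\ref{lemma:Phdivprops:Linfbd} supplies; the paper likewise isolates that estimate as a separate technical lemma. (The paper's claim that $C$ in V is ``independent of $\phi$'' appears to be a slip; your bound, like any reasonable one, depends on $\|\nabla\phi\|_{L^\infty}$.)
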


Additionally the following properties are useful for the main results of this work in the sequel. Their proofs are fairly technical; we refer the interested reader to Appendix A in \cite{CPPThesis}.
\begin{lemma} \label{lemma:Phdivprops}
	Projection operator $\Phdiv \colon L^2(D; \R^d) \to \Ghdiv$ satisfies the following properties:
	\begin{enumerate}[I.]
		\item \label{lemma:Phdivprops:smalldiv} Let $\phi \in C^2(D; \R^d)$ with $\div(\phi) \equiv 0$. There exists $C = C\left(\max\limits_{|\alpha|=2} \left|D^\alpha \phi\right|\right)>0$ such that
		\begin{equation*}
		\| \phi - \Phdiv \circ \phi \|_{L^2(D; \R^d)} \le C h.
		\end{equation*}
		
		\item \label{lemma:Phdivprops:L2bd} There exists $C(d)$ such that for all $\phi \in L^2(D; \R^d)$, and for all $h>0$,
		\begin{equation*}  \| \Phdiv (\phi) \|_{L^2(D; \R^d)} \le C(d) \| \phi \|_{L^2(D; \R^d)}.
		\end{equation*}
		\item \label{lemma:Phdivprops:Linfbd} If $\phi \in C^2(D; \R^d)$ has $\div(\phi) \equiv 0$, and $\phi^h \coloneqq \Phdiv(\phi)$, then
		\begin{align*}
		\begin{split}
		\lim_{h\to 0^+} \max_{\bi \in I}\| \phi^h_\bi - \phi(x_\bi) \|_{\infty} = 0, \\
		\lim_{h \to 0^+} \max_{\bi \in I} \left\| \frac{\phi_{\bi+\bem}^h - \phi_\bi^h}{h} - \partial_{x_m} \phi(x_\bi) \right\|_\infty = 0.
		\end{split}
		\end{align*}	
		
		\item $\Phdiv$ preserves time-regularity. That is, if $\phi \in L^\infty([0, T] \times D, \R^d)$ is such that $\phi(\cdot, x) \in C^1([0, T], \R^d)$ for a.e. $x$, then the function $\tilde{\phi}(t, \cdot) \coloneqq \Phdiv \circ \phi(t, \cdot)$ also has $\tilde{\phi}(\cdot, x) \in C^1([0, T], \R^d)$ for all $x$.
	\end{enumerate}
\end{lemma}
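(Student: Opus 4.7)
The proof strategy exploits the factorization $\Phdiv = \bP \circ A^h$, with the main work lying in the analysis of the discrete Poisson problem $\laplh \psi = \divh A^h \phi$ implicit in the definition of $\bP$. I would dispatch \ref{lemma:Phdivprops:L2bd} and the final item first: for the former, cell averaging is an $L^2$-contraction by Jensen's inequality and $\bP$ is an orthogonal projection onto $\Ghdiv$, so in fact $C(d)=1$ suffices; for the latter, $\Phdiv$ is a fixed linear map on cell averages, so $\tilde{\phi}(t,x_\bi)$ is a finite linear combination (with $t$-independent, $h$-dependent coefficients) of $\fint_{C_\bj}\phi(t,y)\,dy$, and differentiation under the integral sign is justified by dominated convergence from the $L^\infty$ hypothesis on $\phi$.

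For \ref{lemma:Phdivprops:smalldiv}, apply the triangle inequality $\|\phi - \Phdiv \phi\|_{L^2} \le \|\phi - A^h \phi\|_{L^2} + \|A^h \phi - \bP A^h \phi\|_{L^2}$. The first summand is $O(h)$ by a cellwise Taylor expansion, using $\phi \in C^2$. For the second, write $A^h\phi - \bP A^h\phi = \gradh \psi$ with $\laplh \psi = \divh A^h \phi$. Taylor-expanding around each cell centre and using $\div \phi \equiv 0$ gives
\[
(\divh A^h \phi)_\bi = (\div\phi)(x_\bi) + O(h) = O(h)
\]
uniformly in $\bi$, whence $\|\divh A^h\phi\|_{L^2} = O(h)$. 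Discrete integration by parts (Lemma \ref{lemma:adj}) gives $\|\gradh \psi\|_{L^2}^2 = -\sum_\bi \psi_\bi (\divh A^h\phi)_\bi$; by Cauchy--Schwarz and the discrete Poincaré inequality on the torus (applied to $\psi$, which has zero mean by solvability of the discrete Poisson problem) this yields $\|\gradh \psi\|_{L^2} \le C \|\divh A^h\phi\|_{L^2} = O(h)$.

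Part \ref{lemma:Phdivprops:Linfbd} is the delicate step. Writing $\phi^h_\bi - \phi(x_\bi) = \bigl((A^h\phi)_\bi - \phi(x_\bi)\bigr) - (\gradh\psi)_\bi$, the first term is $O(h^2)$ uniformly by the midpoint rule for $C^2$ functions, so it suffices to show $\|\gradh \psi\|_\infty \to 0$. This reduces to an $L^\infty$ regularity estimate for the discrete Poisson equation $\laplh \psi = g$ with $g = \divh A^h\phi$ satisfying $\|g\|_\infty = O(h)$ and zero mean. The continuous analogue is $\|\nabla u\|_\infty \le C\|g\|_{L^p}$, which holds for any $p>d$ by Sobolev--Morrey embedding applied to $u \in W^{2,p}$. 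Since $\laplh$ is a translation-invariant convolution operator on the periodic grid whose Fourier symbol vanishes only at the origin, discrete Calderón--Zygmund theory adapted to the stencil yields the analogous estimate with constants uniform in $h$, giving $\|\gradh \psi\|_\infty \le C|D|^{1/p}\|g\|_\infty = O(h) \to 0$. The second limit is handled along the same lines: the quantity $(\phi^h_{\bi+\bem} - \phi^h_\bi)/h - \partial_{x_m}\phi(x_\bi)$ splits as a $C^2$-Taylor error plus a second-order difference of $\psi$, the latter controlled by an $L^p$ estimate for discrete second differences (discrete Calderón--Zygmund, valid for $1<p<\infty$) combined with a discrete Sobolev embedding.

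The principal obstacle is therefore establishing the discrete $W^{1,\infty}$ and $W^{2,p}$ regularity for $\laplh$ on the periodic Cartesian grid with constants independent of $h$. While the Fourier-analytic strategy is standard for translation-invariant stencils on the torus, carrying it out carefully for the (wide) stencil used here is technical, which is why the detailed verification is deferred to the appendix of \cite{CPPThesis}.
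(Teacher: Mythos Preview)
The paper itself does not prove this lemma; immediately before the statement it says the proofs ``are fairly technical; we refer the interested reader to Appendix A in \cite{CPPThesis}.'' There is therefore no in-paper argument to compare against, and your sketch is to be judged on its own merits.

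Your outline is sound and follows the natural route: factor through $A^h$ and $\bP$, use Taylor analysis for the averaging error, and reduce everything about the projection correction to discrete elliptic regularity for $\laplh\psi=\divh A^h\phi$. Parts \ref{lemma:Phdivprops:L2bd} and IV are clean and correct as stated (indeed $C(d)=1$ works, as you note). Part \ref{lemma:Phdivprops:smalldiv} is also correct; one small wrinkle you should flag is that the wide (step-$2$) stencil in $\laplh$ makes the operator decouple into $2^d$ independent sublattices, so its kernel is $2^d$-dimensional rather than one-dimensional. The discrete Poincar\'e inequality and the Fourier-analytic regularity you invoke must be run sublattice by sublattice (equivalently, $\psi$ is normalized to zero mean on each sublattice), which is routine but needs to be said.

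For the second limit in \ref{lemma:Phdivprops:Linfbd} your final sentence is a bit loose. Calder\'on--Zygmund gives $\|D^2_h\psi\|_{L^p}\le C_p\|g\|_{L^p}$ only for $1<p<\infty$, and a Sobolev embedding of discrete $W^{2,p}$ does not control $\|D^2_h\psi\|_{L^\infty}$ directly. What closes the argument is the discrete inverse inequality $\|v\|_{L^\infty}\le C h^{-d/p}\|v\|_{L^p}$ applied to $v=D^2_h\psi$: combined with $\|g\|_{L^p}\lesssim \|g\|_{L^\infty}=O(h)$ this yields $\|D^2_h\psi\|_{L^\infty}=O(h^{1-d/p})\to 0$ for any fixed $p>d$. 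With that clarification (and the sublattice remark above) the strategy is complete, and you have correctly identified that the uniform-in-$h$ discrete elliptic estimates for the wide stencil are where the technical weight lies.
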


\section{A convergent finite volume scheme for statistical solutions of the incompressible Euler equations}
In this section, we present the main results of this work. We introduce a Monte Carlo scheme, built upon the deterministic finite volume scheme presented in the previous section; and we rigorously show that it converges to a statistical solution. This convergence is only under an external assumption; in Section \ref{sec:examples} we will show numerical evidence that this hypothesis is mild for cases of practical interest.

The Monte Carlo scheme is based on the well-known concept of \emph{ensemble simulations}. Although it has long been present in the literature, especially for numerical weather forecasting, e.g. (\cite{Epstein69,Leith74}), we follow the more recent presentation in \cite{FKMT17} of an analogous method for hyperbolic conservation laws.

The specific formulation of the scheme that we follow is that of \cite{LeonardiPhd,Leonardi18}; we also employ the numerical solver of the same author, \cite{luqness}. In the original presentation, the scheme is considered as an approximation method for measure-valued solutions. In this work, we prove that the scheme contains, in fact, sufficient information to converge to a statistical solution.

Throughout this section, we consider the initial value problem for the incompressible Euler equations in the sense of \emph{uncertainty quantification}: we consider initial data randomly distributed according to a probability measure $\barmu \in L^2(D; \R^d)$. For a deterministic initial condition, we will adopt the same approach as e.g. \cite{FMTActa} and introduce an arbitrary, small random perturbation to the initial datum, and thus view it as a probability measure; we will discuss this in more detail in Section \ref{sec:examples}.

\subsection{Notation}
Let $\Lpdiv(D) \subset L^p(D; \R^d)$ be the space of weakly divergence-free functions in $L^p$, i.e.
\[ \Lpdiv(D; \R^d) := \left\{ \bu \in L^p(D; \R^d) : \int_D \bu \cdot \nabla F = 0,\ \forall F \in C^\infty(D; \R)\right\}. \]

For simplicity of the exposition, we will henceforth assume a uniform Cartesian mesh in all directions, i.e., for all $m \in \{ 1, \dots, d\}$ and $\bi \in I$, $ x_{\bi + \bem} - x_{\bi} =: h$. Let $\barbu \in \Ldiv(D)$. For $h>0$, let 
\begin{equation*}
\bu^{h; 0} := \Phdiv \circ \barbu \in \Ghdiv
\end{equation*}
and for all $n \in \{1, \dots, N\}$, let $\bu^{h; n}$ be obtained with algorithm \eqref{eq:scheme1}-\eqref{eq:scheme2} with $\bu^{h;0}$ as initial datum.
Thus we can define the following piecewise linear in time evolution operator:
\begin{align}
\begin{split}
\label{eq:Sdef}
&\mathcal{S}^h : [0, T] \times \Ldiv(D) \to \Ghdiv \cap L^2(D; \R^d)\\
&\mathcal{S}^h(t, \barbu)(\cdot) := \sum_{n=0}^{N(h)-1}\left(\frac{t^{n+1}-t}{\Dt^n}\bu^{h;n}(\cdot) + \frac{t - t^n}{\Dt^n}\bu^{h; n+1}(\cdot)\right) \indic_{[t^n, t^{n+1})}(t).
\end{split}
\end{align}
For brevity we write $\mathcal{S}^h_t \barbu  \coloneqq \mathcal{S}^h(t, \barbu)$. 
Given an initial distribution ${\bar{\mu} \in \Prob(\Ldiv(D; \R^d))}$, for all $t \in [0,T]$ we can define a probability measure in $L^2(D; \R^d)$ at time $t$, via push-forward measure:
\begin{equation}
\mu^h_t := \mathcal{S}^h_t \# \bar{\mu} \label{eq:discrevol}.
\end{equation}

\subsection{The Monte Carlo algorithm}
We can now present the numerical algorithm that we will use in this work, which follows the FKMT algorithm in \cite{LyePhd,FKMT17}.

\begin{algorithm}
	\label{algo:fkmt}
	\emph{(FKMT algorithm for the incompressible Euler equations)}.
	
	Let $\bar{\mu} \in \Prob(L^2(D ; \R^d))$; and for a mesh width $h>0$, let $\mathcal{S}^h$ be the evolution operator as in \eqref{eq:Sdef}. Fix a number of samples $M$.
	\begin{enumerate}
		\item Generate $\barbu_1, \barbu_2, \dots, \barbu_M \in L^2(D; \R^d)$ independent random variables with distribution $\bar{\mu}$.
		\item For $m \in \{1, \dots, M\}$, evolve the sample in time, $\bu^h_m(t) := \mathcal{S}^h_t(\barbu_{m})$.
		\item Estimate the statistical solution by the \emph{empirical measure}
		\begin{equation*}
		\mu_t^{h, M} := \frac{1}{M} \sum_{m=1}^M \delta_{\bu^{h}_m(\cdot, t)}.
		\end{equation*}
	\end{enumerate}
\end{algorithm}

\subsection{Convergence of the initial data}
\begin{lemma}
	\label{lemma:initconv}
	Let $\mu \in \Prob(L^2(D; \mathbb{R}^d))$ with $\bu \in \Ldiv$ $\mu$-almost surely. For $h>0$, define $\mu^h := \Phdiv \# \mu$.
	
	For all $G\in C^1(D\times \R^d; \R^k)$, 
	\begin{equation*}\lim_{h \to 0}
	\left\| \int_{L^2(D; \mathbb{R}^k)} \int_{D} G(x, \bu(x)) \,dx\,d\mu(\bu) - \int_{L^2(D; \mathbb{R}^k)} \int_D G(x, \bu(x)) \,dx \,d\mu^h(\bu)  \right\|_2 = 0.
	\end{equation*}
\end{lemma}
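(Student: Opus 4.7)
My plan is to reduce the statement via the pushforward, then pass to the limit inside the integrals by two nested applications of dominated convergence (first in $x \in D$, then in $\bu \sim \mu$), after establishing the key pointwise-in-$\bu$ fact that $\Phdiv \bu \to \bu$ in $L^2$ for each $\bu \in \Ldiv$.

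First, by definition of the pushforward,
\[
\int_{L^2(D;\R^k)} \int_D G(x, \bu(x))\,dx\,d\mu^h(\bu) = \int_{L^2(D;\R^k)} \int_D G(x, \Phdiv \bu(x))\,dx\,d\mu(\bu),
\]
so the quantity to bound is $\bigl\| \int_{L^2} \Phi_h(\bu)\,d\mu(\bu) \bigr\|_2$ with $\Phi_h(\bu) := \int_D [G(x, \bu(x)) - G(x, \Phdiv \bu(x))]\,dx$; by linearity in the codomain it suffices to treat scalar-valued $G$.

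Second, I would establish $\|\bu - \Phdiv \bu\|_{L^2} \to 0$ for every fixed $\bu \in \Ldiv$. For smooth divergence-free $\phi \in C^2$, Lemma \ref{lemma:Phdivprops}.\ref{lemma:Phdivprops:smalldiv} gives $\|\phi - \Phdiv \phi\|_{L^2} \le C(\phi)h$. For general $\bu \in \Ldiv$, a standard three-$\varepsilon$ argument using a smooth divergence-free approximation $\phi_n$ with $\|\bu - \phi_n\|_{L^2} < \varepsilon$ (classical on the torus, via mollification preserving $\div \equiv 0$), combined with linearity of $\Phdiv$ and its $L^2$-boundedness from Lemma \ref{lemma:Phdivprops}.\ref{lemma:Phdivprops:L2bd}, yields
\[
\|\bu - \Phdiv \bu\|_{L^2} \;\le\; \|\bu - \phi_n\|_{L^2} + \|\phi_n - \Phdiv \phi_n\|_{L^2} + \|\Phdiv(\phi_n - \bu)\|_{L^2} \;\le\; (1+C(d))\,\varepsilon + C(\phi_n)\,h,
\]
which vanishes by first sending $h \to 0$ and then $\varepsilon \to 0$.

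Third, fix an arbitrary sequence $h_n \to 0$ and extract a subsequence along which $\Phdiv \bu \to \bu$ pointwise a.e.\ in $D$; continuity of $G$ yields $G(x, \Phdiv \bu(x)) \to G(x, \bu(x))$ a.e. To upgrade to an $L^1_x$ limit, I would split $D$ at the level set $A_R(\bu) := \{x : |\bu(x)| \le R\}$, use uniform continuity of $G$ on the compact $D \times \overline{B_{2R}(0)}$ on $A_R$ together with the $L^2$ convergence above, and use Chebyshev $\mathcal{L}(D \setminus A_R) \le R^{-2}\|\bu\|_{L^2}^2$ to make the exceptional set small. This gives $\Phi_{h_n}(\bu) \to 0$ for $\mu$-a.e.\ $\bu$. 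A second application of dominated convergence in $\mu$, with envelope $|\Phi_h(\bu)| \le c(1 + \|\bu\|_{L^2}^2)$ coming from the mean value theorem, Cauchy--Schwarz, and Lemma \ref{lemma:Phdivprops}.\ref{lemma:Phdivprops:L2bd}, then transfers the limit to the outer integral; this envelope is $\mu$-integrable because $\mu \in \Prob(L^2)$. Arbitrariness of the sequence $h_n$ closes the argument.

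The main obstacle is precisely this integrability step: the statement as written imposes no growth hypothesis on $G$, while $\mu \in \Prob(L^2)$ supplies only a second moment. The argument closes cleanly under the natural downstream-compatible growth assumption $|G(x,\xi)| + |\xi|^{-1}|\nabla_\xi G(x,\xi)| \le c(1+|\xi|)$ (which is what the moment identities and continuity relations actually require in the sequel); for fully general $C^1$ observables one must either strengthen the hypothesis or handle the tails via truncation plus uniform continuity on compacta, which is technically more involved but essentially routine.
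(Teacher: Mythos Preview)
Your proposal is correct and follows the same skeleton as the paper --- rewrite via the pushforward, then reduce everything to $\|\bu - \Phdiv \bu\|_{L^2(D;\R^d)} \to 0$ for $\mu$-a.e.\ $\bu$ --- but the two arguments diverge in how the reduction is carried out. The paper is much more compressed: it applies the mean value inequality pointwise in $x$, bounding the integrand by $\|\nabla_\bu G\|_{L^\infty(D\times\R^d)}\,\|\bu(x)-\Phdiv\bu(x)\|$, and then a single Cauchy--Schwarz in $x$ yields the estimate $(\L(D))^{1/2}\|\nabla_\bu G\|_{L^\infty}\int_{L^2}\|\bu-\Phdiv\bu\|_{L^2}\,d\mu(\bu)$ directly, without any subsequence extraction or level-set splitting. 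Your third step is therefore more elaborate than needed if one is willing to assume $\nabla_\bu G$ bounded, which the paper tacitly does; conversely, your version has the virtue of making that hidden hypothesis explicit and sketching how to manage unbounded $G$. Your density argument in the second step (smooth approximation plus the $L^2$-boundedness of $\Phdiv$) is also more careful than the paper, which simply cites Lemma~\ref{lemma:Phdivprops}.\ref{lemma:Phdivprops:smalldiv} for the convergence even though that lemma is stated only for $C^2$ data; your three-$\varepsilon$ argument is exactly the standard way to close that gap.
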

\begin{proof}
	$\mu^h$ is well-defined, as $\Phdiv$ is measurable by construction. Furthermore,
	\begin{align*}
	&\left\| \int_{L^2(D; \mathbb{R}^k)} \int_{D} G(x,\bu(x)) \,dx\,d\mu(\bu) - \int_{L^2(D; \mathbb{R}^k)} \int_{D} G(x, \bu(x)) \,dx \,d\mu^h(\bu) \right\|_2 \\
	&\le (\L(D))^\frac{1}{2} \| \nabla_\bu G\|_{L^\infty(D; \R^{k\times d})} \int_{L^2(D; \mathbb{R}^d)} \left\| \bu - \Phdiv\circ  \bu \right\|_{L^2(D; \R^d)}\,d\mu(\bu).
	\end{align*}
	
	Through Lemma \ref{lemma:Phdivprops}.\ref{lemma:Phdivprops:smalldiv}, $\mu$-a.s., $\left\| \bu - \Phdiv\circ  \bu \right\|_{L^2(D; \R^d)} \to 0$ as $h \to 0$, and the conclusion is immediate.
	
\end{proof}

\subsection{Time-regularity}
\begin{lemma}
	\label{lemma:uniformtimecont}
	Let $\bar{\mu} \in \Prob(L^2(D; \R^d))$ with $\bu \in \Ldiv$ $\bar{\mu}$-a.s, and such that there exists $R>0$ with $\supp(\bar{\mu}) \subset B_R(0)$, the ball of radius $R$ and center $0$ in $L^p(D; \mathbb{R}^d)$. Let $\{\mu^h\}_{h>0}$ be a family of time-parameterized probability measures generated by \eqref{eq:discrevol}, and assume $\Delta t^n = O(h)$. Then this family is uniformly time-regular.
\end{lemma}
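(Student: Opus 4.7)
The plan is to take the natural transport plan
$\pi^h_{s,t} \coloneqq (\mathcal{S}^h_s, \mathcal{S}^h_t)_\# \bar{\mu}$, which
automatically couples $\mu^h_s$ and $\mu^h_t$. A change of variables reduces
\eqref{eq:timeregulartransport} to the sample-wise bound
\[
\|\mathcal{S}^h_s \barbu - \mathcal{S}^h_t \barbu\|_{H_x^{-L}} \le C\,|t-s|
\qquad\text{for $\bar{\mu}$-a.e.\ $\barbu$,}
\]
with $L \in \N$ and $C>0$ depending only on $R$, $d$ and $\epsilon$. Since
$\mathcal{S}^h$ is piecewise linear in $t$, telescoping further reduces matters to the
one-step estimate $\|\bu^{h;n+1} - \bu^{h;n}\|_{H_x^{-L}} \le C\,\Dt^n$, uniform
in $n$, $h$ and in the realization $\barbu$.

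For the one-step estimate I would test against $\phi \in H^L(D;\R^d)$ and exploit
the update $\bu^{n+1} - \bu^n = \Dt^n\,\bP\bigl(-\bC(\bu^n,\bbu) + \bD(\bbu)\bigr)$.
Because $\bu^{n+1}-\bu^n \in \Ghdiv$, the Helmholtz-type splitting
$A^h\phi = \Phdiv\phi + \gradh\chi$ (with $\laplh\chi = \divh A^h\phi$)
together with Lemma~\ref{lemma:adj} annihilates the gradient contribution and leaves
\[
\int_D (\bu^{n+1}-\bu^n)\cdot\phi\,dx
= \Dt^n\,\langle -\bC(\bu^n,\bbu) + \bD(\bbu),\, \Phdiv\phi \rangle.
\]
Since $\bC$ and $\bD$ are themselves discrete divergences of flux tensors whose
discrete $L^1$ norms are bounded by $\|\bu^n\|_{L^2_x}\|\bbu\|_{L^2_x}$ and
$\epsilon\,\|\bbu\|_{L^2_x}^2$ respectively, a second summation by parts transfers
the discrete difference onto $\Phdiv\phi$, yielding
\[
\Bigl|\int_D (\bu^{n+1}-\bu^n)\cdot\phi\,dx\Bigr|
\le C\,\Dt^n\bigl(\|\bu^n\|_{L^2_x}\|\bbu\|_{L^2_x} + \epsilon\,\|\bbu\|_{L^2_x}^2\bigr)\,
\|\nabla_h \Phdiv\phi\|_{L^\infty_x}.
\]

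Two uniform bounds then close the argument. First, the $L^2$-stability of the
scheme (Lemma~\ref{lemma:schemeprops}.\ref{lemma:schemeprops:L2}, applied via
the standard Chorin-type energy identity which also controls the intermediate
$\bus^{,n+1}$) together with
Lemma~\ref{lemma:Phdivprops}.\ref{lemma:Phdivprops:L2bd} give
$\|\bu^n\|_{L^2_x},\;\|\bbu\|_{L^2_x}\le C(d)\,R$ uniformly in $n$ and $h$.
Second, one must establish the uniform estimate
$\|\nabla_h \Phdiv\phi\|_{L^\infty_x} \le C\,\|\phi\|_{H_x^L}$ for $L>d/2+2$:
the cell-average part $\nabla_h A^h\phi$ is pointwise bounded by
$\|\nabla\phi\|_{L^\infty_x}$, while the Leray correction $\gradh\chi$ requires
discrete elliptic regularity on the periodic Cartesian mesh followed by a
uniform-in-$h$ discrete Sobolev embedding. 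Choosing $L = \lceil d/2\rceil+3$
and integrating in $\bar{\mu}$ then produces \eqref{eq:timeregulartransport}
with $L$ and $C$ independent of $h$, which is uniform time-regularity.

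The principal obstacle is this last ingredient. Because $\bP$ is non-local, a
pointwise estimate on $\nabla_h\Phdiv\phi$ is unavailable, so one is forced to
invoke uniform-in-$h$ discrete elliptic regularity for the Leray solve, together
with a discrete Sobolev embedding that remains stable as $h\to 0$; neither is
contained in the lemmas recorded above and both must be established separately
(as in Appendix~A of \cite{CPPThesis}). Every other step---the choice of transport
plan, the Helmholtz splitting, the summation by parts, and the uniform
$L^2$-stability of the iterates---is essentially routine.
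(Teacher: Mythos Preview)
Your proposal is correct and follows essentially the same route as the paper: the coupling $\pi^h_{s,t}=(\mathcal{S}^h_s,\mathcal{S}^h_t)_\#\bar\mu$, reduction to the sample-wise estimate, telescoping over time steps, testing the scheme against $\phi\in H^L$, summation by parts on $\bC$ and $\bD$, annihilation of the pressure via Lemma~\ref{lemma:adj}, and finally Sobolev embedding $H^L\hookrightarrow W^{1,\infty}$.

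The one tactical difference worth pointing out is how the test function is treated. You project $\phi$ through $\Phdiv$ from the start and are then forced to bound $\|\nabla_h\Phdiv\phi\|_{L^\infty}$ uniformly in $h$, which you rightly flag as requiring discrete elliptic regularity for the Leray solve. The paper instead first uses the \emph{continuous} Helmholtz decomposition to reduce to divergence-free $\Phi$, then invokes density of $C^\infty$ in $H^L$ (with the Sobolev embedding guaranteeing uniform $C^2$ control when $\|\Phi\|_{H^L}\le 1$), and pairs the flux terms $\bC,\bD$ against the \emph{cell averages} $\bar\Phi=A^h\Phi$ rather than $\Phdiv\Phi$. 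The discrete gradient of cell averages is bounded by $\|\nabla\Phi\|_{L^\infty}$ by the mean-value theorem alone, so no elliptic machinery is needed there; the projected test function $\Phdiv\Phi$ is used only for the pressure term, where Lemma~\ref{lemma:adj} kills it exactly, and the cost of switching between $\bar\Phi$ and $\Phdiv\Phi$ is absorbed via Lemma~\ref{lemma:Phdivprops}.\ref{lemma:Phdivprops:smalldiv}. This sidesteps the obstacle you identify, at the price of slightly more bookkeeping of vanishing error terms. Your route is cleaner structurally but does lean on the appendix-level estimates you mention; the paper's route is more elementary once one accepts the density reduction.
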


\begin{proof} Fix $h>0$, and let $s,t \in [0, T]$ with $s < t$. Trivially a transport map between $\mu^h_t$ and $\mu^h_s$ exists.
	
	We now verify that there exist constants $C>0$, and $L \in \N$, independent of $h$, such that for any realization of the initial data $\bar{\mu}$, denoted by element $\omega$, 
	\[ \|\bu(\omega; t, \cdot)-\bu(\omega; s, \cdot)\|_{H_x^{-L}} \le C|t-s| \]
	which obviously implies eq. \eqref{eq:timeregulartransport}, $\int_{L^2_x \times L^2_x} \|\bm{w}-\bv\|_{H_x^{-L}} d\pi_{s,t}(\bm{w},\bv) \le C|t-s|$.
	
	We omit the parameter $\omega$ in the rest of this proof. Let $k\ge 2$ and let $\Phi \in H^k(D; \R^d)$. We will assume $\Phi$ is divergence-free in the sequel. If it is not, a straightforward argument of Helmholtz decomposition guarantees that the integral involving the gradient term is vanishingly small.
	For convenience, let us assume there exist $\tilde{n}$, $\tilde{m}$ such that $s = t^{\tilde{n}}$, $t = t^{\tilde{m}}$; the proof below only needs a minor, trivial modification otherwise. Then
	\begin{align*}
	&\left| \int_D \left( \bu(x, t) - \bu(x, s) \right) \cdot \Phi(x) \,dx \right| = \left| \sum_{n=\tilde{n}}^{\tilde{m}-1}  \int_D \int_{t^n}^{t^{n+1}} \partial_t \bu(x, \tau) \cdot \Phi(x) \,d\tau\,dx \right|.
	\end{align*}
	
	For any time interval $(t^n, t^{n+1})$, $\partial_t \bu$ is ($\bar{\mu}$-a.s.) well defined, and in fact has a constant value of $\frac{1}{\Delta t^n} (\bu^{n+1} - \bu^n)$. Therefore, by definition of the numerical scheme \eqref{eq:scheme1}-\eqref{eq:scheme2}, 
	
	\begin{align*}
	&\int_D \int_{t^n}^{t^{n+1}} \partial_t \bu(x, \tau) \cdot \Phi(x) \,d\tau = \Delta t^n \int_D \left[ -\bm{C}^h(\bu^n, \bbu) + \bm{D}^h(\bbu) - \gradh \psi^{h; n} \right] \cdot \Phi(x)\,dx
	\end{align*}
	
	Due to the density of smooth functions in $H^2$, we can assume $\Phi \in C^\infty$; otherwise the difference can be made arbitrarily small, and the $L^2$ bound in Lemma \ref{lemma:Phdivprops}.\ref{lemma:Phdivprops:Linfbd} allows one to disregard it. Furthermore, if $\Phi$ is smooth, the difference between its cell averages $\bar{\Phi}$ and $\Phi$ vanishes uniformly as $h \to 0$; and as $\Phi$ is divergence-free, \ref{lemma:Phdivprops}.\ref{lemma:Phdivprops:smalldiv} gives that $\|\Phi - \Phdiv \circ \Phi\|_{L^2}$ also vanishes uniformly.
	
	With an easy argument based on the Lipschitz-continuity of the fluxes $\bFm$ in $\bC$, $\int_D \bm{C}^h(\bu^n, \bbu) \cdot \bar{\Phi}(x) \,dx$ can be bounded, up to an additive error term that vanishes as $h \to 0$, by a term of the form
		\[ \sumbi h^d \sum_{m=1}^d (u^n_\bi)_m \bu_\bi^n \cdot \frac{\bar{\Phi}_{\bi+\bem} - \bar{\Phi}_{\bi}}{h} \]
		which in turn can be bounded by $\|\bu^n\|_{L^2}^2 \|\nabla \Phi\|_{L^\infty}$ by smoothness of $\Phi$.
		
		An analogous idea (cf. Lemma \ref{lemma:schemeprops}.\ref{lemma:schemeprops:D}) lets us transfer one discrete derivative from $\bD^h$ to $\bar{\Phi}$, obtaining a similar bound of the type $\|\bu^n\|_{L^2} \|\nabla \Phi\|_{L^\infty}$ for the term $\int_D \bm{D}^h(\bbu) \cdot \Phi(x) \,dx$. The term $\int_D \gradh \psi^{h; n}  \cdot \Phi^h(x) \,dx$ is identically zero through Lemma \ref{lemma:adj}.
	
	Now, choosing $2 \le L \in \N$ large enough that, by Sobolev's inequality, $H^L(D; \R^d)$ continuously embeds in $W^{1, \infty}(D; \R^d)$, we can bound the $L^\infty$ norms above by Sobolev norms. As $\|\bu\|_{L^2} \le R$, then, we can conclude that
	\begin{align*}
	\left| \int_D \left( \bu(x, t) - \bu(x, s) \right) \cdot \Phi(x) \,dx \right| \le C'(d) (t-s) R^2 \|\Phi\|_{H^L},
	\end{align*}
	and thus
	\[ \|\bu(\omega; t, \cdot)-\bu(\omega; s, \cdot)\|_{H_x^{-L}} \le R^2 C'(d)|t-s|. \]
	
\end{proof}

\begin{theorem}
	\label{thm:relativeconv}
	Fix $p \in [1, \infty)$. Let $\bar{\mu} \in \Prob(L^p(D; \mathbb{R}^d))$ represent a set of stochastic initial data for the incompressible Euler equations, with $\bu \in \Lpdiv$ $\bar{\mu}$-almost surely. Assume that there exists $R>0$ with $\supp(\bar{\mu}) \subset B_R(0)$, the ball of radius $R$ and center $0$ in $L^p(D; \mathbb{R}^d)$.
	For $h>0$, let $\mu^h_t = \mathcal{S}^h_t \# \mu$ as in eq. \eqref{eq:discrevol}. Assume furthermore that the discrete projection scheme \eqref{eq:scheme1}-\eqref{eq:scheme2} satisfies the following properties:
	\begin{itemize}
		\item $L^p$ bounds: $\exists C>0$ such that
		\begin{equation}
		\label{hyp:Lp}
		h^d \sum_{\bi \in I} \|\bu_\bi^{h; n} \|_2^p \le C h^d \sum_{\bi \in I} \| \bu^{h; 0} \|_2^p
		\end{equation}
		for all $n \in \{0, 1, \dots, N\}$.
		
		\item For all $r \leq h$, there exist $\alpha,\,C>0$ (independent of $h$) such that			
		\begin{equation}
		\label{hyp:unifbd}S^p_r (\mu^{h}, T) \leq C r^{\alpha},
		\end{equation}
		with $S^p_r(\mu, T)$ the time-averaged structure function
		\begin{equation*}
		S^p_r(\mu,T) := \left( \int_0^T \int_{L^p(D; \R^d)} \int_D \fint_{B_r(x)} \| \bu(x) - \bu(y)\|_2^p \,dy \,dx\,d\mu_{t}(\bu)\,dt \right)^\frac{1}{p}.
		\end{equation*}
		
		\item Scaling assumption: for all $\ell > 1$, there exist $\bar{C}>0$, $\beta \leq \alpha$ (independent of $h$) such that 			
		\begin{equation}
		\label{hyp:scaling} S^p_{\ell r} (\mu^h, T) \leq \bar{C} \ell^{\beta} S^p_{r} (\mu^ h, T).
		\end{equation} 		
	\end{itemize}
	Then there exists a subsequence $h' \to 0$ such that the approximate statistical solutions $\mu^{h'}$ converge strongly to some $\mu \in {L^1_t(\Prob)}$, in the sense of Theorem \ref{thm:compactness}.
\end{theorem}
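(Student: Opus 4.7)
The plan is to verify the three hypotheses of Theorem \ref{thm:compactness} for the family $\{\mu^h_t\}_{h>0}$ generated by \eqref{eq:discrevol}: uniform time-regularity, concentration on a fixed $L^2$-ball, and a uniform vanishing bound on $S_r^2(\bnu^h, T)^2$. The first is handed to us by Lemma \ref{lemma:uniformtimecont}. For the second, since $D$ has finite measure, any $L^p$-ball $B_R(0)$ with $p\ge 2$ embeds in an $L^2$-ball; Lemma \ref{lemma:Phdivprops}.\ref{lemma:Phdivprops:L2bd} bounds $\|\bu^{h;0}\|_{L^2}$ in terms of $\|\barbu\|_{L^2}$, and the energy stability in Lemma \ref{lemma:schemeprops}.\ref{lemma:schemeprops:L2} propagates this bound to every time step. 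Piecewise-linear interpolation in time preserves it, so there is $R''>0$ independent of $h$ and $t$ with $\mu^h_t(B_{R''}(0))=1$.

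The main technical step is the structure-function estimate, which must hold for every $r>0$ uniformly in $h$, whereas hypothesis \eqref{hyp:unifbd} controls $S_r^p(\mu^h,T)$ only for $r\le h$. The bootstrap to $r>h$ uses the scaling hypothesis \eqref{hyp:scaling}: setting $\ell=r/h>1$ and composing,
\[
S_r^p(\mu^h,T) \;\le\; \bar C\,\ell^\beta\, S_h^p(\mu^h,T)\;\le\; C\bar C\, r^\beta\, h^{\alpha-\beta}.
\]
Because $\beta\le \alpha$ and we may assume $h\le 1$, this yields $S_r^p(\mu^h,T)\le C_1 r^\beta$ for every $r\in(0,T]$, uniformly in $h$; for $r\le h$ the same bound follows directly (and more sharply) from \eqref{hyp:unifbd}. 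To convert to the $L^2$ structure function required by Theorem \ref{thm:compactness}, I would apply Jensen's inequality to the inner average $\fint_{B_r(x)}$ (valid because $p\ge 2$) and Hölder's inequality on the outer probability measure over $[0,T]\times D\times L^p$, obtaining $S_r^2(\mu^h,T)\le C_2 S_r^p(\mu^h,T)\le C_3 r^\beta$. Taking $\Upsilon(r):=C_3^2 r^{2\beta}$ provides the required modulus with $\lim_{r\to 0^+}\Upsilon(r)=0$.

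With all three hypotheses in place, Theorem \ref{thm:compactness} delivers a subsequence $h_j\to 0$ and a limit $\mu\in L^1_t(\Prob)$ with $\int_0^T W_1(\mu^{h_j}_t,\mu_t)\,dt\to 0$, which is precisely the claimed strong convergence. The principal obstacle is the scaling bootstrap: one has direct control on the structure function only at the grid scale and below, and must extend it to all scales while ensuring that the prefactor $h^{\alpha-\beta}$ produced by combining \eqref{hyp:unifbd} at radius $h$ with \eqref{hyp:scaling} remains uniformly bounded as $h\to 0$. This is exactly where the relation $\beta\le\alpha$ built into the scaling hypothesis is essential; the remainder of the argument is a straightforward orchestration of the already-established lemmas.
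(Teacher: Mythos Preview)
Your proposal is correct and follows essentially the same route as the paper: invoke Lemma~\ref{lemma:uniformtimecont} for uniform time-regularity, establish concentration on a fixed ball, and then use the scaling bootstrap $S^p_r(\mu^{h},T)\le \bar C\,(r/h)^\beta S^p_h(\mu^{h},T)\le C' r^\beta h^{\alpha-\beta}\le C'' r^\beta$ to extend the structure-function control from $r\le h$ to all $r$, exactly as the paper does.

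Two minor deviations are worth noting. First, for the ball-concentration you bypass hypothesis~\eqref{hyp:Lp} entirely and instead use the scheme's $L^2$ energy stability (Lemma~\ref{lemma:schemeprops}.\ref{lemma:schemeprops:L2}) together with Lemma~\ref{lemma:Phdivprops}.\ref{lemma:Phdivprops:L2bd}; the paper uses~\eqref{hyp:Lp} directly (via Lemma~\ref{lemma:initconv}). Your route is arguably cleaner since it does not appeal to an assumed bound that is in fact already proven for $p=2$. Second, you make explicit the passage from $S^p_r$ to the $S^2_r$ required by Theorem~\ref{thm:compactness}, at the cost of restricting to $p\ge 2$; the paper's proof simply writes the diagonal-continuity claim with exponent $p$ and does not address this conversion, so you are in fact being more careful here, and the restriction you flag is genuine rather than an artifact of your argument.
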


\begin{proof}
	Let us verify that the conditions of Theorem \ref{thm:compactness} are met; hence a converging subsequence exists. Uniform time-regularity is immediate from Lemma \ref{lemma:uniformtimecont}. 
	
	Let $\{h_k\}_{k\in \mathbb{N}} \subset \mathbb{R}$, with $\{h_k\}\searrow 0$. Let us denote $\bnu_k = (\nu^1_k, \nu^2_k, \dots)$ the time-parameterized correlation measure associated with $\mu^{h_k} = \mathcal{S}^{h_k}_t\# \mu$ (in the sense of Theorem \ref{thm:duality}). $L^p$-boundedness is immediate: since $\supp(\bar{\mu}) \subset B_R(0)$, we have
	\begin{align}
	\sup\limits_{k \to \infty} &\sup\limits_{t \in [0, T)} \int_D \langle \nu^1_{k;t,x}, \|\xi\|_2^p \rangle ~dx \nonumber \\
	&= \sup\limits_{k \to \infty} \max\limits_{0 \le n \le N(k)}  \int_{L^p(D; \R^d)} \sum_{\bi \in I} h_k^d\|\bu(x_\bi) \|_2^p ~d\mu_{t^n}^{h_k}(\bu) \nonumber  \\
	&\stackrel{\eqref{hyp:Lp}}{\le} \sup\limits_{k \to \infty} C \int_{L^p(D; \R^d)} \sum_{\bi \in I} h_k^d \|\bu(x_\bi)\|_2^p ~d\mu_0^{h_k}(\bu) \nonumber  \\
	& \le \sup\limits_{k \to \infty} C' \int_{L^p(D; \R^d)} \|\bu\|_{L^p(D; \R^d)}^p ~d\barmu(\bu) \le C' R^p < \infty \label{eq:useinitconv}
	\end{align}
	where the inequality that begins line \eqref{eq:useinitconv} is justified by Lemma \ref{lemma:initconv}, taking $G$ to be the identity function.

	For the second property in Theorem \ref{thm:compactness}, we need to show that
	\begin{equation}
	\label{eq:bvclaim}
	\lim\limits_{r \to 0} \limsup\limits_{k \to \infty} \int_0^{T'} \int_D \fint_{B_r(x)} \langle \nu_{k; t, x,y}^2, \|\xi_1 - \xi_2\|^p\rangle \,dy\,dx\,dt = 0,
	\end{equation}
	for all $T' \in [0, T)$. Choose $k \in \N$. For $r \le h_k$, from hypothesis \eqref{hyp:unifbd}, this is immediate. For $r > h_k$,
	choose $\ell > 1$ and $h < h_k$ such that $r = \ell h$.
	\[  S^p_{r}(\mu^{h_k}, T) = S^p_{\ell h}(\mu^{h_k}, T)  \stackrel{\eqref{hyp:scaling}}{\le} C \ell^{\beta}S^p_{h} (\mu^{h_k}, T) \stackrel{\eqref{hyp:unifbd}}{\le} C'' \ell^{\beta} h^{\alpha} \le C''' r^\beta, \]
	where the bound is independent of $k$. Hence, through claim \eqref{eq:bvclaim}, Theorem \ref{thm:compactness} guarantees convergence up to a subsequence to some time-parameterized correlation measure $\mu$.
\end{proof}

\begin{remark}
	Comparing with Lemma \ref{lemma:schemeprops}.\ref{lemma:schemeprops:L2}, hypothesis \eqref{hyp:Lp} holds (with $p=2$) for scheme \eqref{eq:scheme1}-\eqref{eq:scheme2}. Furthermore, it is not difficult to see, cf. \cite{FLMWSystems}, Theorem 4.1 or \cite{CPPThesis}, Theorem 4.4, that the total variation-like property in Lemma \ref{lemma:schemeprops}.\ref{lemma:schemeprops:tv} implies hypothesis \eqref{hyp:unifbd}.
	
	The scaling assumption \eqref{hyp:scaling} is considerably more difficult, and there is little expectation of being able to derive it. We remark that this assumption is a discrete version of the scaling hypothesis in Kolmogorov's K41 theory (see e.g. Chapter 6 in \cite{Frisch}); which is a common assumption in turbulence models, widely considered to hold in practice. Thus, the hypothesis appears natural.
\end{remark}

\begin{remark}
	\label{rem:structsuffcond}
	If there exist $K>0$ and $\gamma \in (0,1]$ such that for all $h$, $$S^p_r (\mu^{h}, T) \leq K r^{\gamma},$$ then assumption \eqref{hyp:unifbd} holds with $\alpha \coloneqq \gamma$, and assumption \eqref{hyp:scaling} holds with $\beta \coloneqq \gamma$, immediately allowing one to conclude convergence in the sense of Theorem \ref{thm:compactness}. In Section \ref{sec:examples}, we will show how this (sufficient, not necessary) condition can be experimentally verified, and that it holds for all cases considered.
\end{remark}

\subsection{Convergence to a statistical solution}
In the previous section, we have shown that under external assumptions and up to a subsequence, the Monte Carlo scheme in Algorithm \ref{algo:fkmt} converges to a correlation measure $\mu$. In this section, we will prove a result akin to the Lax--Wendroff theorem, i.e. that said limit is a statistical solution to the incompressible Euler equations.

\begin{theorem}\label{thm:lxw}	
	 Let $\bar{\mu} \in \Prob(L^2(D; \mathbb{R}^d))$ with bounded support, $\supp(\bar{\mu})\subset B_R(0)\subset L^2(D,\mathbb{R}^d)$ for some $R>0$, with $\bu \in \Ldiv(D; \U)$ $\barmu$-almost surely. Let $\mu^{h}$ be given by \eqref{eq:discrevol} for $h>0$, and assume that for some sequence $h_l\to0$, the sequence $\{\mu^{h_l}\}_{l\in\N}$ converges strongly to $\mu \in L^1_t(\Prob)$, in the sense of Theorem \ref{thm:compactness}.\ref{prop:strongconv_time}. Assume furthermore that there exists $\lambda > 0$ such that for all $n$, it holds that $\frac{\Dt^n}{h} \le \lambda$.
	Then $\mu$ is a statistical solution of the incompressible Euler equations.
\end{theorem}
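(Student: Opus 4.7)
The plan is a Lax--Wendroff type argument: show that each approximation $\mu^h$ satisfies the statistical Euler identities of Definition \ref{def:statsol} up to $o(1)$ errors as $h \to 0$, and then pass to the limit using the strong convergence of admissible observables given by Theorem \ref{thm:compactness}.\ref{prop:strongconv_time}. Time-regularity of $\mu$ is immediate: the family $\{\mu^h\}$ is uniformly time-regular by Lemma \ref{lemma:uniformtimecont}, uniformly supported in $B_R(0)$ by energy-stability (Lemma \ref{lemma:schemeprops}.\ref{lemma:schemeprops:L2}), and converges strongly in $W_1$ by hypothesis, so Lemma \ref{lemma:limittimecontinuous} applies. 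The divergence-free condition \eqref{eq:statsol2} is handled first: each sample $\bu^h \in \Ghdiv$ satisfies $\int_\domain \bu^h \cdot \nabla\psi \, dx = O(h)\|\bu^h\|_{L^2} \le O(h)R$ for any smooth $\psi$, via Lemma \ref{lemma:adj} applied with cell-sampled $\psi$ combined with the consistency of $\gradh$ with $\nabla$. Squaring and integrating against $\mu^h_t$ yields
\begin{equation*}
\int_0^T \int_{L^2_x} \left(\int_\domain \bu \cdot \nabla \psi \, dx\right)^2 d\mu^h_t(\bu) \, dt = O(h^2) \to 0,
\end{equation*}
and the integrand corresponds to an admissible observable, so Theorem \ref{thm:compactness}.\ref{prop:strongconv_time} and nonnegativity of the integrand in $t$ deliver \eqref{eq:statsol2} for almost every $t$.

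For the main identity \eqref{eq:statsol1}, fix divergence-free $\phi_1, \ldots, \phi_k$ compactly supported in $[0, T) \times \domain$, and set $\phi^{h, i} \coloneqq \Phdiv(\phi_i)$. Testing \eqref{eq:scheme1}-\eqref{eq:scheme2} against the discretely-divergence-free $\phi^{h, i}(t^n)$ kills the projection-gradient term via Lemma \ref{lemma:adj}; discrete summation by parts in space together with Lemma \ref{lemma:Phdivprops}.\ref{lemma:Phdivprops:Linfbd} transfers discrete derivatives onto $\phi_i$, while Lemma \ref{lemma:schemeprops}.\ref{lemma:schemeprops:D} removes $\bD$ in the limit. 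This yields for every realization $\bar{\bu} \in \supp(\bar{\mu})$ and $A^{h, i}(t) \coloneqq \int_\domain \mathcal{S}^h_t(\bar{\bu}) \cdot \phi^{h, i}(t) \, dx$ a discrete weak identity
\begin{equation*}
A^{h, i}(t^{n+1}) - A^{h, i}(t^n) = \int_{t^n}^{t^{n+1}} \int_\domain \left[ \bu^h \cdot \partial_t \phi_i + (\bu^h \otimes \bu^h) \colon \nabla \phi_i \right] dx \, dt + o_n(h),
\end{equation*}
with cumulated error $o(1)$ uniform in $\bar{\bu} \in \supp(\bar{\mu})$ by Lemma \ref{lemma:schemeprops}.\ref{lemma:schemeprops:tv}. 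A discrete product rule applied to $P^h(t^n) \coloneqq \prod_i A^{h, i}(t^n)$ expands
\begin{equation*}
P^h(t^{n+1}) - P^h(t^n) = \sum_i \left[A^{h, i}(t^{n+1}) - A^{h, i}(t^n)\right] \prod_{j \neq i} A^{h, j}(t^n) + O((\Dt^n)^2),
\end{equation*}
whose quadratic remainder accumulates to $O(h)$ under the CFL restriction. Summing in $n$ (with $P^h(T) = 0$), integrating against $\bar{\mu}$, and reformulating via push-forwards \eqref{eq:discrevol} and the duality of Theorem \ref{thm:duality} gives
\begin{equation*}
\int_0^T \int_{\domain^k} \langle \nu^{h, k}_{t, \bx}, g(t, \bx, \cdot)\rangle \, d\bx \, dt + \int_{\domain^k} \langle \bar{\nu}^{h, k}_\bx, g_0(\bx, \cdot)\rangle \, d\bx = o(1),
\end{equation*}
where
\begin{align*}
g(t, \bx, \bm{\xi}) &\coloneqq \sum_i \prod_{j \neq i} (\xi_j \cdot \phi_j(t, x_j)) \left[\xi_i \cdot \partial_t \phi_i(t, x_i) + (\xi_i \otimes \xi_i) \colon \nabla_{x_i} \phi_i(t, x_i)\right], \\
g_0(\bx, \bm{\xi}) &\coloneqq \prod_i \xi_i \cdot \phi_i(0, x_i).
\end{align*}

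Both $g$ and $g_0$ are polynomials in $\bm{\xi}$ of degree at most $2$ in each $\xi_i$, smooth in $(t, \bx)$, so the growth and Lipschitz estimates of Theorem \ref{thm:compactness}.\ref{prop:strongconv_time} follow directly from $\|\xi\| + \|\xi\|^2 \le 2(1 + \|\xi\|^2)$. Applying strong convergence to $g$ and an iterated version of Lemma \ref{lemma:initconv} to $g_0$ passes the identity to the limit, giving \eqref{eq:statsol1} for the tensor product $\phi_1 \otimes \cdots \otimes \phi_k$. The main technical obstacle will be obtaining sample-independent control of both the discrete weak-form error and the quadratic product-rule remainder, so that uniform integration against $\bar{\mu}$ is justified; this is achieved by combining $L^2$-stability, TV-vanishing (Lemma \ref{lemma:schemeprops}.\ref{lemma:schemeprops:tv}), and the CFL bound, together with the consistency properties of $\Phdiv$.
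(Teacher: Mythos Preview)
Your proposal is correct and follows essentially the same Lax--Wendroff strategy as the paper: time-regularity via Lemmas \ref{lemma:uniformtimecont} and \ref{lemma:limittimecontinuous}, the incompressibility identity \eqref{eq:statsol2} via Lemma \ref{lemma:adj} and consistency of $\gradh$, and the momentum identity \eqref{eq:statsol1} by testing the scheme against $\Phdiv$-projected test functions, killing $\gradh\psi$ and $\bD$ with Lemmas \ref{lemma:adj} and \ref{lemma:schemeprops}.\ref{lemma:schemeprops:D}, reducing $\bC$ to the convective term via Lemma \ref{lemma:Phdivprops}.\ref{lemma:Phdivprops:Linfbd}, and passing to the limit with Theorem \ref{thm:compactness}.\ref{prop:strongconv_time}. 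The only cosmetic difference is that you organize the $k$-point identity via a discrete-in-time product rule on the scalars $A^{h,i}(t^n)$ with an explicit $O((\Dt^n)^2)$ remainder, whereas the paper exploits piecewise linearity of $t\mapsto\mathcal{S}^h_t\barbu$ to integrate by parts continuously in time and then applies the product rule to $\partial_t G$; the two computations are equivalent and invoke the same estimates.
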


\begin{proof}
	Let $\mu$ be the limit measure of $\mu^\hl$ as $l \to \infty$. 
	$\mu$ is time-regular, as an immediate consequence of the uniform time-continuity of the approximations (Lemma \ref{lemma:uniformtimecont}), the convergence in Wasserstein norm (result \eqref{eq:convW1} in Theorem \ref{thm:compactness}), and Lemma \ref{lemma:limittimecontinuous}. Thus, we need to verify that in the limit, properties 1 and 2 from Definition \ref{def:statsol} are fulfilled. 
	
	Let us first verify the weak form of the incompressibility equation, \eqref{eq:statsol2}. This is relatively straightforward. 
	Fix $t \in [0, T)$. By Theorem \ref{thm:compactness}.\ref{prop:strongconv_time}, we have strong convergence for this observable. That is,
	\begin{align*}
	&\int_{L^2(D; \R^d)} \int_{D^2} \left( \bu(x_1) \otimes \bu(x_2)\right) : (\nabla \psi(x_1) \otimes \nabla \psi(x_2))\,dx_1 dx_2 d\mu_t(\bu) \\
	& = \lim_{l\to\infty} \int_{L^2(D; \R^d)} \int_{D^2} \left( \bu(x_1) \otimes \bu(x_2)\right) : (\nabla \psi(x_1) \otimes \nabla \psi(x_2))\,dx_1 dx_2 d\mu_t^\hl(\bu) \\
	& = \lim_{l\to\infty} \int_{L^2(D; \R^d)} \int_{D^2} \left( \mathcal{S}_t^\hl \barbu(x_1) \otimes \mathcal{S}_t^\hl\barbu(x_2)\right) : (\nabla \psi(x_1) \otimes \nabla \psi(x_2))\,dx_1 dx_2 d\widebar{\mu}^\hl(\barbu) \\	
	&= \lim_{l\to\infty} \int_{L^2(D; \R^d)} \left(  \int_{D} \mathcal{S}_t^\hl \barbu(x) \cdot \nabla \psi(x) \,dx\right)^2  d\widebar{\mu}^\hl(\barbu) = 0.
	\end{align*}
	To justify the last equality, let $\bar{\psi}$ be the cell averages of $\psi$. Recall that $\mathcal{S}_t^\hl \barbu$ is $\bar{\mu}$-a.s. discretely divergence-free, and through Lemma \ref{lemma:adj}, $ \int_{D} \mathcal{S}_t^\hl \barbu(x) \cdot \gradh \bar{\psi}(x) \,dx = 0$.
	As $\psi$ is smooth, some trivial Taylor analysis shows that $\|\gradh \bar{\psi} - \nabla \psi \|_{L^\infty} \to 0$ uniformly, and thus the limit is zero.

	Let us now show property 1. Let $\phi_1, \phi_2, \dots, \phi_k \in C^\infty([0,T)\times D, \mathbb{R}^d)$ with $\nabla \cdot \phi_i = 0$ for all $i \in \{1, \dots, k\}$; we must verify that
	\begin{gather*} 
	\begin{aligned}
	\int_0^T  &\int_{L^2(D; \R^d)}\int_{D^k} \bigg\{ \left[ \bu(t,x_1) \otimes \bu(t,x_2) \otimes \cdots \otimes  \bu(t,x_k) \right] : \partial_t {\Phi} (t, \bx)
	\\
	&\qquad 
	+ \sum_{\alpha=1}^k \left[  \bu(t, x_1) \otimes \cdots (\bu(t, x_\alpha) \otimes \bu(t, x_\alpha)) \otimes  \cdots \otimes \bu(t, x_k) \right] : \nabla_{{x}_i} {\Phi}(t, \bx) \bigg\}
	~ d\bx  \, d\mu_t(\bu) \, dt
	\\
	& + \int_{L^2(D; \R^d)} \int_{D^k} \left[ \barbu(x_1) \otimes \cdots \otimes \barbu(x_k)\right] : {\Phi}(0,\bx) \, d\bx \,d\bar{\mu}(\barbu)
	=
	0,
	\end{aligned}
	\end{gather*}
	where
	\[ \bx := (x_1, x_2, \dots, x_k) \in D^k, \qquad \Phi(t, \bx) := \phi_1(t, x_1) \otimes \phi_2(t, x_2) \otimes \cdots \otimes \phi_k(t, x_k).\]
	
	
	Fix $l\in \N$. For $h_l$, $\Phldiv$ induces in $\Ghldiv$ the pushforward measure \[\bar{\mu}^\hl := \Phldiv \# \bar{\mu}.\]

	Let $\barbuhl \in \supp(\bar{\mu}^\hl)$ be a discretely divergence-free initial condition. For all $t \in [0, T]$, let
	\begin{equation*}
	\bu^\hl(t) := \mathcal{S}^\hl_t(\barbuhl);
	\end{equation*}
	 and for all $i \in \{1, \dots, k\}$, let $\phi^{h_l}_i(t) := \Phdiv \circ \phi_i(t, \cdot) \in \Ghdiv$. Lemma \ref{lemma:Phdivprops}.\ref{lemma:Phdivprops:smalldiv} guarantees that as $\hl\to 0$, $\|\phihl(t) - \phi(t, \cdot)\|_{L^2(D; \U)}$ uniformly tends to zero. Let $\Phihl$ be the spatially piecewise constant tensor function given, for all $\bj = (j_1, \dots, j_k)$, by
	\[\Phihl_\bj(t, \bx) := (\phihl_1)_{j_1} (t, x_1) (\phihl_2)_{j_2} (t, x_2) \cdots (\phihl_k)_{j_k} (t, x_k).\]
	
	Denote for convenience $G(\bu, \bx, t) := \bu(t, x_1) \otimes \bu(t, x_2) \otimes \cdots \otimes  \bu(t, x_k)$; also $\bar{\mu}^\hl$-a.s. piecewise constant in space. Then
	
	\begin{align}
	&\int_0^T \int_{L^2(D; \R^d)} \int_{D^k} G(\bu, \bx, t) : \partial_t {\Phihl} ~ d\bx \, d\mu_t^\hl(\bu)\, dt \nonumber \\
	&= \int_{L^2(D; \R^d)} \int_{D^k} \sum_{n=0}^{N(\hl)-1}\int_{t^n}^{t^{n+1}}  G(\mathcal{S}^\hl_t (\bu), \bx, t) : \partial_t {\Phihl} \, dt\, d\bx \, d\barmu^\hl(\bu). \label{eq:toibptime}
	\end{align}
	
	Note that for all $x \in D$, $\bu(\cdot, x)$ is $\barmu^\hl$-a.s. linear in $[t^n, t^{n+1}]$, so integrate by parts eq. \eqref{eq:toibptime} in time,	
	\begin{gather}
	\begin{aligned}
	\label{eq:summary1}
	0 = &\int_0^T \int_{L^2(D; \R^d)} \int_{D^k} G(\bu, \bx, t) : \partial_t {\Phihl}(t,x) ~ d\bx \, d\mu_t^\hl(\bu)\, dt \\
	&\qquad + \int_{L^2(D; \R^d)} \int_{D^k} \left[ \barbu(x_1) \otimes \cdots \otimes \barbu(x_k)\right] : {\Phihl}(0,\bx)
	~ d\bx \, d\bar{\mu}^\hl(\barbu) \\
	&\qquad + \int_{L^2(D; \R^d)} \int_{D^k} \sum_{n=0}^{N(\hl)-1}\int_{t^n}^{t^{n+1}}  \left[ \partial_t G(\mathcal{S}^\hl_t (\bu), \bx, t) \right] : \Phihl(t,\bx) \, dt\, d\bx \, d\barmu^\hl(\bu).
	\end{aligned}
	\end{gather}
	
	Observe now that, as $\bu^\hl(t, x) = \mathcal{S}^\hl_t \left( \barbu^\hl(x) \right)$,
	\begin{equation}
	\label{eq:dtuphi}
	\partial_t G(\mathcal{S}^\hl_t \circ \bu, \bx, t) : \Phihl(t, \bx)
	= \sum_{\alpha=1}^k \left(\partial_t \bu^\hl(t, x_\alpha) \cdot \phihl_\alpha(t, x_\alpha)\right) \left(\prod_{\substack{\beta=1 \\ \beta\neq \alpha}}^{k} \bu^\hl(t, x_\beta) \cdot \phihl_\beta(t, x_\beta)\right).
	\end{equation}
	
	For $t \in (t^n, t^{n+1})$ for any $n$, it holds $\mu^{h_l}_t$-a.s. that:
	\begin{equation*}
	\partial_t \bu^\hl(t) +  \bm{C}^\hl(\bu^{\hl;n}, \bhlbu) - \bD^\hl(\bhlbu) - \gradhl \psi^{\hl;n} = 0,
	\end{equation*}
	with all terms and operators as defined in Section \ref{sec:scheme}. The integral of eq. \eqref{eq:dtuphi} (as in \eqref{eq:summary1}) can be rewritten with some reordering as
	\begin{align*}
	&\int_{L^2(D; \R^d)} \int_{D^k} \sum_{n=0}^{N(\hl)-1}\int_{t^n}^{t^{n+1}}  \left[ \partial_t G(\mathcal{S}^\hl_t(\bu), \bx, t) \right] : \Phihl(t,\bx) \, dt\, d\bx \, d\barmu^\hl(\bu) \\
	&= \sum_{\alpha=1}^k \int_{L^2(D; \R^d)}  \sum_{n=0}^{N(h)-1} \int_{t^n}^{t^{n+1}}   \left[\sumbi h_l^d  \left(\bD^\hl(\bhlbu)_\bi + \gradhl \psi^{\hl;n}_\bi -\bm{C}^\hl(\bu^{\hl;n}, \bhlbu)_\bi \right) \cdot (\phihl_\alpha)_\bi(t)\right] \\
	&\hspace{10em} \prod_{\substack{\beta=1 \\ \beta\neq \alpha}}^k \left[ \sumbi \hl^d \bu^\hl_\bi(t) \cdot (\phihl_\beta)_\bi(t) \right] ~dt \, d\mu^\hl(\bu). 
	\end{align*}

	Let us now simplify the expression above. Due to the $L^2$ bound of the scheme, and the convergence in $L^\infty$ of $\phihl$ to $\phi$ (Lemma \ref{lemma:Phdivprops}.\ref{lemma:Phdivprops:Linfbd}), it is immediate that for all $\beta \in \{1, \dots, d\}$, and for all $t \in [0, T)$, uniformly,
	\[ \sumbi \hl^d \bu^\hl_\bi(t) \cdot (\phihl_\beta)_\bi(t) - \int_D \bu^\hl(t, x) \cdot \phi_\beta(t,x) \,dx \stackrel{l\to\infty}{\longrightarrow} 0. \]
	
	Lemma \ref{lemma:schemeprops}.\ref{lemma:schemeprops:D} guarantees that $\bar{\mu}^\hl$-a.s., and for any $1 \le \alpha \le k$,
	\[ \lim_{l \to \infty} \sum_{n=0}^{N(h)-1} \int_{t^n}^{t^{n+1}}\sumbi \hl^d \bD^\hl(\bhlbu)_\bi  \cdot (\phihl_\alpha)_\bi(t) = 0, \]
	and this convergence is uniform. A direct application of Cauchy-Schwarz's inequality, together with the $\bar{\mu}$-a.s. $L^2$-boundedness of $\bu$ and $L^\infty$ boundedness of $\phi_i$ for all $i$ (cf. Lemma \ref{lemma:Phdivprops}.\ref{lemma:Phdivprops:Linfbd}), show that $\bar{\mu}$-a.s.,
	\[ \lim_{l \to \infty} \sum_{n=0}^{N(h)-1} \int_{t^n}^{t^{n+1}} \sum_{\alpha=1}^k  \left[ \sumbi h_l^d  \bD^\hl(\bhlbu)_\bi \cdot (\phihl_\alpha)_\bi(t)\right]\prod_{\substack{\beta=1 \\ \beta\neq \alpha}}^k \left[ \sumbi \hl^d \bu^\hl_\bi(t) \cdot (\phihl_\beta)_\bi(t) \right] ~dt \, d\mu^\hl(\bu) = 0.\]
	
	Furthermore, as $\phihl_i$ is $\bar{\mu}^\hl$-a.s. discretely divergence-free, $\bar{\mu}$-a.s., cf. Lemma \ref{lemma:adj}, clearly
	\[ \sumbi \gradhl \psi^{\hl;n}_\bi \cdot (\phihl_\alpha)_\bi(t) = 0.\]

	Recall that $\bC$ is defined as a difference of fluxes $\bFm$, which can be easily seen to be consistent, in the sense that $\bFm(\bu,\bu,\bu,\bu) = u_m \bu$, and Lipschitz-continuous (under an assumption of $L^2$-boundedness of $\bu$, cf. Lemma \ref{lemma:schemeprops}.\ref{lemma:schemeprops:L2}). A cumbersome but straightforward argument, cf. \cite{CPPThesis}, pp. 74-75, leveraging the properties listed in Lemmas \ref{lemma:schemeprops} and \ref{lemma:Phdivprops}, shows the following: for $t \in [t^n, t^{n+1})$,
	\begin{align*}&\lim_{l\to\infty} h_l^d \sumbi -\bm{C}^\hl(\bu^{\hl;n}, \bhlbu)_\bi  \cdot (\phihl_\alpha)_\bi(t) \\
	& \qquad = \lim_{l\to\infty}\int_D \sum_{m=1}^d (u_m)^{\hl;n}(x) \bu^{\hl;n}(x) \cdot \partial_{x_m} (\phi_\alpha)(t,x) \,dx \\
	& \qquad = \lim_{l\to\infty}\int_D \left[ \bu^{\hl;n} \otimes \bu^{\hl;n} (x) \right] : \nabla (\phi_\alpha)(t,x) \,dx \end{align*}

	Thus, substituting into eq. \eqref{eq:summary1} and taking the limit as $l \to \infty$, we find
	
	\begin{align*}
	0 = 
	\lim_{l \to \infty} \Bigg\{ &\int_{L^2(D; \R^d)}\int_0^T  \int_{D^k} \left[ \bu(t, x_1) \otimes \bu(t, x_2) \otimes \cdots \otimes  \bu(t, x_k) \right] : \partial_t {\Phihl}
	~ d\bx \, dt \, d\mu^\hl(\bu)\\
	&\qquad+ \int_{L^2(D; \R^d)}  \int_{D^k} \left[ \barbu(x_1) \otimes \cdots \otimes \barbu(x_k)\right] : {\Phihl}(0,\bx)
	~ d\bx \, d\bar{\mu}^\hl(\barbu)\\
	&\qquad +  \int_{L^2(D; \R^d)}  \int_0^T \int_{D^k} \sum_{\alpha=1}^{k} \left[ \bu(t, x_1) \otimes \cdots \otimes (\bu(t^n, x_\alpha) \otimes \bu(t^n, x_\alpha)) \otimes \cdots \otimes \bu(t, x_k) \right] \\
	&\hspace{12em}: \nabla_{x_\alpha} \Phi(t,\bx) \,d\bx \,dt \, d\mu^\hl(\bu) \Bigg\}
	\end{align*}
	
	Note that the term $(\bu(t^n, x_\alpha) \otimes \bu(t^n, x_\alpha))$ remains not parameterized with respect to $t$. However, it is not difficult to show, \cite{CPPThesis}, Corollary A.19, that the integral of $(\bu(t^n, x_\alpha) \otimes \bu(t^n, x_\alpha)) - (\bu(t, x_\alpha) \otimes \bu(t, x_\alpha))$ vanishes in the limit. Hence, additionally replacing $\phihl$ and $\Phihl$ by their continuous counterparts, again through the $L^2$-boundedness of $\bu$ and the $L^\infty$-convergence of $\phihl$ to $\phi$ (Lemma \ref{lemma:Phdivprops}.\ref{lemma:Phdivprops:Linfbd}) we conclude that
	
	\begin{align*}
	0 
	&= \lim_{l \to \infty} \Bigg\{ \int_{L^2(D; \R^d)}\int_0^T  \int_{D^k} \left[ \bu(t, x_1) \otimes \bu(t, x_2) \otimes \cdots \otimes  \bu(t, x_k) \right] : \partial_t {\Phi}
	~ d\bx \, dt \, d\mu^\hl(\bu)\nonumber\\
	&\qquad+ \int_{L^2(D; \R^d)}  \int_{D^k} \left[ \barbu(x_1) \otimes \cdots \otimes \barbu(x_k)\right] : {\Phi}(0,\bx)
	~ d\bx \, d\bar{\mu}^\hl(\barbu)\nonumber\\
	&\qquad +  \int_{L^2(D; \R^d)}  \int_0^T \int_{D^k} \sum_{\alpha=1}^{k} \left[ \bu(t, x_1) \otimes \cdots \otimes (\bu(t, x_\alpha) \otimes \bu(t, x_\alpha)) \otimes \cdots \otimes \bu(t, x_k) \right] \nonumber\\
	&\hspace{12em}: \nabla_{x_\alpha} \Phi(t,\bx) \,d\bx \,dt \, d\mu^\hl(\bu) \Bigg\} \nonumber\\
	&= \int_{L^2(D; \R^d)}\int_0^T  \int_{D^k} \left[ \bu(t, x_1) \otimes \bu(t, x_2) \otimes \cdots \otimes  \bu(t, x_k) \right] : \partial_t {\Phi}
	~ d\bx \, dt \, d\mu(\bu)\nonumber\\
	&\qquad+ \int_{L^2(D; \R^d)}  \int_{D^k} \left[ \barbu(x_1) \otimes \cdots \otimes \barbu(x_k)\right] : {\Phi}(0,\bx)
	~ d\bx \, d\bar{\mu}(\barbu)\nonumber\\
	&\qquad +  \int_{L^2(D; \R^d)}  \int_0^T \int_{D^k} \sum_{\alpha=1}^{k} \left[ \bu(t, x_1) \otimes \cdots \otimes (\bu(t, x_\alpha) \otimes \bu(t, x_\alpha)) \otimes \cdots \otimes \bu(t, x_k) \right]\nonumber \\
	&\hspace{14em}: \nabla_{x_\alpha} \Phi(t,\bx) \,d\bx \,dt \, d\mu(\bu). \nonumber
	\end{align*}
	
	In the last step, we have used the assumption of convergence in the sense of Theorem \ref{thm:compactness}.\ref{prop:strongconv_time}. That is: the limit measure verifies the momentum equation \eqref{eq:statsol1}, and is indeed, a statistical solution of the incompressible Euler equations.
	
\end{proof}

\begin{remark}
	In fact, under the conditions of Theorem \ref{thm:lxw}, the limit $\mu_t$ can be seen to be a \emph{dissipative statistical solution} in the sense of Def. \ref{def:dissipativestatsol}. The fundamental observation is that energy is an admissible observable in the sense of Theorem \ref{thm:compactness}.\ref{prop:strongconv_time}; the proof for the convex splitting is tedious but straightforward. Cf. Remark 4.6 in \cite{LMP1}. In particular, then Theorems \ref{thm:localeu} and \ref{thm:wsu} for well-posedness hold.
\end{remark}

\begin{remark}
	The results proved here extend in a sense those of \cite{Chen2012}. Under a similar assumption to \eqref{hyp:scaling} they prove strong convergence of the numerical approximations, understood as a vanishing (numerical) viscosity sequence, to a weak solution of the incompressible Euler equations. If one assumes that this solution is unique, then combining the results in this section with those of Chen and Glimm, one could conclude that Algorithm \ref{algo:fkmt} approximates a random family of weak solutions to the incompressible Euler equations. However, crucially, the approach described here does not assume a unique weak solution exists, and produces meaningful results regardless. In particular in section \ref{ss:sls} we will display an example that suggests convergence of the algorithm to a statistical solution for the discontinuous shear layer, where uniqueness of weak solutions, at least in an unconstrained setting, is known to fail.
\end{remark}

\section{Numerical examples}
\label{sec:examples}
The Monte Carlo algorithm \ref{algo:fkmt} is eminently practical; we present here some novel numerical examples, for well-known test cases, computed with the implementation of the scheme in \cite{luqness}.

\subsection{Setting and notation}
Regarding the numerical parameters for scheme \eqref{eq:scheme1}-\eqref{eq:scheme2}, we consider $\theta = 1$, a coefficient for numerical viscosity $\epsilon= 0.1$, and a time-step that satisfies a CFL-type condition for the underlying advection problem with CFL number $\lambda = 0.5$.


Let $N, M \in \mathbb{N}$, $T \in \mathbb{R}$. We will denote an \textbf{ensemble} of $M$ simulations at resolution $N \times N$ up to time $T$ by
\[U^{N,M,T} := \{U^{N,m,T}\}_{m=1}^M, \]
where each $U^{N,m,T}$
, dubbed \textbf{sample}, is a separate realization of a Cauchy problem on an $N \times N$ grid, up to time $T$, as described in Algorithm \ref{algo:fkmt}. $U^{N,m,T}$ can be considered to be an element $\bu$ in the support of a discrete probability measure $\mu^h$ constructed by Algorithm \ref{algo:fkmt}.

\begin{remark}
	\label{rem:ensembleaspdf}
	Fixing $t$, an ensemble $U^{N,M,t}$ can be trivially identified with a probability distribution $\mu_t \in \Prob(L^p(D; \R^d))$, which we term its \emph{empirical measure}:
	\begin{equation}\label{eq:discrempiricalmeasure} \mu_t^{N,M} = \frac{1}{M}\sum_{m=1}^M \delta_{U^{N,m,t}}. \end{equation}
\end{remark}

A priori, the number $M$ of samples in Algorithm \ref{algo:fkmt} and the spatial resolution $N$ of the finite volume scheme are unrelated. However, to ensure convergence as the mesh is refined, we make the choice of equating $M = N$. Even in two dimensions, the cost of higher-resolution simulations makes it prohibitive to scale $M$ like $N^2$ (as a naive Monte Carlo approximation would have); we remark however that the convergence proven in Theorems \ref{thm:relativeconv} and \ref{thm:lxw} is without a rate. This choice is frequent in the literature, see e.g. \cite{FLM18}, \cite{FLMWSystems}, \cite{LMP1}... We provide some numerical evidence for this choice in Section \ref{sss:MCerror}.

In this section we will often refer, for a time $t$, to the \emph{sample mean} and \emph{variance operators}, defined as those of the empirical measure.

For the discretization of the structure function,
we will follow the choices of \cite{LyePhd} and take 
\begin{align*}
\int_\domain\fint_{B_r(x)} \|\bu(y) - &\bu(x)\|^p \,dy\, dx \\
\approx \frac{h^2}{l^2} \sum_{i=1}^N \sum_{j=1}^N \bigg[ &\sum_{k=-l+1}^l \sum_{n=-l+1}^l \left( \|U_{i+k,j+n} - U_{i,j}\|^p \right) \\
&+ \frac{1}{2} \sum_{k=-l+1}^l \left(\| U_{i+k,j-l} - U_{i,j}\|^p + \| U_{i+k,j+l} - U_{i,j}\|^p \right) \\
&+ \frac{1}{2} \sum_{n=-l+1}^l \left(\| U_{i-l,j+n} - U_{i,j}\|^p + \| U_{i+l,j+n} - U_{i,j}\|^p \right) \\
&+ \frac{1}{4} \sum_{\alpha, \beta= 0}^1 \|U_{i + (-1)^\alpha l, j + (-1)^\beta l} - U_{i,j}\|^p \bigg].
\end{align*}

For reasons of computational efficiency, we will present structure functions at fixed times, i.e. for a given $t \in [0, T)$ we compute
\begin{equation*}
S^p_{r,t}(\bnu) := \left(\int_{L^p_x} \int_\domain\fint_{B_r(x)} \|\bu(y) - \bu(x)\|^p \,dy\, dx\, d\mu_t(u)\right)^\frac{1}{p}.
\end{equation*}

As will be shown in the sequel, we observe uniform bounds for all time points considered for all numerical cases. This is strong evidence for uniform boundedness for the time-integrated version, and thus for convergence (up to a subsequence) of the scheme; compare to Remark \ref{rem:structsuffcond}.

\subsection{Double shear layers}
\label{ss:sls}
Here we present two tests in a well-known family, the two-dimensional double \emph{shear layer}.

We dub \emph{smooth shear layer} the following initial datum: fix $\rho > 0$, and consider the initial condition for $(x,y) \in \T^2$:
\[ \tilde{u}_0(x,y) = \begin{cases}
\tanh\left( (y - 0.25)/\rho \right) &\mbox{ if } y \le 0.5, \\
\tanh\left( (0.75 - y)/\rho \right) &\mbox{ otherwise; }
\end{cases}\qquad \qquad \tilde{v}_0(x,y) = 0. \]

The \emph{discontinuous shear layer} is the pointwise a.e. limit of the above when $\rho \to 0$; that is, 
\[ \tilde{u}_0(x,y) = \begin{cases}
1 &\mbox{ if } y \in (0.25, 0.75), \\
-1 &\mbox{ otherwise};
\end{cases}\qquad  \qquad \tilde{v}_0(x,y) = 0. \]

The \emph{smooth} initial datum satisfies the conditions for local existence of classical solutions of the incompressible Euler equations. Therefore, at least for a short time horizon, one can expect the existence of a unique strong solution; and thus, due to weak-strong uniqueness, see \cite{Brenier11}; also cf. Theorem \ref{thm:wsu}, as long as the classical solution is defined, there exists a unique measure-valued solution, and that is the atomic measure corresponding to the classical solution.

Conversely, the \emph{discontinuous} initial datum has vorticity which is a measure without distinguished sign, so it falls outside of Delort's class; in fact it is explicitly known, \cite{Szekelyhidi11}, to belong to the class of ``wild initial data'', i.e. infinitely many (admissible) weak solutions to this problem exist.

Fix now $0 < \gamma \in \R$, $K \in \N$ even, and consider a random perturbation function 
\begin{align}
\begin{split}
f_\gamma &: \Omega \times \T^2 \to \T^2\\
f_\gamma(\omega; x,y) &= \left(x, y + \gamma \sum_{k=0}^{K/2} Y_{2k}(\omega) \sin(2 \pi (k+1) (x + Y_{2k+1}(\omega))) \right), \label{eq:pertfct}
\end{split}
\end{align}
with $Y_j \sim \mathcal{U} [-1,1]$ i.i.d. $\forall j$. Finally, set initial velocities
\[ (u_0, v_0)(\omega, \cdot) = (\tilde{u}_0, \tilde{v}_0) \circ f_\gamma(\omega; \cdot);\]
we recall that discrete divergence is projected out (with operator $\Phdiv$) before simulation starts. See Fig. \ref{fig:slpert} for an example of the effect of the perturbation on the initial condition.

Let us first remark that, for individual realizations of the Cauchy problem, the two variants of the initial datum exhibit drastically different evolutions. In Fig. \ref{fig:slCauchy} we present, at different points in time $t$, the Cauchy rates $\|\bu^{h}(t) - \bu^{\frac{h}{2}}(t) \|_{L^2}$ for discretizations of the same realization of the initial datum. Clearly as the mesh is refined, the smooth shear layer, at all times, appears to form a Cauchy sequence in $L^2$; the same is only true for the discontinuous version at $t=0$; for all later times, the Cauchy rate appears to flatten rapidly.

\begin{figure}
	\centering
	\begin{subfigure}{.3\textwidth}
		\centering
		\includegraphics[width=\linewidth]{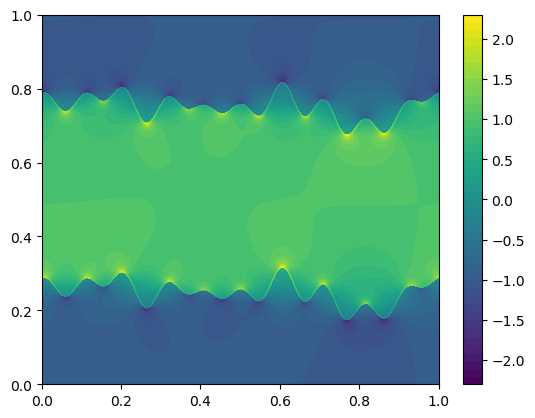}
		\subcaption{$\tilde{u}_0$}
	\end{subfigure}%
	\begin{subfigure}{.3\textwidth}
		\centering
		\includegraphics[width=\linewidth]{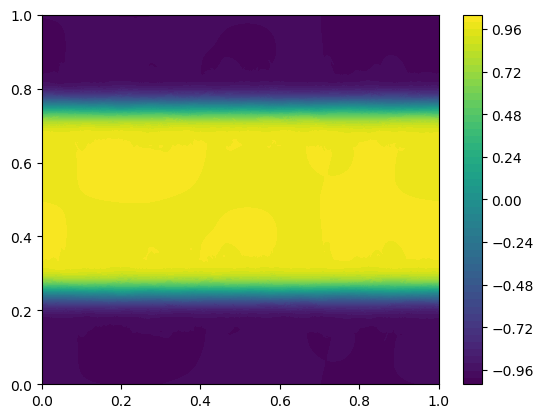}
		\subcaption{Mean (discont.)}
	\end{subfigure}%
	\begin{subfigure}{.3\textwidth}
		\centering
		\includegraphics[width=\linewidth]{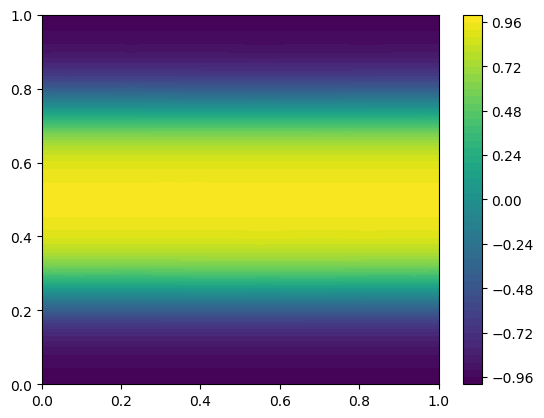}
		\subcaption{Mean (smooth)}
	\end{subfigure}

	\begin{subfigure}{.3\textwidth}
		\centering
		\includegraphics[width=\linewidth]{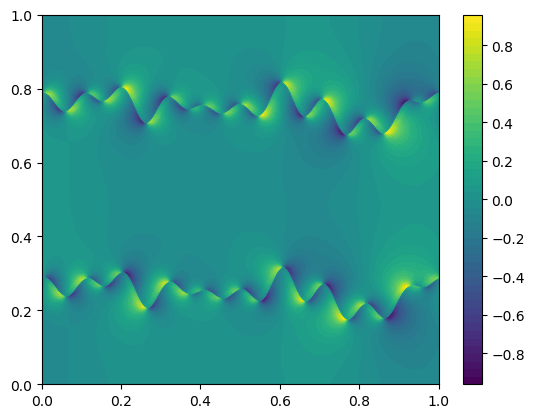}
		\subcaption{$\tilde{v}_0$}
	\end{subfigure}%
	\begin{subfigure}{.3\textwidth}
		\centering
		\includegraphics[width=\linewidth]{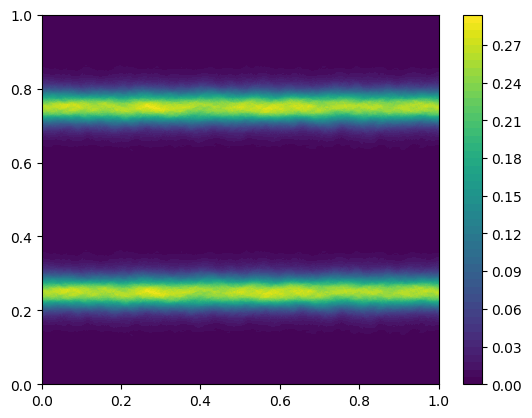}
		\subcaption{Variance (discont.)}
	\end{subfigure}
	\begin{subfigure}{.3\textwidth}
		\centering
		\includegraphics[width=\linewidth]{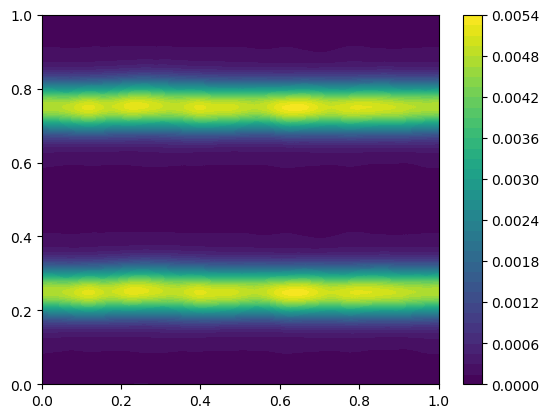}
		\subcaption{Variance (smooth)}
	\end{subfigure}
	
	\caption{Left: one realization of the random initial condition for discontinuous shear layer. Center: mean (top) and variance (bottom) for $\tilde{u}_0$, discontinuous shear layer. Right: mean and variance for $\tilde{u}_0$, smooth shear layer.}
	\label{fig:slpert}
\end{figure}

\begin{figure}
	\centering
	\begin{subfigure}{.35\textwidth}
		\centering
		\includegraphics[width=\linewidth]{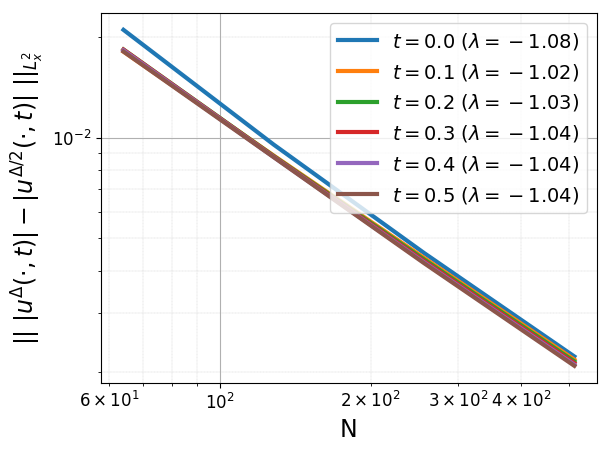}
		\subcaption{Smooth shear layer}
	\end{subfigure}%
	\begin{subfigure}{.35\textwidth}
		\centering
		\includegraphics[width=\linewidth]{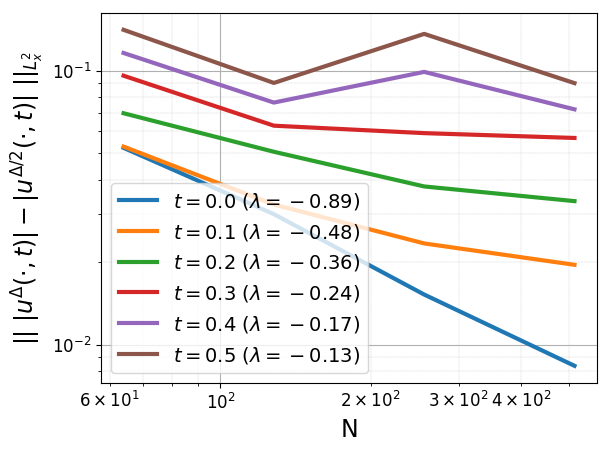}
		\subcaption{Discontinuous shear layer}
	\end{subfigure}	
	\caption{Cauchy rates for individual samples as time evolves, for smooth (left) and discontinuous (right) shear layer. $\lambda$ is the least square fit for the slope.}
	\label{fig:slCauchy}
\end{figure}

In Fig. \ref{fig:slstructnorm} we display the structure functions obtained for both cases at times $t \in \{0, 0.4\}$. The uniform boundedness appears obvious; in fact the initial data for the discontinuous shear layer gains regularity as the simulation evolves, due to the action of the numerical diffusion operator $\bD$ -- which, recall Lemma \ref{lemma:schemeprops}.\ref{lemma:schemeprops:D}, vanishes in the limit. Hence, through Theorems \ref{thm:relativeconv}-\ref{thm:lxw}, one can expect convergence (up to a subsequence) of Algorithm \ref{algo:fkmt}.

In fact, we find direct numerical evidence of this convergence for the sequence of resolutions considered. Let $\bnu_t^{N,N}$ the correlation measure associated, in the sense of Theorem \ref{thm:duality}, with the empirical measure $\mu_t^{N,N}$ in eq. \eqref{eq:discrempiricalmeasure}. In Fig. \ref{fig:slwass} we display the Cauchy rates for an approximation to the Wasserstein distances for the $k$-point marginals $\bnu_t^{k; N,N}$, i.e., we plot
\[ W_1\left(\bnu^{k; N,N}, \bnu^{k; 2N,2N}\right),\]
as a function of $N$, for $k \in \{1,2,3\}$. The results strongly suggest convergence (albeit noisy) of the marginal probability measures. In fact, as Theorem \ref{thm:compactness} indicates, observables such as mean and variance of the empirical measure, considered as functions in $L^2$, converge strongly; cf. Fig. \ref{fig:slmeanvar}.

\begin{figure}	
	\centering
	\begin{subfigure}{0.3\textwidth}
		\centering
		\includegraphics[width=\linewidth]{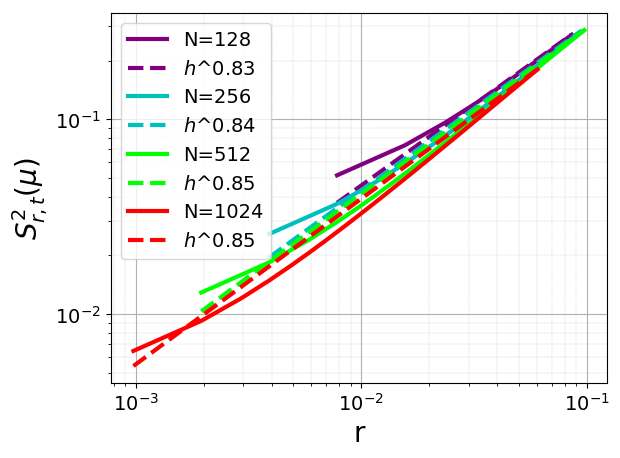}
	\end{subfigure} %
	\begin{subfigure}{0.3\textwidth}
		\centering
		\includegraphics[width=\linewidth]{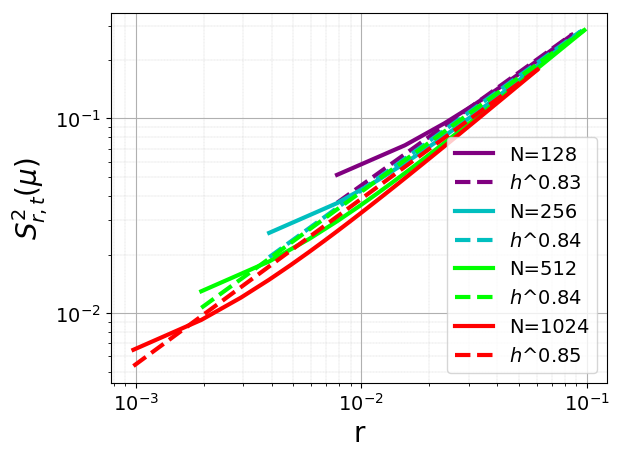}
	\end{subfigure}

	\begin{subfigure}{0.3\textwidth}
		\centering
		\includegraphics[width=\linewidth]{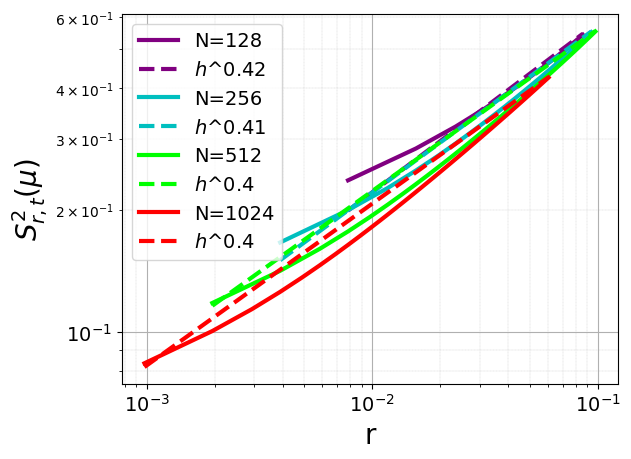}
	\end{subfigure} %
	\begin{subfigure}{0.3\textwidth}
		\centering
		\includegraphics[width=\linewidth]{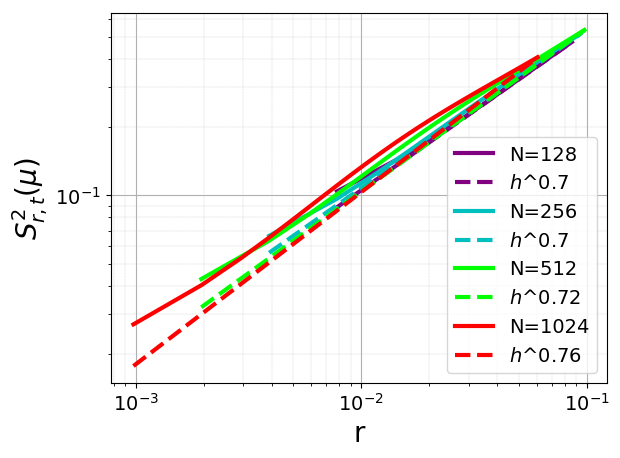}
	\end{subfigure}
	
	\caption{Structure functions for velocity vector, $p=2$, for smooth (top) and discontinuous shear layer (bottom). Left, $t=0$; right, $t=0.4$.}
	\label{fig:slstructnorm}
\end{figure}

\begin{figure}
	\centering
	\begin{subfigure}{.3\textwidth}
		\centering
		\includegraphics[width=\linewidth]{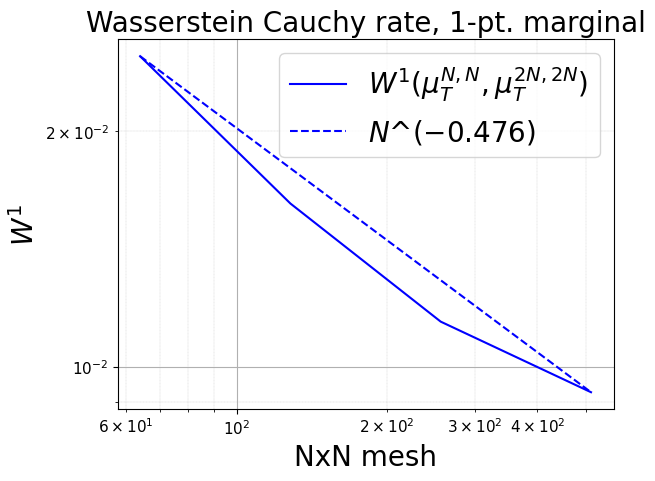}
		\subcaption{$k=1$}
	\end{subfigure}%
	\begin{subfigure}{.3\textwidth}
		\centering
		\includegraphics[width=\linewidth]{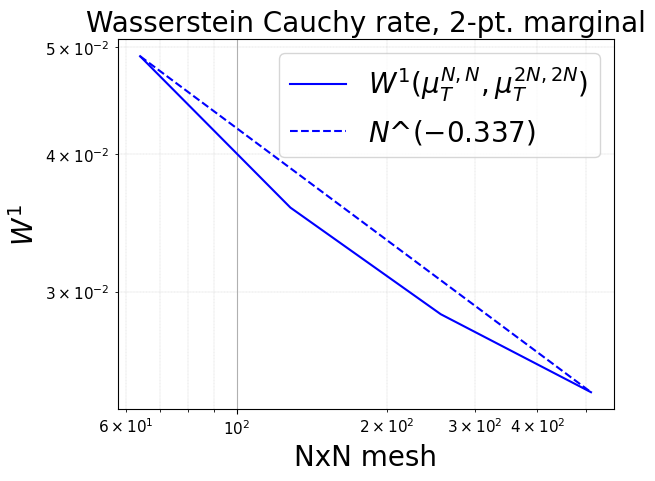}
		\subcaption{$k=2$}
	\end{subfigure}%
	\begin{subfigure}{.3\textwidth}
		\centering
		\includegraphics[width=\linewidth]{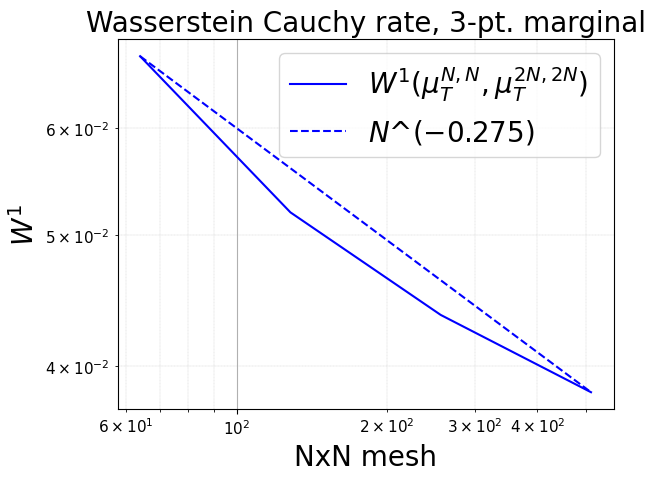}
		\subcaption{$k=3$}
	\end{subfigure}
	
	\begin{subfigure}{.3\textwidth}
		\centering
		\includegraphics[width=\linewidth]{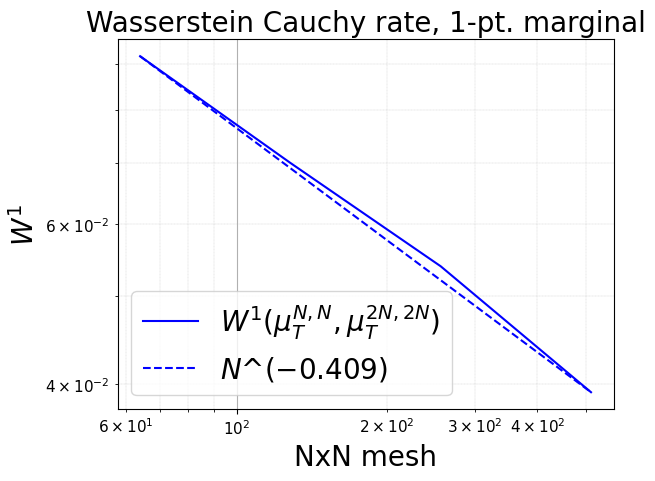}
		\subcaption{$k=1$}
	\end{subfigure}%
	\begin{subfigure}{.3\textwidth}
		\centering
		\includegraphics[width=\linewidth]{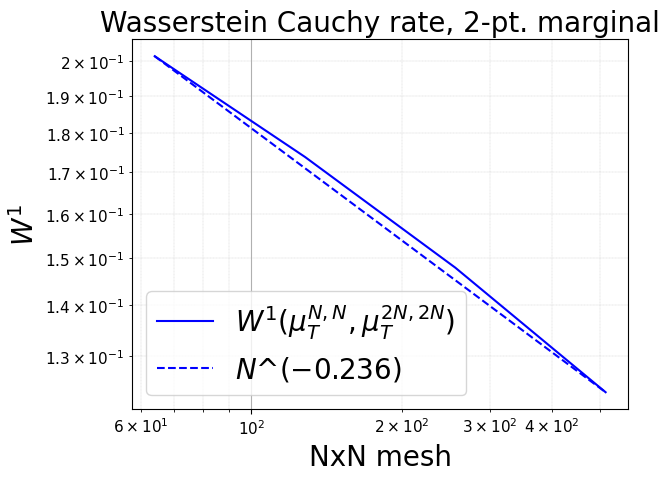}
		\subcaption{$k=2$}
	\end{subfigure}
	\begin{subfigure}{.3\textwidth}
		\centering
		\includegraphics[width=\linewidth]{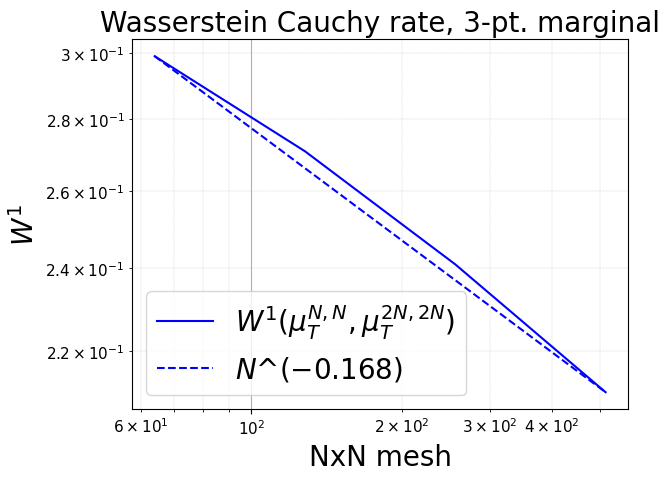}
		\subcaption{$k=3$}
	\end{subfigure}
	
	\caption{Cauchy rates for Wasserstein distances for the velocity vector field, (left to right) 1-, 2- and 3-point marginals. Smooth (top) and discontinuous (bottom) shear layer; $t=0.4$.}
	\label{fig:slwass}
\end{figure}

\begin{figure}
	\centering
	\begin{subfigure}{.35\textwidth}
		\centering
		\includegraphics[width=\linewidth]{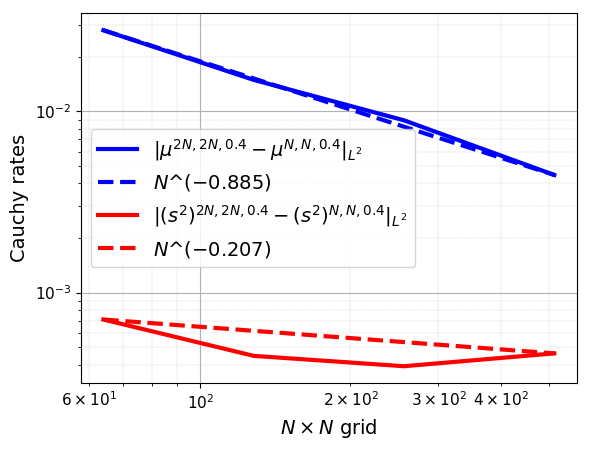}
		\subcaption{Smooth shear layer}
	\end{subfigure}%
	\begin{subfigure}{.35\textwidth}
		\centering
		\includegraphics[width=\linewidth]{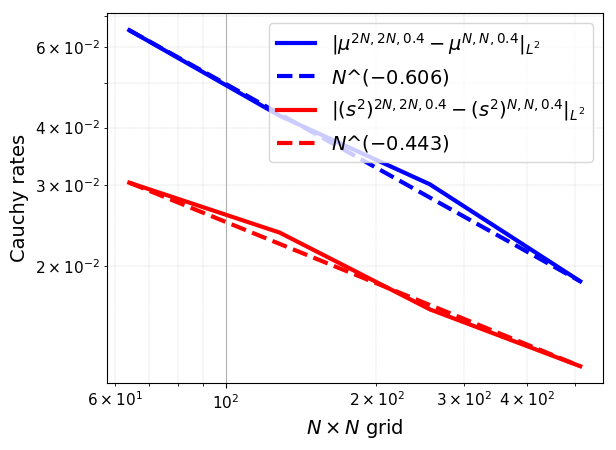}
		\subcaption{Discontinuous shear layer}
	\end{subfigure}	
	\caption{Cauchy rates for mean and variance for the smooth (left) and discontinuous (right) shear layer.}
	\label{fig:slmeanvar}
\end{figure}

We believe this test case paints a convincing picture of the advantage of the framework of statistical solutions: weak, deterministic solutions appear an unsuitable framework for numerics (Fig. \ref{fig:slCauchy}), at least for cases where no unique solution exists. Conversely, we obtain solid experimental evidence that assumption \eqref{hyp:scaling}, necessary for convergence of statistical solutions, holds for both cases (Fig. \ref{fig:slstructnorm}). Indeed, as predicted by Theorem \ref{thm:compactness} we observe strong convergence for observables such as mean and variance. Furthermore, we find evidence that the marginals converge, even for $k>1$; i.e. we do not only obtain convergence in the sense of measure-valued solutions, but properly for statistical solutions.

\subsubsection{Convergence to a non-trivial statistical solution}
In this section, we briefly present a result we find enlightening about the shear layer numerical experiment. As mentioned before, there is a fundamental difference in well-posedness for the smooth and discontinuous shear layer. However, in the numerical experiments above, we have found fairly analogous results for both, up to slightly faster rates of convergence for the smooth version. With the following example, we aim to show a key difference between the empirical measure obtained for both cases.

For this, recall \eqref{eq:pertfct}: we introduced a perturbation $f_\gamma$, with a ``small'' parameter $\gamma$. An interesting question is: what changes if one lets $\gamma \to 0$? Does one recover the unperturbed initial datum $(\tilde{u}_0, \tilde{v}_0)$, itself a steady state, as a solution to the problem? In other words: fix a high enough resolution $N$ (we take $N=1024$ here), and for $\gamma > 0$, denote $\mu_\gamma$ the empirical measure $\mu^{N,N}_t$ obtained with initial data distributed as $(\tilde{u}_0, \tilde{v}_0) \circ f_\gamma$. Consider the stationary solution $\bar{\bu}(t) \coloneqq (\tilde{u}_0, \tilde{v}_0)$ for all $t \ge 0$. Is it true that
\[ \lim_{\gamma \to 0} W_1(\mu_{\gamma,t}, \delta_{\bar{\bu}}) = 0 ? \]

The answer can be found in Fig. \ref{fig:slwassconv}. If one takes a very small value of $\gamma$ as a ``reference solution'', one can see that both examples appear to converge to the reference solution. However, the distance to $\delta_{\bar{\bu}}$ only appears to tend to zero for the smooth shear layer. This corresponds to the theoretical intuition: in that case, there exists a unique classical solution, and the only statistical solution is its atomic measure, due to weak-strong uniqueness. Conversely, infinitely many weak solutions exist for the discontinuous shear layer, and thus even as $\gamma \to 0$, we do not recover an atomic measure, but a non-trivial statistical solution.

\begin{figure}
	\centering
	
	\begin{subfigure}{.38\textwidth}
		\centering
		\includegraphics[width=\linewidth]{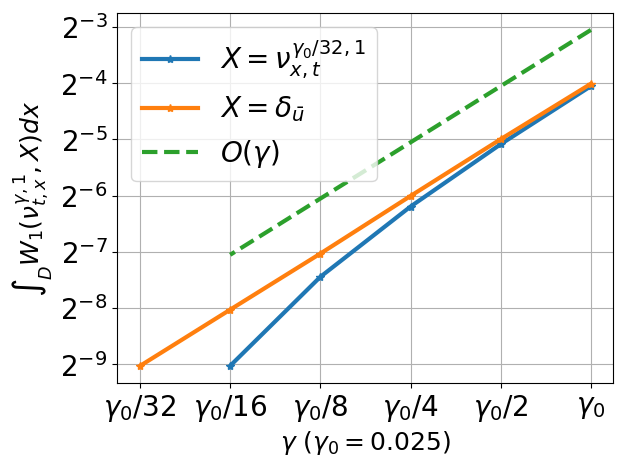}
	\end{subfigure}%
	\begin{subfigure}{.38\textwidth}
		\centering
		\includegraphics[width=\linewidth]{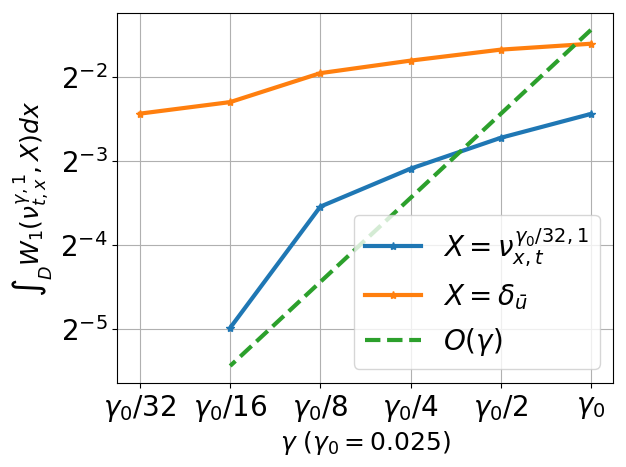}
	\end{subfigure}
	
	\caption{Wasserstein distances with respect to a very small perturbation (blue) and the unperturbed initial datum (orange) as the perturbation magnitude reduces. Smooth (left) and discontinuous (right) shear layer; 1-point marginals at time $t = 0.4$.}
	\label{fig:slwassconv}
\end{figure}

\subsubsection{Standard Monte-Carlo convergence}
\label{sss:MCerror}
A natural question is: how does Algorithm \ref{algo:fkmt}, and in particular the choice of increasing number of samples and spatial resolution in parallel, compare to ``standard'' Monte-Carlo, with a fixed spatial resolution and increasing the number of samples?

A priori, as the spatial error scales with $O(N^{-1})$ and the Monte-Carlo error wiuth $O(N^{-1/2})$ (as we choose $N$ samples), one can expect that the error will be dominated by the stochastic term, and the increase in resolution is moot.

In fact, this behavior is problem- and even observable-dependent. In Fig. \ref{fig:sl3d}, we can see that for the smooth shear layer at $T=0.4$, the mean is nearly homogeneous in the $x$ axis, as the limit must be. This suggests that Monte Carlo error is small in this scenario, and accuracy may only be improved by increasing the spatial resolution. Conversely, the variance for the same example is spatially noisy -- i.e. stochastic error still dominates, and the convergence can be expected to accelerate only with an increased number of samples, and spatial resolution will not have much of an effect.

\begin{figure}
	\centering
	\begin{subfigure}{.3\textwidth}
		\centering
		\includegraphics[width=\linewidth]{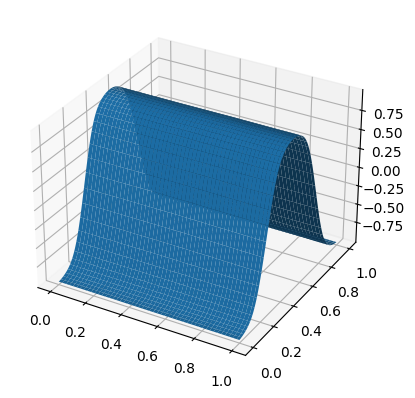}
		\subcaption{Mean}
	\end{subfigure}%
	\begin{subfigure}{.3\textwidth}
		\centering
		\includegraphics[width=\linewidth]{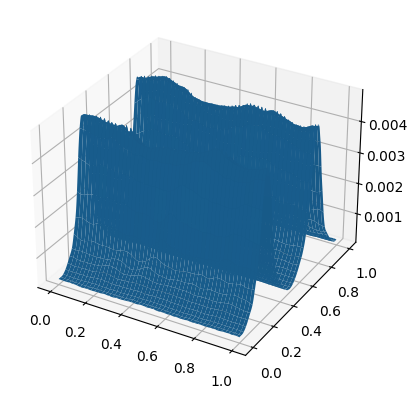}
		\subcaption{Variance}
	\end{subfigure}
	\caption{Observables for horizontal velocity, smooth shear layer, $T=0.4$, for $N=M=1024$.}
	\label{fig:sl3d}
\end{figure}

To experimentally verify this, we consider the following test: for the smooth and discontinuous vortex sheet, we fix a spatial resolution of $N=256$, and vary the number of samples $M=\{64, 128, \dots, 1024\}$. We analyze the Cauchy rates for mean and variance, keeping in mind that where Monte Carlo error dominates, one expects a convergence rate of $1/2$; and slower (to non-existent) where spatial error dominates, such as average of the smooth shear layer. This is precisely what we observe in Fig. \ref{fig:mcerror}.

This paints a clear picture that the heuristics of when refining spatially is optimal, and when to only increase the number of samples, are not trivial. Comparing to the Cauchy rates for the mean of the smooth shear layer in Fig. \ref{fig:slCauchy}, a convincing case appears for Algorithm \ref{algo:fkmt} outperforming fixed-resolution Monte Carlo.

A similar situation appears when studying the Cauchy rates for Wasserstein distances, see Fig. \ref{fig:mcwass}. Comparing to Fig. \ref{fig:slwass}, we observe a slightly decreased Cauchy rate for the smooth shear layer when maintaining spatial resolution fixed. In this case spatial error appeared to play a comparatively larger role than stochastic error, so fixing resolution is expected to hinder convergence. Conversely, for the discontinuous shear layer, little difference appears, as stochastic error appeared to dominate. We present only results for 1-point marginals for brevity, but the behavior is analogous for 2- and 3-point correlations as well.

\begin{figure}
	\centering
	\begin{subfigure}{.4\textwidth}
		\centering
		\includegraphics[width=\linewidth]{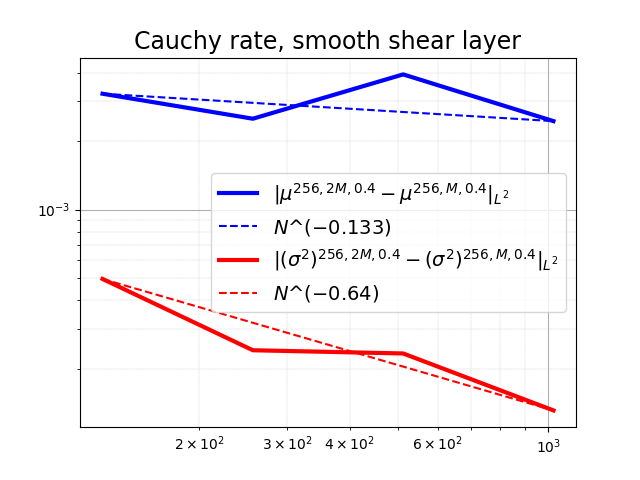}
		\subcaption{Smooth shear layer}
	\end{subfigure}%
	\begin{subfigure}{.4\textwidth}
		\centering
		\includegraphics[width=\linewidth]{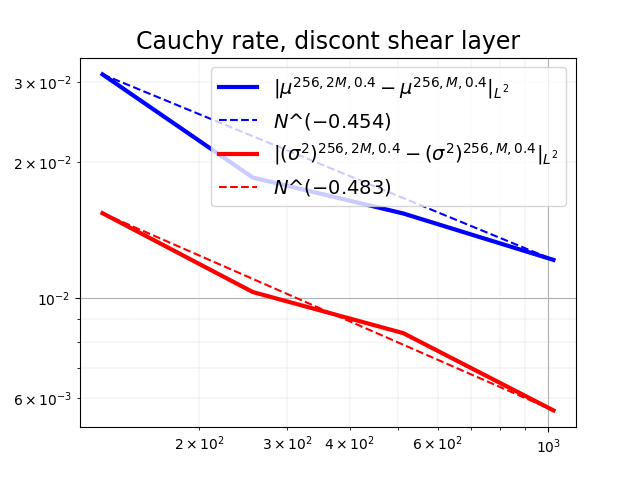}
		\subcaption{Discontinuous shear layer}
	\end{subfigure}
	\caption{Cauchy rates for mean and variance, for smooth and discontinuous shear layer, for $N=256$ and varying $M$, at $T=0.4$.}
	\label{fig:mcerror}
\end{figure}

\begin{figure}
	\centering
	\begin{subfigure}{.4\textwidth}
		\centering
		\includegraphics[width=\linewidth]{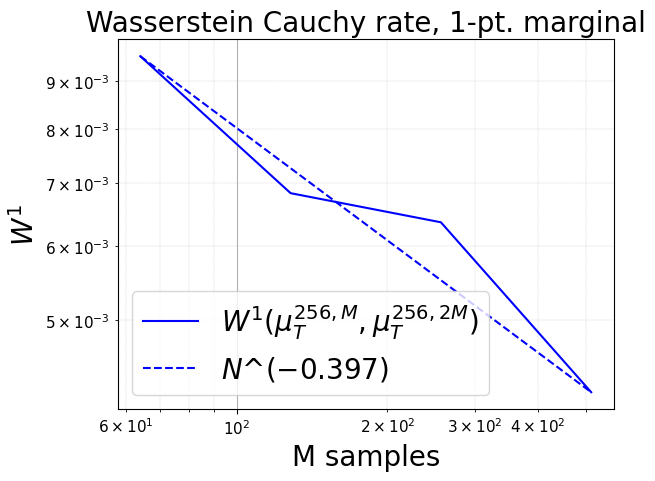}
		\subcaption{Smooth shear layer}
	\end{subfigure}%
	\begin{subfigure}{.4\textwidth}
		\centering
		\includegraphics[width=\linewidth]{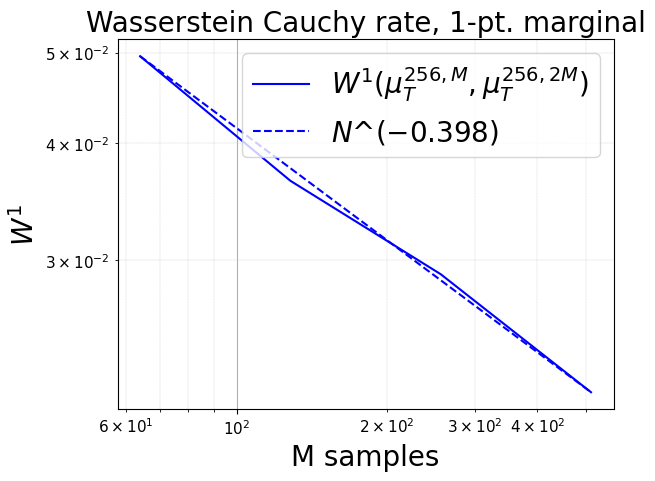}
		\subcaption{Discontinuous shear layer}
	\end{subfigure}

	\caption{Wasserstein distances for 1-point marginals for velocity, for smooth and discontinuous shear layer, for $N=256$ and varying $M$, at $T=0.4$.}
	\label{fig:mcwass}
\end{figure}

\subsection{Fractional Brownian motions}

A \emph{fractional Brownian motion}, \cite{Kolmogorov40}, \cite{Mandelbrot68}, with \emph{Hurst index} $H \in [0, 1)$ is a random noise field $X_H: \Omega \times D \to \R$, that takes normally-distributed values with mean zero at each point, and has covariance
\begin{equation*} \text{Cov}[X_H(x), X_H(y)] = \mathbb{E}\left[ X_H(x)X_H(y)\right] = \frac{1}{2}(\|x-y\|^{2H} - \|x\|^{2H} - \|y\|^{2H}). \end{equation*}

The case $H=0.5$ corresponds to a Brownian motion, often referred to as having ``independent increments''; this is not the case for $H \neq 0.5$. A realization of a Brownian motion with Hurst index $H$ is a.e. in $C^{0, \alpha}(D)$, with $ \alpha \le \max\{0, H - \epsilon \}$ for all $\epsilon > 0$, Prop. 6.4.2 in \cite{LeonardiPhd}. We display here that, even for low H\"older-regularity settings, assumption \eqref{hyp:scaling} holds. We generate realizations of fractional Brownian motions\footnote{More accurately, Brownian bridges, due to the periodic boundary conditions.} with the midpoint displacement method, \cite{Levy65}, \cite{LambertLacroix07}.

We present here results for $H \in \{0.75, 0.5, 0.15\}$; we include a realization of the initial data for each in Fig. \ref{fig:fbminit}.

\begin{figure}
	\centering
	\begin{subfigure}{.33\textwidth}
		\centering
		\includegraphics[width=\textwidth]{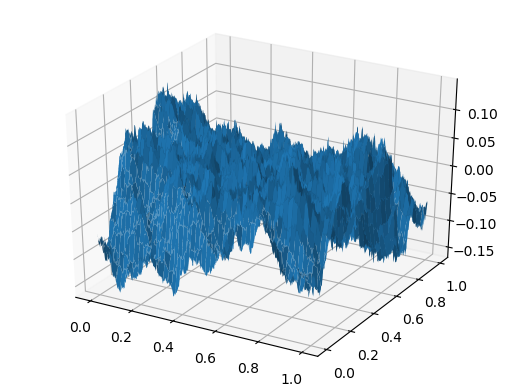}
	\end{subfigure}%
	\begin{subfigure}{.33\textwidth}
		\centering
		\includegraphics[width=\textwidth]{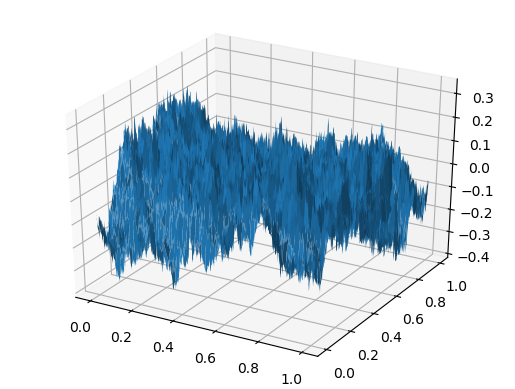}
	\end{subfigure}%
	\begin{subfigure}{.33\textwidth}
		\centering
		\includegraphics[width=\textwidth]{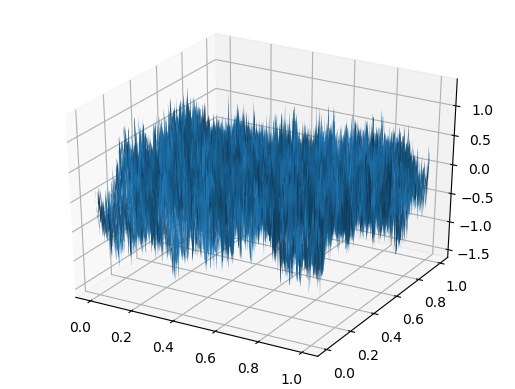}
	\end{subfigure}
	\caption{Three realizations of horizontal velocity for initial data in a fractional Brownian motion (left to right: $H=0.75$, $H=0.5$, $H=0.15$) after discretely divergence-free projection.}
	\label{fig:fbminit}
\end{figure}

We display in Fig. \ref{fig:fbmstruct} the structure function, for $t \in \{0, 1\}$, for all three cases. We observe, again, an initial exponent roughly corresponding to $H$, and a regularization effect as time advances. In Fig. \ref{fig:fbmslopes} we include the evolution of the fitted slope with time, clearly illustrating this.

\begin{figure}	
	\centering
	\begin{subfigure}{0.3\textwidth}
		\centering
		\includegraphics[width=\linewidth]{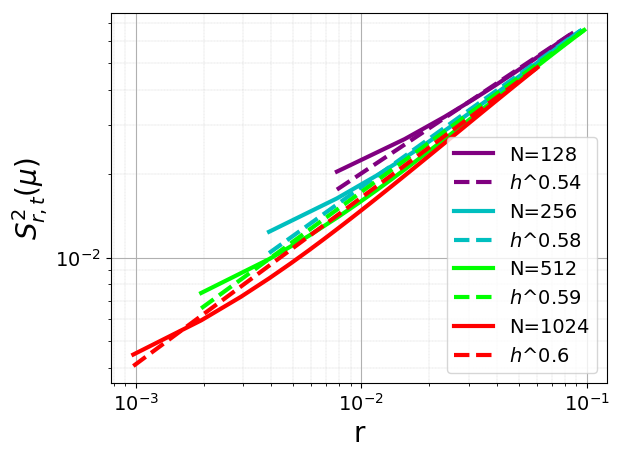}
	\end{subfigure} %
	\begin{subfigure}{0.3\textwidth}
		\centering
		\includegraphics[width=\linewidth]{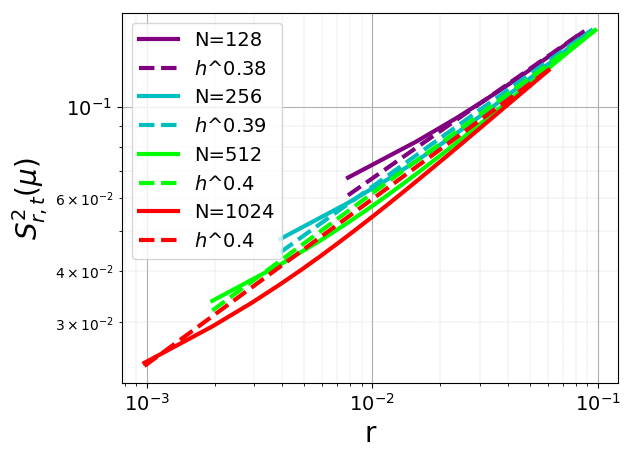}
	\end{subfigure} %
	\begin{subfigure}{0.3\textwidth}
		\centering
		\includegraphics[width=\linewidth]{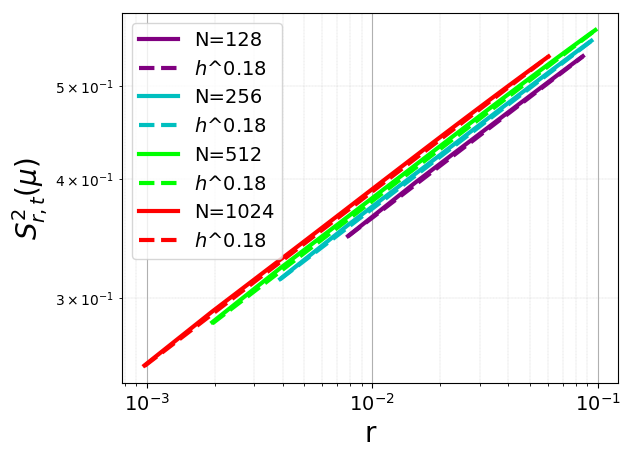}
	\end{subfigure}
	
	\begin{subfigure}{0.3\textwidth}
		\centering
		\includegraphics[width=\linewidth]{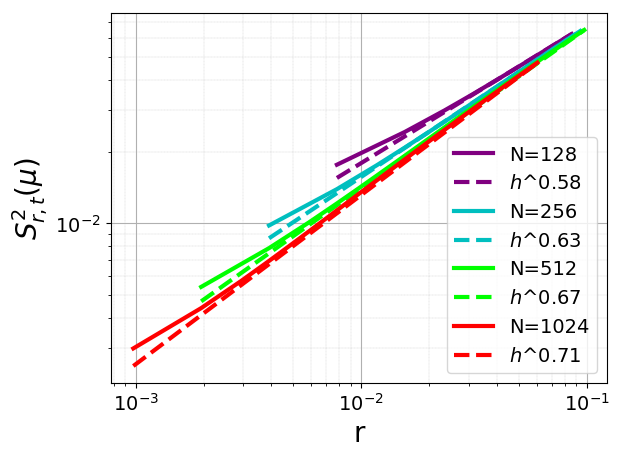}
	\end{subfigure} %
	\begin{subfigure}{0.3\textwidth}
		\centering
		\includegraphics[width=\linewidth]{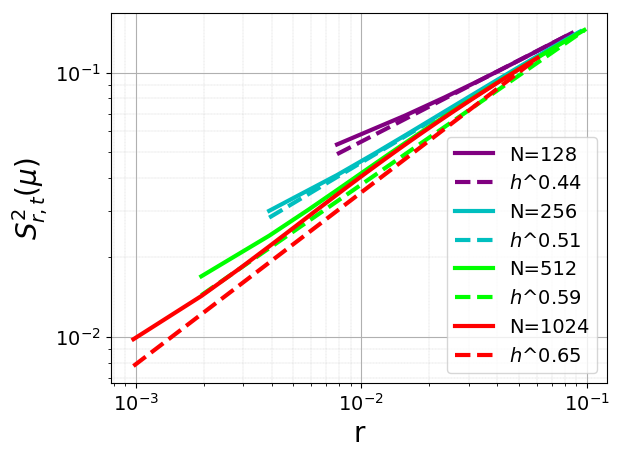}
	\end{subfigure} %
	\begin{subfigure}{0.3\textwidth}
		\centering
		\includegraphics[width=\linewidth]{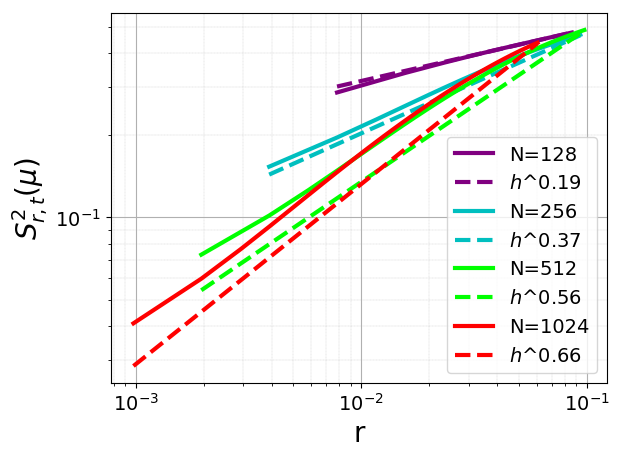}
	\end{subfigure}
	
	\caption{Structure functions for velocity vector, $p=2$, for fractional Brownian motion with (left to right) $H \in \{0.75, 0.5, 0.15\}$, for $t=0$ (top) and $t=1$ (bottom).}
	\label{fig:fbmstruct}
\end{figure}

\begin{figure}
	\centering
	\begin{subfigure}{.3\textwidth}
		\centering
		\includegraphics[width=\textwidth]{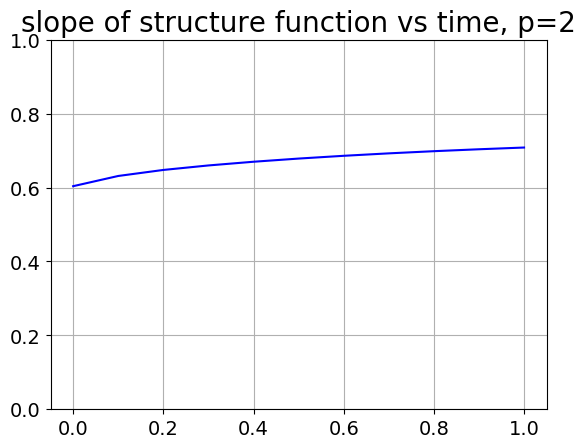}
	\end{subfigure}%
	\begin{subfigure}{.3\textwidth}
		\centering
		\includegraphics[width=\textwidth]{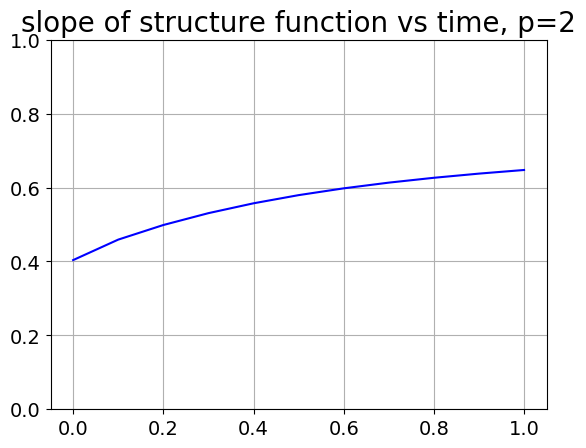}
	\end{subfigure}%
	\begin{subfigure}{.3\textwidth}
		\centering
		\includegraphics[width=\textwidth]{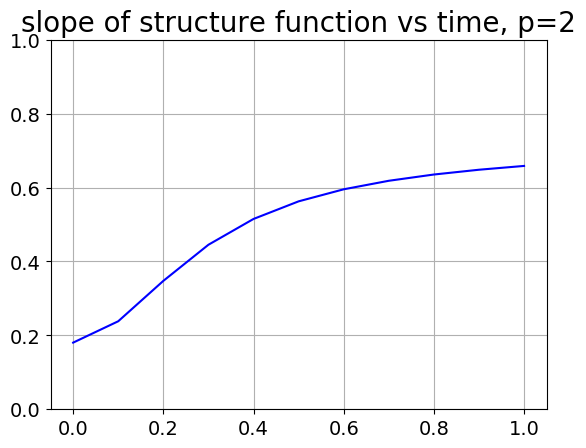}
	\end{subfigure}
	\vspace{0.5em}
	
	\caption{Time evolution of the best fit for the slope, as in Fig. \ref{fig:fbmstruct}, for the structure function with $p=2$. Left to right, $H=0.75$, $H=0.5$, $H=0.15$.}
	\label{fig:fbmslopes}
\end{figure}

Again, a uniform boundedness from above is strongly suggested, even for initial data in very low regularity spaces. Unsurprisingly again, we observe clear strong convergence of observables (Fig. \ref{fig:fbmmeanvar}) and vanishing Cauchy rates in Wasserstein metric, for the first few marginals, as in the previous test (Fig. \ref{fig:fbmwass}).

\begin{figure}
	\begin{subfigure}{0.31\textwidth}
		\centering
		\includegraphics[width=\linewidth]{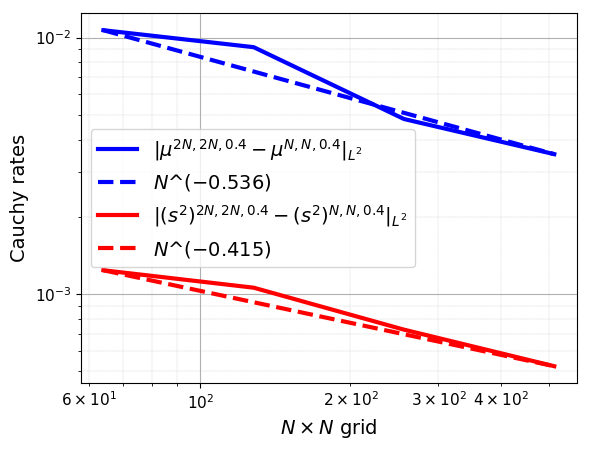}
	\end{subfigure} %
	\begin{subfigure}{0.31\textwidth}
		\centering
		\includegraphics[width=\linewidth]{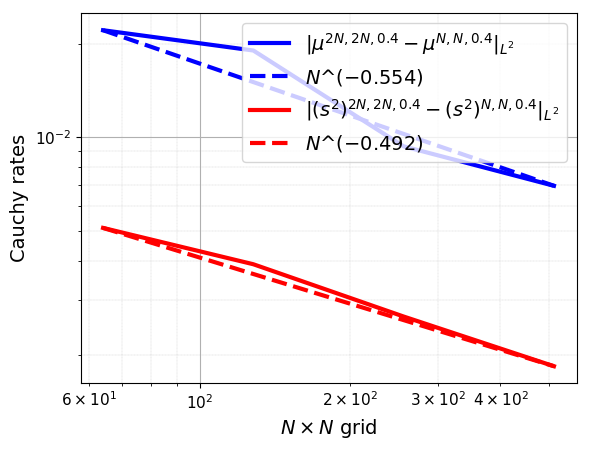}
	\end{subfigure} %
	\begin{subfigure}{0.31\textwidth}
		\centering
		\includegraphics[width=\linewidth]{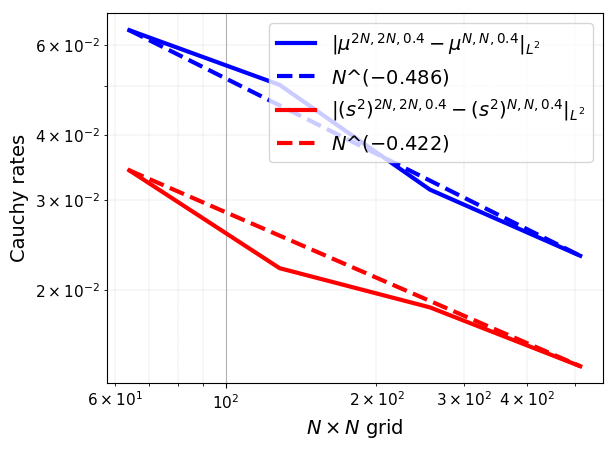}
	\end{subfigure} 
	
	\caption{Cauchy rates for mean and variance for fractional Brownian motions; left to right, $H=0.75$, $H=0.5$, $H=0.15$, at time $t=1$.}
	\label{fig:fbmmeanvar}
\end{figure}

\begin{figure}
	\begin{subfigure}{0.31\textwidth}
		\centering
		\includegraphics[width=\linewidth]{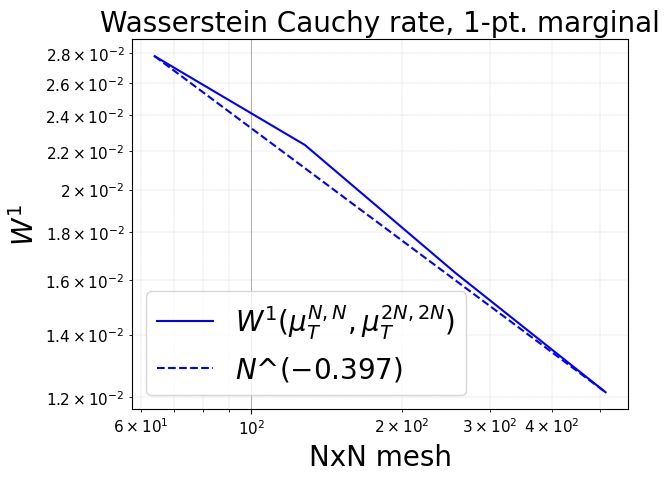}
	\end{subfigure} %
	\begin{subfigure}{0.31\textwidth}
		\centering
		\includegraphics[width=\linewidth]{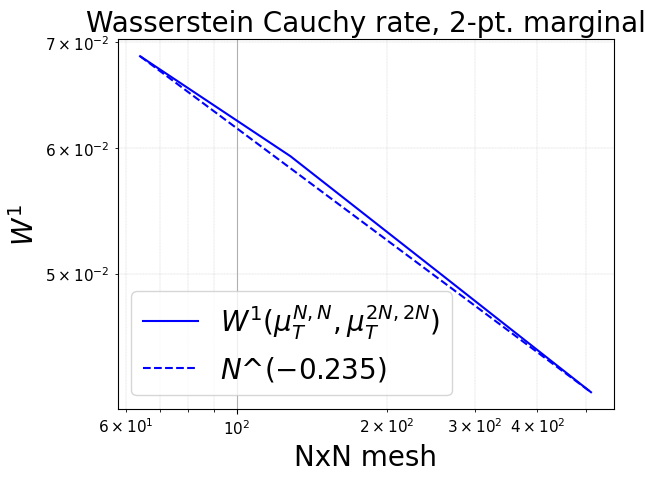}
	\end{subfigure} %
	\begin{subfigure}{0.31\textwidth}
		\centering
		\includegraphics[width=\linewidth]{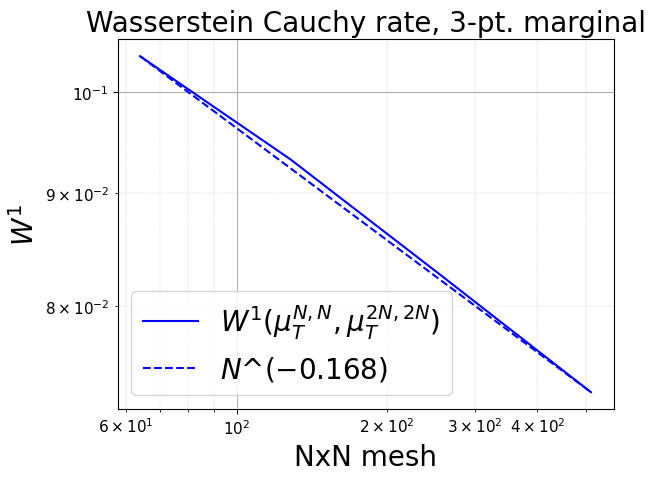}
	\end{subfigure} 
	
	\begin{subfigure}{0.31\textwidth}
		\centering
		\includegraphics[width=\linewidth]{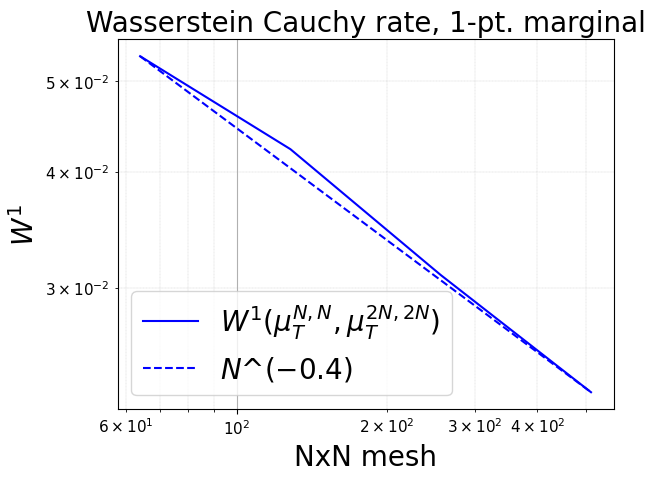}
	\end{subfigure} %
	\begin{subfigure}{0.31\textwidth}
		\centering
		\includegraphics[width=\linewidth]{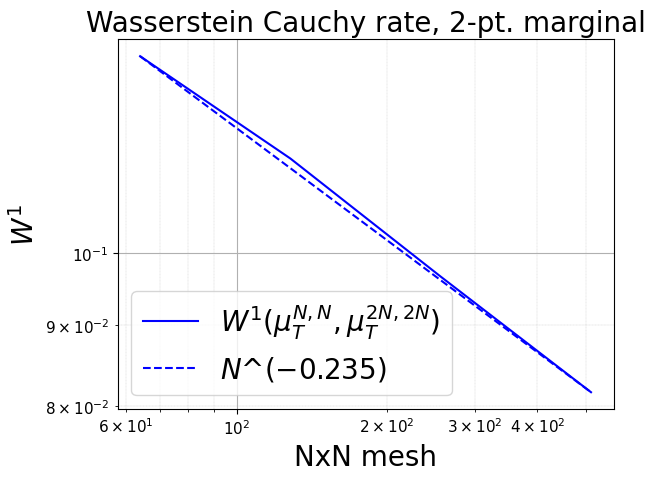}
	\end{subfigure} %
	\begin{subfigure}{0.31\textwidth}
		\centering
		\includegraphics[width=\linewidth]{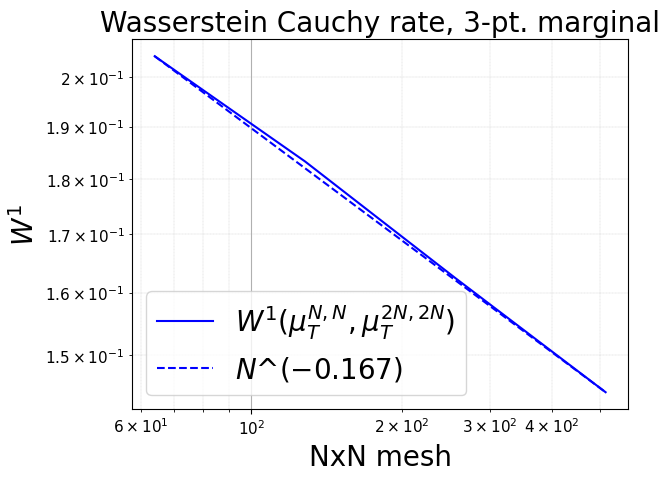}
	\end{subfigure} 
	
	\begin{subfigure}{0.31\textwidth}
		\centering
		\includegraphics[width=\linewidth]{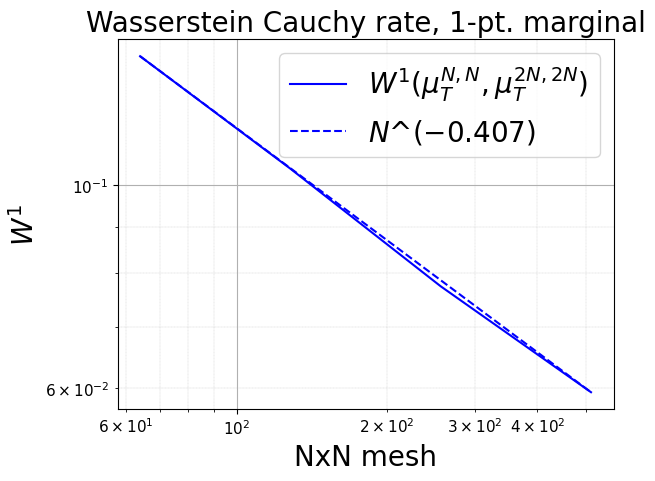}
	\end{subfigure} %
	\begin{subfigure}{0.31\textwidth}
		\centering
		\includegraphics[width=\linewidth]{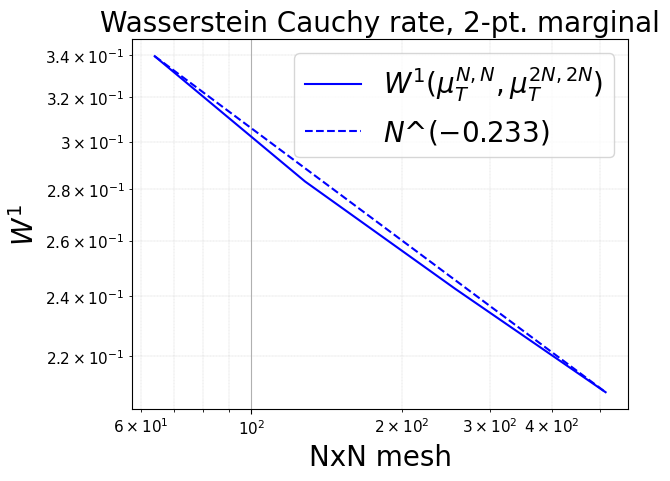}
	\end{subfigure} %
	\begin{subfigure}{0.31\textwidth}
		\centering
		\includegraphics[width=\linewidth]{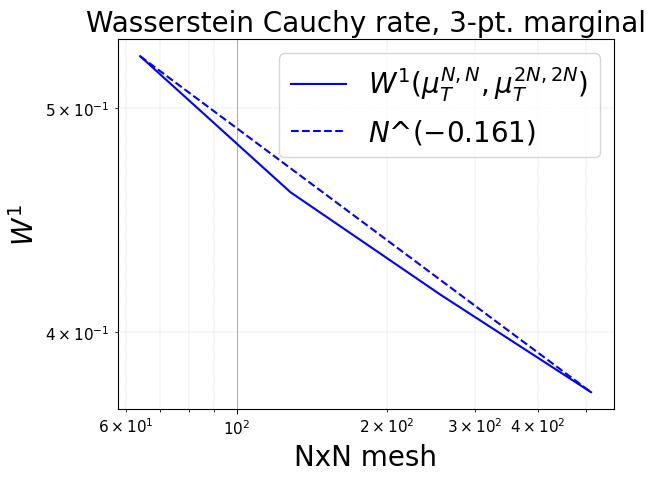}
	\end{subfigure}
	
	\caption{Cauchy rates for Wasserstein distances for the velocity vector field, (left to right) 1-, 2- and 3-point marginals at $t=1$. Top to bottom,  $H=0.75$, $H=0.5$, $H=0.15$.}
	\label{fig:fbmwass}
\end{figure}

\paragraph{A note on boundary conditions} The implementation of a finite volume solver on a Cartesian mesh for the incompressible fluid dynamics equations with arbitrary boundary conditions is non-trivial; see \cite{BCG89} and references therein; \cite{Gresho87} present an overview of classic techniques for the implementation of non-periodic boundary conditions. This treatment of the boundary terms would introduce a technical complication of the theory developed above. We remark, however, that the numerical solver of \cite{luqness} is able to reproduce some non-periodic boundary conditions. The following result is omitted for brevity, but in \cite{CPPThesis}, Section 6.3, the author presents an application of Algorithm \ref{algo:fkmt} to flow along a channel with ``no-flow'' (homogeneous Neumann) lids. The numerical results obtained are fully consistent with the theory observed, supporting our claim that periodicity of the boundary conditions is a technical assumption, as well as presenting, to our knowledge, the first instance in the literature of an approximate statistical solution of the incompressible Euler equations on a non-periodic domain.

\subsection{Stability}
In this paper, we have discussed only the version of Algorithm \ref{algo:fkmt} which uses the finite volume scheme \eqref{eq:scheme1}-\eqref{eq:scheme2} as an underlying method. In similar circumstances, e.g. \cite{Chen2012}, the authors express concern that different numerical solvers may converge to different (in their case, weak) solutions.

We believe that the framework of statistical solutions makes Algorithm \ref{algo:fkmt} robust with respect to the choice of the underlying solver. To this effect, we have run identical simulations with the following numerical schemes: the hyperviscosity spectral scheme of \cite{LMP1}, with the implementation in \cite{sphinx}; the high-order ENO finite difference scheme of \cite{MPP21}, and the high-order finite difference central scheme of \cite{Morinishi98}. For all of them, we observe similar results, and in fact the Wasserstein distance between the marginals computed with different schemes appears to tend to zero as the resolution is increased. See Fig. \ref{fig:stability} for some examples for the discontinuous shear layer; this is chosen for brevity as other examples display identical results.

\begin{figure}	
	\centering
	\begin{subfigure}{.35\textwidth}
		\centering
		\includegraphics[width=\linewidth]{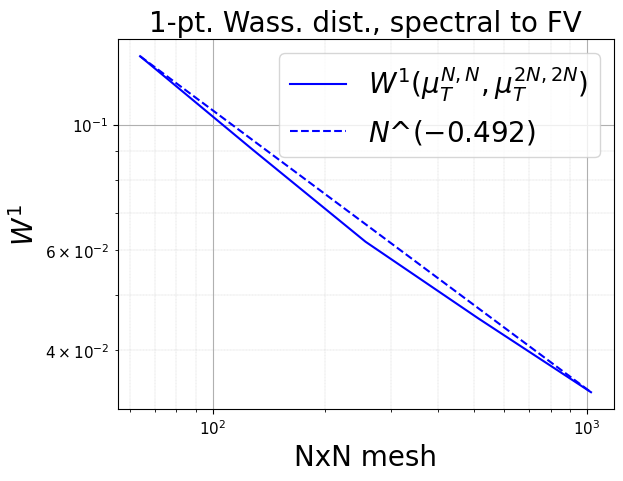}
	\end{subfigure} %
	\begin{subfigure}{.35\textwidth}
		\centering
		\includegraphics[width=\linewidth]{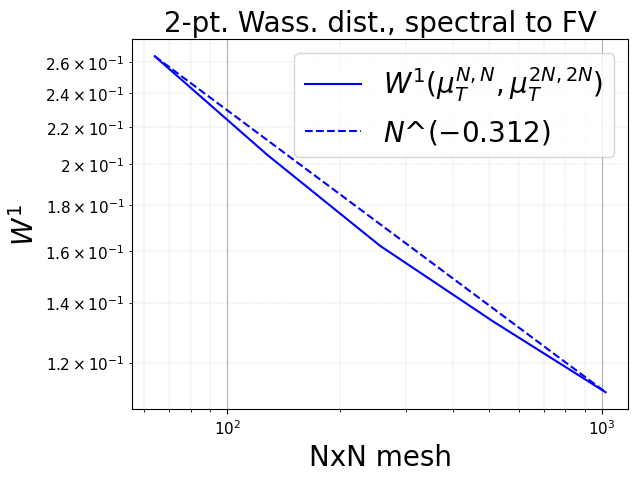}
	\end{subfigure}
	\vspace{0.5em}

	\begin{subfigure}{.35\textwidth}
		\centering
		\includegraphics[width=\linewidth]{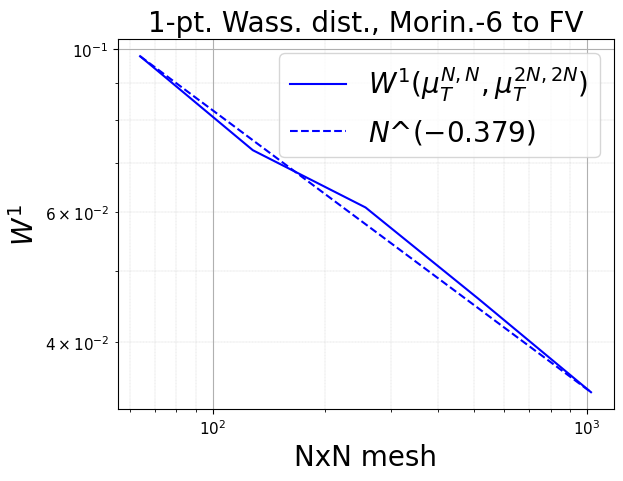}
	\end{subfigure} %
	\begin{subfigure}{.35\textwidth}
		\centering
		\includegraphics[width=\linewidth]{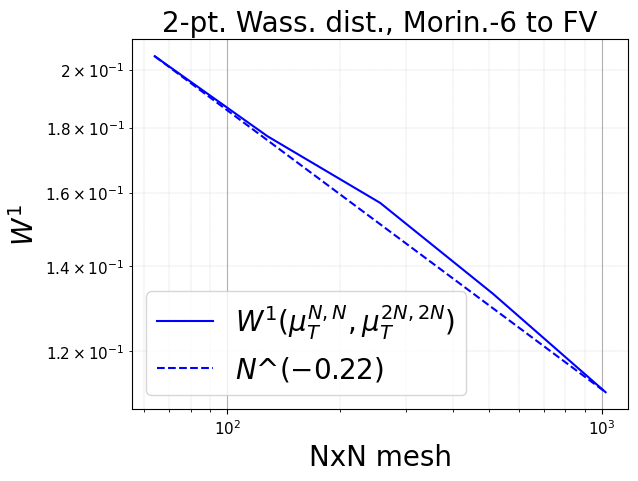}
	\end{subfigure}
	
	\caption{Wasserstein distance (left: one-point marginals; right: two-point marginals) between finite volume and hyperviscosity spectral solver of \cite{LMP1}, top; and between finite volume and 6th order scheme of \cite{Morinishi98}, bottom. Discontinuous vortex shear layer. Distance is measured at the same resolution and number of samples for both solvers, $64\times 64$ to $1024 \times 1024$.}
	\label{fig:stability}
\end{figure}

Furthermore, we also remark that for the case of deterministic initial data, we have suggested an arbitrary perturbation. Numerical experiments suggest that the choice of perturbation is, as the magnitude goes to zero, of little practical consequence.

\section{Conclusions and future work}
In this work we have presented an algorithm for the efficient computation of statistical solutions, rigorously proven its convergence, and displayed its practical use.

As stated in Theorem \ref{thm:relativeconv}, this convergence is only up to a subsequence, and conditioned to an external \emph{scaling assumption} on the structure function. However, in section \ref{sec:examples}, we have shown some numerical experiments that give credibility to the claim that this assumption is relatively mild, and in two dimensions it holds for every numerical experiment considered.

Furthermore, we find that the results obtained are fully consistent with those found in the literature, e.g. \cite{LMP1} for spectral methods. This supports the idea that a Monte Carlo algorithm, presented here as \ref{algo:fkmt}, can be non-intrusively applied over any standard scheme in the literature. 

In particular, the crucial novel result of this work is that a finite volume scheme can be used for this goal. To the knowledge of the author, the only analogous previously existing results are those of \cite{LMP1}, for spectral schemes. These, although highly efficient, are suitable only for a very limited selection of problems, in simple domains with periodic boundary conditions. Although the results presented here are derived in the torus, this is a technical assumption for convenience; there is every reason to believe they hold analogously for other types of periodic boundary conditions. In fact, a numerical example with homogeneous Neumann boundary conditions was presented by the author in \cite{CPPThesis}, where in particular the scaling assumption holds as well.

Algorithm \ref{algo:fkmt} presented here is based on a relatively coarse Monte Carlo integration. Techniques such as multi-level Monte Carlo methods, quasi Monte Carlo, etc. could be applied to accelerate the convergence and reduce the amount of high-resolution samples needed. Some attempts exist in the literature at applying these, e.g. \cite{LeonardiPhd}, \cite{LyePhd}, but their success has been limited. Analogously, in \cite{CPPThesis} we study the possible application of high-order schemes, in space and time; we find that these do not appear to produce significant gains.

This strongly suggests that, if the rate of the convergence is to be improved, the bottleneck is in the Monte Carlo sampling. In fact, we have only presented two-dimensional numerical experiments here. The theory, as derived here, holds for higher dimensions as well; the limitation to two-dimensional problems is due to computational constraints. There is presently ongoing work, \cite{RohnerMsc}, to develop an efficient, highly-parallelizable three-dimensional solver to carry out the corresponding experiments.

Nonetheless, we believe that this work extends the state of the art and provides consistent evidence that a Monte Carlo scheme can be combined with any well-known numerical method for the incompressible Euler equations to produce statistical solutions. Similar conclusions have been reached in e.g. \cite{FLMWSystems} for systems of conservation laws (including the compressible Euler equations), furthering the credibility that statistical solutions, approximated through Monte Carlo integration, are a satisfactory technique for the solution of partial differential equations, both as a sound theoretical framework and as a practically tractable approach.

\bibliographystyle{agsm}
\bibliography{bib_thesis}

\end{document}